
\documentclass[11pt]{amsart}
\usepackage{amscd,amssymb,euscript,hyperref}

\usepackage[OT2, T1]{fontenc}

\textwidth=17truecm
\hoffset=-2.5cm

\parindent=0pt
\parskip=4pt

\renewcommand{\phi}{\varphi}
\newcommand\A{\mathbb A}
\renewcommand\P{\mathbb P}

\newcommand\gm[1]{\mathbb{G}_{\mathrm{m},#1}}
\newcommand\gmR{\gm{R}}
\newcommand\gmW{\gm{W}}

\DeclareMathOperator\Ru{R_u}
\DeclareMathOperator\Glue{{Glue}}

\DeclareMathOperator\Pic{{Pic}}
\DeclareMathOperator\Spec{{Spec}}

\DeclareMathOperator{\Frac}{Frac}
\DeclareMathOperator{\ad}{ad}

\DeclareMathOperator{\Hom}{Hom}

\DeclareMathOperator{\GL}{GL}

\DeclareMathOperator{\gl}{\mathfrak{gl}}
\DeclareMathOperator{\SL}{SL}

\DeclareMathOperator{\Aut}{Aut}
\DeclareMathOperator{\Iso}{Iso}

\DeclareMathOperator{\Res}{Res}

\newcommand{\fm}{\mathfrak m}

\renewcommand{\sc}{\mathrm{sc}}

\newcounter{noindnum}[subsection]
\setcounter{noindnum}{0}
\renewcommand{\thenoindnum}{\alph{noindnum}}
\newcommand{\noindstep}{\refstepcounter{noindnum}{\rm(}\thenoindnum\/{\rm)} }
\newcommand{\stepzero}{\setcounter{noindnum}{0}
}
\theoremstyle{plain}
\newtheorem{theorem}{Theorem}
\newtheorem{proposition}{Proposition}[section]
\newtheorem{lemma}[proposition]{Lemma}
\newtheorem{corollary}[theorem]{Corollary}

\theoremstyle{definition}
\newtheorem{definition}[proposition]{Definition}

\theoremstyle{remark}
\newtheorem{remark}[proposition]{Remark}
\newtheorem*{remark*}{Remark}
\newtheorem{remarks}[proposition]{Remarks}
\newtheorem*{remarks*}{Remarks}

\newtheorem*{conventions*}{Conventions}

\newcommand\fp{{\mathfrak{p}}}

\newcommand\cB{{\mathcal{B}}}

\newcommand\cE{{\mathcal{E}}}
\newcommand\cF{{\mathcal{F}}}
\newcommand\cG{{\mathcal{G}}}
\newcommand\cH{{\mathcal{H}}}

\newcommand\cO{{\mathcal{O}}}

\newcommand\cT{{\mathcal{T}}}

\newcommand\cV{{\mathcal{V}}}
\newcommand\cP{{\mathcal{P}}}

\newcommand\bA{{\mathbf A}}
\newcommand\bB{{\mathbf B}}

\newcommand\bT{{\mathbf T}}
\newcommand\bG{{\mathbf G}}
\newcommand\bP{{\mathbf P}}

\newcommand\bH{{\mathbf H}}

\newcommand\bU{{\mathbf U}}

\newcommand\bZ{{\mathbf Z}}

\newcommand\bx{{\mathbf x}}

\newcommand\Z{\mathbb Z}

\title[Grothendieck--Serre for isotropic group schemes]{Unramified Grothendieck--Serre for simply-connected group schemes satisfying an isotropy condition via unipotent chains}
\author{Roman Fedorov}
\email{fedorov@pitt.edu}
\address{University of Pittsburgh, Pittsburgh, PA}
\address{Max Planck Institute for Mathematics, Bonn, Germany}

\begin{document}

\begin{abstract}
We prove a case of the Grothendieck--Serre conjecture: let $R$ be a Noetherian semilocal flat algebra over a Dedekind domain such that all fibers of $R$ are geometrically regular; let $\mathbf G$ be a~simply-connected reductive $R$-group scheme having a strictly proper parabolic subgroup scheme. Then a~$\mathbf G$-torsor over $R$ is trivial, provided that it is trivial over the total ring of fractions of $R$. We also simplify the proof of the conjecture in the quasi-split unramified case. The argument is based on the~notion of a~unipotent chain of torsors that we introduce. We also prove that if $R$ is a Noetherian normal domain and $\mathbf G$ is as above, then for any generically trivial torsor over an open subset $U$ of $\mathop{\mathrm{Spec}}R$, there is a closed $Z\subset\mathop{\mathrm{Spec}}R$ of codimension at least two such the torsor trivializes over every affine scheme that factors through $U-Z$.
\end{abstract}

\maketitle

\section{Introduction and Main results}
\subsection{} Let $R$ be a regular local ring; let $\bG$ be a reductive group scheme over $R$. A~conjecture of Grothendieck and Serre (see~\cite[remarque, p.31]{SerreFibres}, \cite[remarque~3, p.26-27]{GrothendieckTorsion}, and~\cite[remarque~1.11.a]{GrothendieckBrauer2}) predicts that a $\bG$-torsor over $R$ is trivial, if it is trivial over the fraction field of $R$. Recently this has been proved in the case when $R$ contains an infinite field in~\cite{FedorovPanin}, it was extended to the case of finite fields in~\cite{PaninFiniteFieldsIzvestiya}. In the mixed characteristic case the conjecture was previously known if $R$ is unramified (that is, the fibers of the projection $\Spec R\to\Spec\Z$ are regular) and $\bG$ is quasi-split, see~\cite{CesnaviciusGrSerre}. Assuming that the group scheme is simply-connected (so, necessarily semisimple), we generalize this result from the quasi-split case to the case of group schemes satisfying an isotropy condition. We also reprove the aforementioned result of~\cite{CesnaviciusGrSerre} (see Remark~\ref{rem:rem2}\eqref{rem:Ces}). Finally, we show that, for $\bG$ as above and any normal Noetherian domain $R$, every generically trivial $\bG$-torsor is ``almost trivial'' away from codimension at least two; see Section~\ref{sect:AlmostTrivial}.

Following~\cite{GilleStavrovaRequivalence}, we make the following definition (cf.~Remark~\ref{rem:rem}\eqref{rem:Total}).
\begin{definition}\label{def:StrictlyProper}
  A parabolic subgroup scheme $\bP$ of a reductive group scheme $\bG$ is called \emph{strictly proper,\/} if it intersects nontrivially every normal semisimple subgroup scheme of $\bG$.
\end{definition}

We prove the following theorem, which is a generalization of~\cite[Thm.~9.1]{CesnaviciusGrSerre} in the simply-connected case.
\begin{theorem}\label{th:Main}
Let $R$ be a Noetherian semilocal flat $\cO$-algebra, where $\cO$ is a semilocal Dedekind domain. Assume that all the fibers of $\Spec R\to\Spec\cO$ are geometrically regular. Let $\bG$ be a simply-connected reductive $R$-group scheme having a strictly proper parabolic $R$-subgroup scheme. Let $\cE$ be a $\bG$-torsor over~$R$. If $\cE$ is trivial over the total ring of fractions of $R$, then $\cE$ is trivial.
\end{theorem}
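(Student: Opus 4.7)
The strategy is to exploit the strictly proper parabolic $\bP\subset\bG$, its unipotent radical $\bU=\Ru(\bP)$, and an opposite parabolic $\bP^-$ with unipotent radical $\bU^-$. Since $\bG$ is simply-connected and $\bP$ meets every normal semisimple factor, the ``elementary subgroup'' of $\bG(R)$ generated by $\bU(R)$, $\bU^-(R)$, and their conjugates is large, in the spirit of Kneser--Tits-type results of Gille--Stavrova. The plan is to translate the assertion ``$\cE$ is trivial'' into a chaining statement: construct a finite sequence of $\bG$-torsors starting from the trivial torsor and ending at $\cE$, in which consecutive torsors differ by a modification along a (smooth) unipotent subgroup scheme. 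Since torsors under iterated extensions of $\ga$'s over semilocal regular rings are trivial, every link in such a chain preserves the isomorphism class, so exhibiting the chain will prove the theorem.

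To set up the chain, I would first use an Artin/Panin--\v{C}esnavi\v{c}ius-type geometric presentation: realize $\Spec R$ as the localization of a smooth relative curve $\cX\to\Spec\cO$ at finitely many closed points, extend $\cE$ to a torsor on a neighborhood in $\cX$, and arrange that $\cE$ is trivial away from a divisor $D\subset\cX$ finite over $\cO$. I would then formalize the notion of a \emph{unipotent chain of torsors}---a finite sequence of $\bG$-torsors, each link given by a twist by a section of a smooth unipotent $R$-subgroup scheme conjugate to $\bU$ or $\bU^-$---and check link-invariance over $\Spec R$ using the vanishing of higher unipotent cohomology on semilocal regular schemes. The central step is then to show that the generic trivialization of $\cE$ is connected to a global trivialization by such unipotent modifications: one decomposes the transition element, which lies in $\bG$ over the total ring of fractions, as a product of elements from conjugates of $\bU$ and $\bU^-$---the strictly proper parabolic is what makes such a decomposition possible on each normal factor of $\bG$---and interprets each factor as one link of the chain after extending suitably from the generic fibre to $\cX$.

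The main obstacle, I expect, is precisely this decomposition step outside the quasisplit case. Without a global Bruhat big cell one cannot immediately write transition elements as products from $\bU^-\cdot\bP$; instead one must cover the relevant locus by finitely many translates of the big cell under $\bG(R)$-elements, and then argue that the resulting unipotent factors extend from the generic point to $\cX$ in a controlled way, with each extension giving rise to one chain link. The simple-connectedness hypothesis is crucial here, since it underlies the generation statement by conjugates of $\bU$ and $\bU^-$ on which the decomposition rests. A secondary difficulty will be the mixed-characteristic coordination: fiberwise arguments over the closed points of $\Spec\cO$ of positive characteristic must be patched coherently with arguments over $\Frac\cO$, which is where the geometric-regularity assumption on the fibers of $\Spec R\to\Spec\cO$ enters, ensuring the geometric presentation and the vanishing of unipotent cohomology persist uniformly. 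Once the chain is constructed, the conclusion that $\cE$ is trivial is immediate from link-invariance.
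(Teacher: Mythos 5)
Your proposal correctly identifies the paper's central novel device---the unipotent chain of torsors---and correctly intuits that the strictly proper parabolic, its opposite, and the simple-connectedness hypothesis are what make the decomposition of the generic transition element into unipotent factors possible (this is essentially Proposition~\ref{pr:Away2Div} and the proof of Theorem~\ref{th:Chain}). However, there is a genuine gap in how you propose to finish.

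You claim that because torsors under iterated extensions of $\ga$'s are trivial over semilocal regular rings, ``every link preserves the isomorphism class,'' so exhibiting the chain immediately gives triviality. That inference is only valid when the chain lives over an \emph{affine} base (this is exactly Lemma~\ref{lm:ChainTrivAff} in the paper). But the chain one can actually construct is over $\Spec R - T$ with $T$ of codimension two---and for $\dim R \geq 2$ this open subscheme is not affine, so the automatic-triviality argument does not apply. Indeed, if you could construct the chain over all of $\Spec R$ you would already have proved the theorem with no further work; the obstruction $T$ is unavoidable. Consequently, the real technical content of the proof---Theorem~\ref{th:Main2}, consisting of fibering $X$ into curves over a base of dimension $\dim X - 1$, equating the group scheme to a constant one along an \'etale cover, descending the torsor and the chain along an elementary distinguished square to $\A^1_R$, and then invoking the $\A^1$-triviality Theorem~\ref{th:A1}---is what your outline glosses over. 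Your proposed geometric presentation is also too rigid: realizing $\Spec R$ as the localization of a smooth relative curve over $\cO$ forces $\dim R \leq 2$; in higher dimension one must first take $X$ of arbitrary relative dimension over $\cO$, locate $T$ of codimension two with the chain trivial away from a subset fiberwise of positive codimension, and only then produce an $R$-relative curve $C \to \Spec R$ (not a curve over $\cO$) along which one descends. Finally, you do not address the reduction from general simply-connected $\bG$ to the simple case via the Weil-restriction decomposition, which the paper handles in Lemma~\ref{lm:TotallyIsotropic}.
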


The theorem will be derived from Theorems~\ref{th:Chain} and~\ref{th:Main2} in Section~\ref{sect:Chains}.

\begin{remarks}\label{rem:rem}
\stepzero\noindstep\label{rem:regular} In the situation of Theorem~\ref{th:Main}, $R$ is automatically regular. Indeed, Popescu's Theorem~\cite[Tag07GC]{StacksProject} shows that $R$ is a filtered colimit of smooth finitely generated $\cO$-algebras, which are regular by~\cite[Tag07NF]{StacksProject}.

\noindstep\label{rem:Total} If $R$ is any local ring, then a reductive $R$-group scheme $\bG$ has a strictly proper parabolic subgroup scheme if and only if it is totally isotropic in the terminology of~\cite{CesnaviciusProblems} (or strongly locally isotropic in the terminology of~\cite{FedorovGrSerreNonSC}). If $R$ is only semi-local, one can imagine that a totally isotropic $\bG$ has a strictly proper parabolic $R$-subgroup scheme locally over $R$ but not globally. I expect that this cannot actually happen if $R$ is regular.

\noindstep In~\cite[Prop.~5.1]{FedorovCesnavicius} a stronger result is proved: under the same assumption on $R$, the Grothendieck--Serre conjecture holds for all totally isotropic reductive $R$-group schemes. The argument of~\cite{FedorovCesnavicius} is completely different. It is unlikely that Theorem~\ref{th:AlmostTrivial} below can be proved in full generality by the methods of~\cite{FedorovCesnavicius}.
\end{remarks}

Let $(R,\fm)$ be a regular local ring. Recall that $R$ is \emph{unramified\/} if $R/pR$ is regular, where $p$ is the characteristic of the residue field $R/\fm$. It follows from Theorem~\ref{th:Main} that the Grothendieck--Serre conjecture holds for simply-connected, totally isotropic reductive group schemes over unramified regular local rings:

\begin{corollary}\label{cor:GrSerre}
    Let $R$ be an unramified regular local ring and let $\bG$ be a simply-connected totally isotropic reductive $R$-group scheme. Then a $\bG$-torsor over $R$ is trivial, provided that it is trivial over the fraction field of $R$.
\end{corollary}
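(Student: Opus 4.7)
The plan is to reduce the corollary directly to Theorem~\ref{th:Main} by exhibiting, for the given ring $R$, a~suitable semilocal Dedekind subring $\cO\subset R$ over which $R$ becomes a~flat algebra with geometrically regular fibers. Since $R$ is a~local domain, its total ring of fractions equals its fraction field, so the conclusions match; and by Remark~\ref{rem:rem}\eqref{rem:Total}, the ``totally isotropic'' hypothesis on $\bG$ is exactly the existence of a~strictly proper parabolic subgroup scheme, so the group-theoretic hypotheses also agree.

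I would split into two cases. In the \emph{equal characteristic} case, let $p\geq 0$ be the characteristic of $R$ and take $\cO$ to be the prime field of $R$, namely $\Q$ or $\mathbb{F}_p$. Then $\cO$ is a~field, hence a~semilocal Dedekind domain, $R$ is automatically flat over it, and the unique fiber is $R$ itself, which is regular. Since $\cO$ is perfect, for any finite extension $k'/\cO$ the ring $R\otimes_\cO k'$ is \'etale over $R$ and hence regular, so the fiber is geometrically regular.

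In the \emph{mixed characteristic} case the residue field has characteristic $p>0$ while $p\neq 0$ in $R$. Take $\cO=\Z_{(p)}$, a~discrete valuation ring. Flatness of $R$ over $\cO$ follows from $R$ being a~domain with $p\neq 0$. The generic fiber is the localization $R[1/p]$, which is regular and geometrically regular over the perfect field $\Q$; the special fiber is $R/pR$, which is regular by the unramifiedness hypothesis and geometrically regular over~$\mathbb{F}_p$ by the same perfect-base-field argument as above. In either case Theorem~\ref{th:Main} applies and yields the corollary.

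I do not anticipate a~serious obstacle: the corollary is a~routine specialization of Theorem~\ref{th:Main}. The only point that requires even a~moment's thought is the geometric regularity of the special fiber in mixed characteristic, and that is precisely where the hypothesis that $R$ is unramified enters.
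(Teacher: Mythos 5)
Your proposal is correct, and the mixed characteristic branch is essentially identical to the paper's own argument. The only place you diverge is in equal characteristic: the paper simply observes that if $R$ contains a field, the result is the known Grothendieck--Serre theorem of Fedorov--Panin and Panin, and cites \cite{FedorovPanin,PaninFiniteFieldsIzvestiya}; you instead fold that case back into Theorem~\ref{th:Main} by taking $\cO$ to be the prime field $\Q$ or $\mathbb{F}_p$, noting that a field is a (semilocal) Dedekind domain and that regularity over a perfect field is automatically geometric regularity. Your route is a little more uniform and self-contained, showing that Theorem~\ref{th:Main} by itself already subsumes the equicharacteristic case for the class of group schemes at hand, whereas the paper's citation is broader (those references cover all reductive $\bG$, not only simply-connected totally isotropic ones) and avoids the minor pedantry of checking that a field counts as a Dedekind domain and that the zero-dimensional base is handled in the proof of Theorem~\ref{th:Main}. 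Both are valid; one small imprecision on your side is that you write ``$p\geq 0$ the characteristic of $R$'' where you mean the characteristic of the ring $R$ itself (so that $R$ contains a field precisely when this agrees with the residue characteristic), but this does not affect the argument.
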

\begin{proof}
  If $R$ contains a field, then it coincides with its fiber $R/pR$, and the statement follows from~\cite{FedorovPanin,PaninFiniteFieldsIzvestiya}. Otherwise, $R$ is a flat $\cO$-algebra, where $\cO=\Z_{p\Z}$. The special fiber of the projection $\Spec R\to\Spec\cO$ is equal to $R/pR$; by definition of unramifiedness it is regular, thus it is geometrically regular over $\Z/p\Z$ because $\Z/p\Z$ is perfect. Since $R$ is regular, the generic fiber of $\Spec R\to\Spec\cO$ is regular (and thus geometrically regular). Hence we can apply Theorem~\ref{th:Main}.
\end{proof}

\subsection{``Almost trivial'' torsors}\label{sect:AlmostTrivial} We now explain the main idea of our argument (and, in fact, of most proofs of the cases of the Grothendieck--Serre conjecture in mixed characteristic). Using Popescu's Theorem, we may assume that $\bG$ and its generically trivial torsor $\cE$ are defined over an affine scheme $X$ that is smooth over a Dedekind domain $\cO$. We assume that $X$ is of positive dimension as $\cE$ is trivial otherwise. The $X$-torsor $\cE$, being generically trivial, is trivial away from a closed subset $T\subset X$ of codimension at least one. In the case when $\cO$ is a field (that is, in the equal characteristic case), this allows us to fiber $X$ (after shrinking it) into smooth curves over an open subset of an affine space in such a way that $T$ is finite over the base. As explained below in Section~\ref{step:Difficulty}, the main problem in mixed characteristic is that we ``loose one dimension,'' so we would need $T$ to be of codimension at least two. This is, however, impossible: by Hartogs principle the trivialization over $X-T$ would extend to $X$. However, we can find a closed subset $T$ of codimension at least two in $X$ such that $\cE$ is ``almost trivial'' over $X-T$ in the following sense:

\begin{theorem}\label{th:AlmostTrivial}
Let $R$ be a normal Noetherian domain and $\bG$ be a simply-connected reductive $R$-group scheme having a strictly proper parabolic $R$-subgroup scheme. Let $\cE$ be a generically trivial $\bG$-torsor over an open subscheme $U\subset\Spec R$. Then there is a closed subset $T\subset\Spec R$ of codimension at least two such that for any morphism $\Spec A\to U-T$ the pullback of $\cE$ to $\Spec A$ is trivial.
\end{theorem}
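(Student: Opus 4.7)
The plan is to combine Theorem~\ref{th:Main}, applied at each codimension-one point of $U$, with a spreading argument and the unipotent-chain technology foreshadowed in the introduction in order to handle the ``any affine pullback'' condition.

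First, for each codimension-one prime $\fp \in U$, the local ring $R_\fp$ is a discrete valuation ring, and Theorem~\ref{th:Main} applies with $\cO = R_\fp$: the fibers of the identity $\Spec R_\fp \to \Spec\cO$ are the residue fields, hence automatically geometrically regular. The pullback $\cE|_{\Spec R_\fp}$ is generically trivial (since $\Frac R_\fp = \Frac R$) and $\bG_{R_\fp}$ inherits the simply-connected, strictly-proper-parabolic hypotheses, so Theorem~\ref{th:Main} yields a section $s_\fp : \Spec R_\fp \to \cE$.

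Next, since $\bG$ is affine of finite type over the Noetherian ring $R$, the torsor $\cE$ is of finite presentation over $U$, so each $s_\fp$ spreads to a section over a Zariski open $V_\fp \subseteq U$ containing $\fp$. I would set $U_0 := \bigcup_\fp V_\fp$, the union running over codimension-one points of $U$; then $U_0$ is open in $U$, contains every codimension-one point of $U$, and $\cE|_{U_0}$ is Zariski-locally trivial. I would then let $T \subseteq \Spec R$ be the closure of $U \setminus U_0$. Because $U \setminus U_0$ is closed in $U$ and contains no codimension-one points of $U$, each of its irreducible components has codimension $\geq 2$ in $\Spec R$; taking closures of irreducible subsets preserves the generic point, hence the height, so $T$ has codimension $\geq 2$ in $\Spec R$ and $U \setminus T = U_0$.

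The hard part is then to upgrade Zariski-local triviality on $U_0$ to triviality after pullback along \emph{any} morphism $\Spec A \to U_0$, which is strictly stronger. Here the plan is to invoke a \emph{unipotent chain of torsors} linking $\cE|_{U_0}$ to the trivial torsor: a finite sequence of torsor modifications in which every transition is governed by a smooth unipotent group scheme built from the unipotent radical $\Ru(\bP)$ of the strictly proper parabolic and its subquotients. The key fact is that for any smooth unipotent affine group scheme $\bU$ and any ring $A$, one has $H^1(\Spec A,\bU) = 0$, by d\'evissage to $\ga$, whose higher cohomology on an affine scheme vanishes by Serre. Consequently each link of the chain becomes an isomorphism of torsors after pullback to $\Spec A$, so $\cE_A$ is isomorphic to the trivial $\bG_A$-torsor, as required. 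Constructing such a chain, which I expect to rely on Theorem~\ref{th:Chain}, is the technical heart of the argument and the main obstacle.
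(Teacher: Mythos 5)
Your plan correctly identifies the two key mechanisms the paper uses — Theorem~\ref{th:Chain} as the source of the unipotent chain, and the vanishing of $H^1$ for unipotent group schemes over affine schemes (Lemma~\ref{lm:ChainTrivAff}) to kill each link after pullback — but the way you try to wire them together has a genuine gap, and the preamble (steps via Theorem~\ref{th:Main} at codimension-one points) is both misdirected and disconnected from the rest.

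Concretely: you build $U_0 \subset U$ on which $\cE$ is Zariski-locally trivial and define $T$ as the closure of $U - U_0$. But this $T$ has nothing to do with the $T$ produced by Theorem~\ref{th:Chain}, and there is no reason a unipotent chain connecting $\bG$ to $\cE$ should exist over $\Spec R - T$ for your choice of $T$. Zariski-local triviality of $\cE$ on $U_0$ is neither the hypothesis nor the output of Theorem~\ref{th:Chain}, and it does not interact with the chain construction. You explicitly acknowledge this when you say constructing the chain ``is the technical heart of the argument and the main obstacle'' — at that point the proof is not done: the whole point is that Theorem~\ref{th:Chain} produces $T$ \emph{and} the chain over $\Spec R - T$ simultaneously, so the correct move is to discard steps 1--3 entirely and let Theorem~\ref{th:Chain} supply $T$. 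Once you have the chain, the paper's argument is just Lemma~\ref{lm:ChainPullback} (pull back the chain along $\Spec A \to U - T$) followed by Lemma~\ref{lm:ChainTrivAff} (all torsors in a unipotent chain over an affine base are trivial), which is exactly the $H^1$-vanishing mechanism you describe.

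You also omit a necessary reduction: Theorem~\ref{th:Chain} is stated for $\bG$ \emph{simple}, whereas Theorem~\ref{th:AlmostTrivial} assumes only that $\bG$ is simply-connected (hence semisimple) with a strictly proper parabolic. The paper first writes $\bG$ as a product of Weil restrictions of simple group schemes and uses Lemma~\ref{lm:TotallyIsotropic} to see that the strictly proper parabolic hypothesis translates into each simple factor being isotropic; only then can Theorem~\ref{th:Chain} be applied factor by factor. Finally, invoking Theorem~\ref{th:Main} at each codimension-one local ring to get the sections $s_\fp$ is heavy machinery for what is just the well-known Dedekind/DVR case of Grothendieck--Serre (cited in the paper as \cite{Guo2019GrSerreDedekind}); it is not circular, but it is overkill and, as noted above, ultimately unused.
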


We emphasize that this theorem has very modest assumptions on $R$: it is only required to be normal and Noetherian. The author does not expect this statement to hold beyond the simply-connected isotropic case (e.g., even for tori). We also emphasize that $\cE$ is not required to be defined over $\Spec R$ but only over an open subset thereof.

\subsection{History and overview of the proof}\label{sect:Overview} Note that the Grothendieck--Serre conjecture was initially formulated for local rings but it has been generalized to semilocal rings; most of the papers cited deal with the semilocal case.

The conjecture was proved in the case when the semilocal ring contains an infinite field in 2012 by Fedorov and Panin who completed the work of many people; see~\cite{FedorovPanin} and the historical remarks therein. The conjecture was proved when the semilocal ring contains a finite field by Panin in 2014; see~\cite{PaninFiniteFieldsIzvestiya}. We briefly explain the strategy of the proof in the equicharacteristic case.

\noindstep\label{step:start} The first step is to use Popescu's Theorem to reduce to the case when $R$ is the semilocal ring of a finite set $\bx$ of closed points on an integral affine scheme $X$ smooth over a field $k$. By spreading out we may assume that the group scheme $\bG$ and the $\bG$-torsor $\cE$ are defined over $X$. We may also assume that~$\cE$ is trivial away from a proper closed subset $Y\subset X$.

\noindstep\label{step:fibering} The second step is to fiber an open neighborhood $X'\subset X$ of $\bx$ over an open subset $S\subset \A^{\dim X-1}_k$ in such a way that the fibers are smooth curves and $Y\cap X'$ is finite over $S$. In fact, any generic projection does the job. Formally, this is accomplished by compactifying $X$ and using Bertini's Theorem. As we will see momentarily, this is where the situation becomes drastically different in the mixed characteristic case. We note that the finite field case became approachable after finite field Bertini's Theorem became available (see~\cite{PoonenOnBertini}). It seems plausible that one can use results of~\cite{Gabber2001SpaceFilling} instead.

\noindstep\label{step:Descend} The third step is to replace $X'\to S$ with $C:=X'\times_S\Spec R\to\Spec R$ and $\cE$ with $\cE':=p_1^*\cE$. The original torsor $\cE$ is recovered as $\Delta^*\cE$, where $\Delta\colon\Spec R\to C$ is the diagonal section. The data $(C\to\Spec R, \Delta, \cE')$ can be improved without changing $\Delta^*\cE$ until $\cE'$ can be descended onto $\A^1_R$.

\noindstep\label{step:A1} The last step is to show that \emph{if $\cE'$ is a torsor over $\A^1_R$ trivial away from an $R$-finite subscheme, then $\cE'$ is trivial along every section of $\A^1_R\to\Spec R$}. We note that originally this statement was only available in the case when $\bG$ is simple and simply-connected, so one had to reduce to this case. One of the main results of~\cite{FedorovGrSerreNonSC} shows that the statement is true for all reductive group schemes $\bG$.

\subsubsection{}\label{step:Difficulty} Very little was known about the mixed characteristic case until~2015 (see~\cite{FedorovMixedChar} and the historical remarks in loc.~cit.) Here are the main ideas of loc.~cit. Assume that $R$ is unramified. Then the main difficulty is that the argument in~\eqref{step:fibering} fails. For example, if $\dim X=2$, then the fibers of the projection $X\to\Spec\Z_{p\Z}$ are already one-dimensional, so there is nothing to fiber. On the other hand, $Y$ may not be finite over $\Spec\Z_{p\Z}$, which is crucial for the following. In general, we ``loose'' one dimension because the projection $X\to\Spec\Z_{p\Z}$ cannot be deformed.

Here is the main idea of~\cite{FedorovMixedChar}: let $\bG$ be quasi-split with a Borel subgroup scheme $\bB$. Assume that $\dim X\ge2$. A generic trivialization of $\cE$ induces a generic reduction to $\bB$. Since $\bG/\bB$ is projective, this reduction can be extended to the complement of a subset of codimension two; call it $Z$. Now we recover the lost dimension: there is a smooth morphism $X'\to S$ similar to the above such that $Z\cap X'$ is $S$-finite (for example, if $\dim X=2$, then $Z$ is a scheme with finite underlying set, so it is automatically finite over $S$). One shows that this can be performed in such a way that there is a closed subscheme $Y\subset X'$, also $S$-finite, such that $Y\supset Z\cap X'$ and $X'-Y$ is affine. One then shows that $\cE$ can be reduced to the unipotent radical $\Ru\bB$ of $\bB$ on $X'-Y$, which shows that $\cE_{X'-Y}$ is trivial (because $X'-Y$ is affine). The rest of the proof is very similar to the equal characteristic case.

To make the above ideas work, it is required in~\cite{FedorovMixedChar} that $X$ has a projective compactification satisfying some technical conditions. It is also required that $\bG$ is split and only the local (rather than the semilocal) case is considered.

\subsubsection{} In~2020 \v{C}esnavi\v{c}ius (see~\cite{CesnaviciusGrSerre}) was able to get rid of the assumptions on the compactification, as well as to generalize to quasi-split group schemes and to the semilocal rings. Also, the proof was streamlined in loc.~cit., so we will generally follow it in this paper. One of the main ideas is that one need not choose $Y\subset X'$ as above at all. In fact, \v{C}esnavi\v{c}ius descends $\cE'$ to $\A^1_R$ by showing that $\Ru\bB$-torsors can be descended along certain affine morphisms.

\subsubsection{} We now briefly explain the argument in the paper.

\noindstep The first main idea of the current paper is to introduce a notion of a \emph{unipotent chain}, see Section~\ref{sect:Chains}. Roughly speaking, this is a sequence of torsors such that every torsor is obtained from the previous one via a unipotent modification. We prove (see Theorem~\ref{th:Chain}) that after reductions as in  Step~\eqref{step:start} of Section~\ref{sect:Overview}, one can find a closed subset $T$ of $X$ of codimension two (where we again assume that $\dim X\ge2$) such that $\cE_{X-T}$ is connected to the trivial torsor via a unipotent chain, provided that $\bG$ is simple, simply-connected, and isotropic. In this situation,~$\cE$ is trivial over every open affine subscheme of $X$ disjoint from $T$; see Theorem~\ref{th:AlmostTrivial} above and its proof.

Unfortunately, the above property of $\cE$ and $T$ is not enough to perform the descent as in Step~\eqref{step:Descend} of Section~\ref{sect:Overview}. To make this step work, we introduce a notion of a trivial unipotent chain with respect to a pair of opposite parabolic subgroup schemes of $\bG$; the above unipotent chain over $X-T$ is trivial away from a closed subset that is of positive codimension fiberwise over the Dedekind domain.

\noindstep Next, we prove the unramified case of the Grothendieck--Serre conjecture for any torsor that can be connected to a trivial torsor via a unipotent chain away from a subset of codimension at least two (there is also a local triviality condition for the chain). We follow the usual strategy. As in Step~\eqref{step:Descend} of Section~\ref{sect:Overview}, following essentially~\cite{FedorovMixedChar} and~\cite{CesnaviciusGrSerre}, one finds a smooth curve $C\to\Spec R$ with a section $\Delta$, a torsor $\cE'$ over~$C$ such that $\Delta^*\cE'\approx\cE$, and an $R$-finite closed subset $Z\subset C$ such that $\cE'_{C-Z}$ is connected to the trivial torsor via a unipotent chain. Moreover, we may choose the data so that the unipotent chain is trivial away from a closed subscheme quasi-finite over $R$.

\noindstep Initially, $\cE'$ is a torsor over a $C$-group scheme. However, replacing $C$ with an \'etale cover we reduce to the case when the group scheme is a pullback from $\Spec R$. This step, known as \emph{equating group schemes\/}, requires some additional work in our case (see Section~\ref{sect:Equating}). Since the notion of trivial unipotent chain depends on a pair of opposite parabolic subgroup schemes of $\bG$, we also need to make sure that this equating is compatible with parabolic subgroup schemes.

\noindstep Let the unipotent chain connecting $\cE'_{C-Z}$ to the trivial torsor be trivial away from a closed $R$-quasi-finite subset $Y\subset C$ containing $Z$. After improving our data, we get an \'etale morphism $\phi\colon C\to W\subset\A_R^1$ such that $Y$ maps isomorphically onto a closed subscheme $\phi(Y)\subset W$ such that the preimage of $\phi(Y)$ in $C$ is equal to $Y$. We descend $\cE'$ to a torsor $\cE_W$ over $W$. The triviality of the chain over $C-Y$ lets us descend the chain to a chain over $C-\phi(Z)$. In particular, $\cE_W$ is trivial away from every closed subscheme $Y'$ containing $\phi(Z)$ and having affine complement. Such a subscheme can be chosen to be $R$-finite. By patching $\cE_W$ with the trivial torsor over $\A^1_R-Y'$, we obtain a torsor over $\A^1_R$ trivial away from an $R$-finite closed subscheme.

\noindstep The last step is to generalize Step~\eqref{step:A1} of Section~\ref{sect:Overview} to the mixed characteristic case. This is accomplished by the following theorem of independent interest. We note that this theorem is closely related to the results of~\cite{PaninStavrovaA1}.

\begin{theorem}\label{th:A1}
Let $U$ be an affine semilocal scheme. Let~$\bG$ be a reductive group scheme over $U$. Assume that $Z$ is a closed subscheme of $\A^1_U$ finite over~$U$. Let~$\cE$ be a $\bG$-torsor over $\A^1_U$ trivial over $\A^1_U-Z$. Then for every section $\Delta\colon U\to\A^1_U$ of the projection $\A^1_U\to U$ the $\bG$-torsor $\Delta^*\cE$ is trivial.
\end{theorem}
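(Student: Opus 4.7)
My plan for Theorem~\ref{th:A1} is to reduce it to a Horrocks-type triviality statement for reductive group torsors on $\A^1$.

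\emph{Step 1 (Monic polynomial).} Write $U = \Spec R$ for the affine semilocal $R$, so $\A^1_U = \Spec R[t]$. Since $Z$ is $U$-finite, the structure sheaf $\cO_Z$ is a finitely generated $R$-module. Applying Cayley--Hamilton to multiplication by $t$ on $\cO_Z$ yields a monic polynomial $f \in R[t]$ with $V(f) \supset Z$, so $\cE$ is trivial on $\A^1_U \setminus V(f) = \Spec R[t, 1/f]$.

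\emph{Step 2 (Reduction to an affine Horrocks statement).} It suffices to prove that $\cE$ itself is trivial on $\A^1_U$, for then $\Delta^*\cE$ is trivial for every section $\Delta$. The theorem thus reduces to the following affine Horrocks-type claim: if $R$ is a semilocal ring, $\bG$ a reductive $R$-group scheme, $f \in R[t]$ monic, and $\cE$ a $\bG$-torsor on $\Spec R[t]$ trivial on $\Spec R[t, 1/f]$, then $\cE$ is trivial. This is a reductive-group analog of the classical Horrocks lemma for projective modules, in the spirit of \cite{PaninStavrovaA1}.

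\emph{Step 3 (Cocycle factorization).} To prove the affine Horrocks claim, I would use Beauville--Laszlo descent. Because $f$ is monic, the square relating $R[t]$, $R[t, 1/f]$, the $(f)$-adic completion $R[t]_{(f)}^\wedge$, and the localization $R[t]_{(f)}^\wedge[1/f]$ is a descent square for torsors under smooth affine group schemes. The torsor $\cE$ together with its trivialization on $\Spec R[t, 1/f]$ is then equivalent to a cocycle $g \in \bG(R[t]_{(f)}^\wedge[1/f])$, and triviality of $\cE$ amounts to a factorization $g = g_+ \cdot g_-$ with $g_+ \in \bG(R[t, 1/f])$ and $g_- \in \bG(R[t]_{(f)}^\wedge)$. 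Establishing such a factorization is a Cartan/affine-Bruhat-type decomposition in the loop group of $\bG$ over the complete local ring $R[t]_{(f)}^\wedge$.

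\emph{Main obstacle.} The cocycle factorization for an arbitrary reductive $\bG$ over a semilocal base is the technical heart of the argument. For $\bG = \GL_n$ it is classical (Horrocks / Quillen via elementary column operations). For general reductive $\bG$, in particular without a simply-connectedness assumption, one must exploit the full Bruhat--Tits structure of $\bG$ over $R[t]_{(f)}^\wedge$ and carefully manage the fundamental group of $\bG$ together with the (possibly finite) residue fields of $R$; this is precisely where the techniques of \cite{PaninStavrovaA1} enter.
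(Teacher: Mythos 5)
Your Step~2 reduction---``it suffices to prove that $\cE$ itself is trivial on $\A^1_U$''---is where the proposal diverges from what the paper proves and, more importantly, from what is actually true and accessible. Theorem~\ref{th:A1} is deliberately stated to conclude only that $\Delta^*\cE$ is trivial for every section $\Delta$, and the paper's proof never shows $\cE$ itself is trivial. In fact, the paper's proof reveals exactly why the stronger ``affine Horrocks'' statement is the wrong target: after extending $\cE$ to a torsor $\widetilde\cE$ over $\P^1_U$, the fiber $\widetilde\cE_u/\bZ_u$ over a closed point $u\in U$ is a priori only Zariski-locally trivial, and its topological type (a class controlled by $\pi_1(\bG^{\ad})$ over $\P^1_u$) is a genuine obstruction to triviality on $\A^1_u$ when $\bG$ is not simply connected. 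The paper kills this obstruction by pulling back along the $d$-th power map $\psi\colon\P^1_U\to\P^1_U$, $z\mapsto z^d$, where $d$ is the degree of the simply-connected central cover of $\bG^{\ad}$; since $\psi$ admits a section over $U\times 1$, triviality of $\psi^*\widetilde\cE$ on a neighborhood of $U\times 1$ suffices, and $\psi^*$ is exactly what makes each $\psi^*(\widetilde\cE_u/\bZ_u)$ topologically trivial so that Theorem~\ref{th:MainThm2} can be invoked. Your sketch has no counterpart to this twist, and ``carefully managing the fundamental group'' inside a Bruhat--Tits/loop-group factorization is not a gap-filler: it is precisely the obstruction that makes a direct cocycle factorization for non-simply-connected $\bG$ fail.

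There is a second substantive gap. Even granting topological triviality, the paper does not obtain its factorization from an affine-Bruhat decomposition over the complete local ring $R[t]_{(f)}^\wedge$. It instead compactifies to $\P^1_U$, and uses a Bertini argument (Lemma~\ref{lm:isotropic}, exploiting the projectivity of the scheme of parabolics of each Weil-restriction factor $\overline{\bG}^i$) to produce auxiliary finite \'etale $U$-schemes $Y^i$ over which $\overline{\bG}^i$ becomes isotropic and which land in $\A^1_U$ disjoint from $\psi^{-1}(Z)$ and the section. Theorem~\ref{th:MainThm2}, applied with $Y=(U\times 0)\sqcup\bigsqcup_i Y^i$, then gives triviality of $\psi^*\widetilde\cE$ over $\P^1_U-Y$, which contains $U\times 1$. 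Note that Theorem~\ref{th:A1} assumes nothing about isotropy of $\bG$: the ability to restore isotropy after a controlled finite \'etale cover is itself a nontrivial ingredient, absent from your proposal. The references you gesture at (\cite{PaninStavrovaA1}) work under an isotropy hypothesis; your sketch needs, but does not supply, a mechanism to remove it. In short, the Beauville--Laszlo setup in Step~3 is a reasonable first reformulation, but the ``main obstacle'' you name is the entire theorem, and the path you propose through it is not the path that works for arbitrary reductive $\bG$ over an arbitrary affine semilocal $U$.
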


This theorem will be proved in Section~\ref{sect:TorsorsA1}. If $U$ is a scheme over a field, then this is a slight generalization of~\cite[Thm.~4]{FedorovGrSerreNonSC}. The general case needs only minor modifications because most of the work is happening over the closed points of $U$ anyways.

\subsubsection{} We would like to mention some recent results concerning the mixed characteristic case of the Grothendieck--Serre conjecture: in~\cite{PaninMixedSL1} the conjecture is proved for $\SL_1(D)$, where $D$ is an Azumaya algebra over an unramified regular local ring. In~\cite{GuoPaninGrSerre} the conjecture is proved when $R$ is a semilocal ring of a scheme that is smooth and projective over a discrete valuation ring over which the group scheme is defined. More generally, in~\cite{GuoLiuGrSerre} the conjecture is proved in the unramified case, provided that $\bG$ is a constant group scheme.

\subsection{Notations} For a reductive group scheme $\bG$ we denote by $\bG^{\ad}$ its adjoint group scheme, and by $\Ru\bG$ its unipotent radical. By a simple group scheme we always mean a simple semisimple group scheme as in~\cite[Exp.~XXIV, 5.3]{SGA3-3}.

In this paper we work with right torsors; we only consider torsors for flat and finitely presented group schemes. If $\bG$ is a $T$-group scheme and $T'\to T$ be a morphism, we say ``a $\bG$-torsor over $T'$'' to mean a $\bG_{T'}$-torsor. We say that a $\bG$-torsor over $T'$ is \emph{generically trivial\/} if its restriction to some dense open subscheme of $T'$ is trivial.

Let $\bG$ be a $T$-group scheme and $T'\to T$ is a morphism. Let $\bH$ be a $T$-subgroup scheme of $\bG$ and $\cE$ be a $\bG$-torsor over $T'$. By an \emph{$\bH$-reduction\/} of $\cE$ we mean a pair $(\cH,\phi)$, where $\cH$ is an $\bH$-torsor over $T'$ and $\phi\colon\cH\times^\bH\bG\to\cE$ is an isomorphism of $\bG$-torsors. If such a reduction exists, then we say that $\cE$ \emph{can be reduced to $\bH$.}

\subsection{Acknowledgements} The author is grateful to Vladimir Chernousov, Philippe Gille, Ivan Panin, Raman Parimala, and Anastasia Stavrova for constant interest in his work. The author thanks K\k{e}stutis \v{C}esnavi\v{c}ius and Fei Liu for finding errors in early drafts. The author also thanks Dr.~\v{C}esnavi\v{c}ius for invaluable discussions. The author is grateful to anonymous referees for valuable suggestions.

A part of this work was done during the Georgia Algebraic Geometry Symposium at Emory University, the author wants to thank the organizers. The author is partially supported by the NSF DMS grants 2001516 and~2402553.

\section{Unipotent chains of torsors}\label{sect:Chains}
In this section, we define unipotent chains of torsors. The idea is that every torsor in the chain is obtained from the previous by ``modifying along a unipotent subgroup scheme.''

Let $\bG$ be a reductive group scheme and let $\cE$, $\cE'$ be $\bG$-torsors. The scheme of $\bG$-automorphisms of $\cE$ is a group scheme $\Aut(\cE)$; this is a reductive group scheme called \emph{a strongly inner form of $\bG$} (since $\cE$ is \'etale locally trivial, $\Aut(\cE)$ is isomorphic to $\bG$ \'etale locally). The scheme of $\bG$-isomorphisms $\Iso(\cE,\cE')$ is naturally an $\Aut(\cE)$-torsor. It is also a left $\Aut(\cE')$-torsor, though we can identify left and right torsors using the inversion in $\bG$. Recall that $\Ru$ stands for the unipotent radical.
\begin{definition}\label{def:UnipChain}
Let $X$ be a scheme; $\bG$ be a reductive $X$-group scheme. \emph{A unipotent chain of $\bG$-torsors} is a sequence
\begin{equation}\label{eq:chain}
\cE_1, \ldots, \cE_n, \bP_1, \ldots, \bP_{n-1}, \tau_1, \ldots, \tau_{n-1},
\end{equation}
where $\cE_i$ is a $\bG$-torsor, $\cE_1=\bG$ is a trivial $\bG$-torsor, $\bP_i\subset\Aut(\cE_i)$ is a parabolic subgroup scheme, and $\tau_i$ is a reduction of the $\Aut(\cE_i)$-torsor $\Iso(\cE_i,\cE_{i+1})$ to $\Ru\bP_i$. We also say that the chain \emph{connects} the trivial torsor to $\cE_n$.
\end{definition}
Note an obvious lemma.
\begin{lemma}\label{lm:ChainPullback}
  In the notation of the definition, let $f\colon Y\to X$ be a morphisms of schemes. Then $f^*\cE_1$, \ldots, $f^*\cE_n$, $f^*\bP_1$, \ldots, $f^*\bP_{n-1}$, $f^*\tau_1$, \ldots, $f^*\tau_{n-1}$ is a unipotent chain of $f^*\bG$-torsors.
\end{lemma}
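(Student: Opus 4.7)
The lemma is essentially formal: it amounts to checking that each of the constructions entering the definition of a unipotent chain (trivial torsor, automorphism group scheme of a torsor, parabolic subgroup scheme, unipotent radical, isomorphism torsor, reduction of structure group) is compatible with arbitrary base change along $f\colon Y\to X$.

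My plan is to go through the pieces of the chain~\eqref{eq:chain} one by one. First, the pullback of the trivial $\bG$-torsor $\cE_1=\bG$ is $f^*\bG$, which is the trivial $f^*\bG$-torsor, giving the base case. Next, for any $\bG$-torsor $\cE_i$ there is a canonical isomorphism $f^*\Aut(\cE_i)\cong\Aut(f^*\cE_i)$ of $Y$-group schemes, since $\Aut(\cE_i)$ is the inner twist of $\bG$ by $\cE_i$ and this twisting construction commutes with base change. Under this identification, $f^*\bP_i$ is a subgroup scheme of $\Aut(f^*\cE_i)$; it is parabolic because parabolicity of a subgroup scheme of a reductive group scheme is preserved under arbitrary base change (SGA3, Exp.~XXVI). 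Moreover, the formation of the unipotent radical of a parabolic subgroup scheme of a reductive group scheme commutes with base change, so $f^*\Ru\bP_i=\Ru(f^*\bP_i)$.

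Similarly, the scheme $\Iso(\cE_i,\cE_{i+1})$ is an $\Aut(\cE_i)$-torsor obtained by twisting, and it commutes with base change: there is a canonical isomorphism $f^*\Iso(\cE_i,\cE_{i+1})\cong\Iso(f^*\cE_i,f^*\cE_{i+1})$ of $\Aut(f^*\cE_i)$-torsors. Pulling back the reduction $\tau_i$ along $f$ therefore yields a reduction of $\Iso(f^*\cE_i,f^*\cE_{i+1})$ to $\Ru(f^*\bP_i)$. Assembling these compatibilities gives the desired unipotent chain of $f^*\bG$-torsors. There is no real obstacle; the only thing to be careful about is bookkeeping the canonical identifications, but these are standard and do not require any hypotheses on $f$ beyond it being a morphism of schemes.
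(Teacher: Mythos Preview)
Your proposal is correct and matches the paper's approach: the paper simply labels this an ``obvious lemma'' and gives no proof, since the statement amounts exactly to the routine base-change compatibilities you spell out. Your write-up is a faithful unpacking of why the lemma is obvious.
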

\begin{lemma}\label{lm:ChainTrivAff}
    In the notation of the definition, assume that $X$ is affine. Then all the torsors $\cE_i$ are trivial.
\end{lemma}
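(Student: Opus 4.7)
The plan is to proceed by induction on $i$, with $\cE_1=\bG$ trivial by definition. For the induction step, assume $\cE_i$ is trivial; then a choice of trivialization identifies $\Aut(\cE_i)$ with $\bG$, and the $\Aut(\cE_i)$-torsor $\Iso(\cE_i,\cE_{i+1})$ acquires a reduction to $\Ru\bP_i$ via $\tau_i$. If I can show this underlying $\Ru\bP_i$-torsor $\tau_i$ is trivial over $X$, then $\Iso(\cE_i,\cE_{i+1})$ admits a section, yielding an isomorphism $\cE_i\cong\cE_{i+1}$ and hence triviality of $\cE_{i+1}$. So the whole statement reduces to the vanishing claim: every $\Ru\bP_i$-torsor over the affine scheme $X$ is trivial.

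For this vanishing, I would invoke the standard structure theorem for unipotent radicals of parabolic subgroup schemes of reductive group schemes (SGA~3, Exp.~XXVI): $\Ru\bP_i$ admits a finite filtration by closed normal $X$-subgroup schemes whose successive quotients are vector group schemes $V(\cV)$ associated to locally free $\cO_X$-modules $\cV$ of finite rank. The plan is then to induct on the length of this filtration. The base case is the vanishing $H^1_{\mathrm{fppf}}(X,V(\cV))=H^1_{\mathrm{Zar}}(X,\cV)=0$, which holds because $X$ is affine and $\cV$ is quasi-coherent. For the induction step, a short exact sequence $1\to U'\to U\to V(\cV)\to 1$ yields the exact sequence of pointed sets
\begin{equation*}
H^1(X,U')\to H^1(X,U)\to H^1(X,V(\cV)),
\end{equation*}
so any $U$-torsor reduces to a $U'$-torsor, which is trivial by the inductive hypothesis. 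Hence $H^1(X,\Ru\bP_i)=\{*\}$, as required.

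The main (minor) obstacle is just to be careful that the filtration of $\Ru\bP_i$ exists in this relative setting and that the exactness of the cohomology sequence of pointed sets is applied correctly; both are standard once the SGA~3 structure result is cited. No other subtlety seems to intervene, because affineness of $X$ gives the needed coherent-cohomology vanishing in one step.
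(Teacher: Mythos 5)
Your argument is essentially the same as the paper's: induct along the chain, and reduce each step to the vanishing of $H^1$ of an affine scheme with coefficients in the unipotent radical of a parabolic. The only difference is that the paper simply cites~\cite[exp.~XXVI, cor.~2.2]{SGA3-3} for the triviality of $\Ru\bP_i$-torsors over affine $X$, whereas you re-derive that vanishing from the filtration of $\Ru\bP_i$ by vector group schemes and the exact sequence of pointed sets; both are correct, and yours just unpacks the cited result.
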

\begin{proof}
  Induction on $n$ reduces the lemma to showing that $\cE_{n-1}\approx\cE_n$. Since $X$ is affine, every $\Ru\bP_{n-1}$-torsor is trivial by~\cite[exp.~XXVI, cor.~2.2]{SGA3-3}. Thus, $\Iso(\cE_{n-1},\cE_n)$ is a trivial $\Aut(\cE_{n-1})$-torsor. Hence, it has a section, which is the sought-for isomorphism.
\end{proof}

We note that the trivialized $\bG$-torsor has a canonical reduction to any subgroup scheme of $\bG$; we call this reduction \emph{standard}.
\begin{definition}\label{def:TrivUnipChain}
Let $X$ be a scheme; let $\bG$ be a reductive $X$-group scheme with fixed opposite parabolic subgroup schemes $\bP_\pm$. We say that a unipotent chain~\eqref{eq:chain} is \emph{trivial\/} if $\bP_1=\bP_\pm$, $\bG=\cE_1$, and for $i=2,\ldots,n$ there are trivializations of $\cE_i$ identifying $\bP_i$ either with $\bP_-$ or with $\bP_+$ and such that under these trivializations $\tau_i$ become the standard reductions of $\Iso(\cE_{i-1},\cE_i)=\bG$ to $\Ru\bP_{i-1}=\Ru\bP_\pm$.
\end{definition}
Note that the notion of a unipotent chain does not depend on a choice of opposite parabolic subgroup schemes $\bP_\pm$ but the notion of a trivial chain does. The pullback of a trivial chain is trivial. Theorem~\ref{th:Main} is a simple corollary of the following two theorems, Popescu's approximation theorem, and the Dedekind (that is, one-dimensional) case.
\begin{theorem}\label{th:Chain}
  Let $R$ be a Noetherian normal domain, let $\bG$ be a simply-connected simple $R$-group scheme with opposite proper parabolic subgroup schemes $\bP_\pm\subset\bG$, and let $\cE$ be a $\bG$-torsor over an open subscheme $U\subset\Spec R$. Assume that $\cE$ is trivial away from $V(f)\cap U$, where $f\in R-\{0\}$. Then there is a closed subset $T\subset V(f)$ of codimension at least two in $\Spec R$ and a unipotent chain of $\bG$-torsors over $\Spec R-T$:
  \[
    \cE_1=\bG_{\Spec R-T}, \cE_2, \ldots, \cE_n, \bP_1, \ldots, \bP_{n-1},\tau_1,\ldots,\tau_{n-1}
  \]
  such that $(\cE_n)_{U-T}\simeq\cE_{U-T}$ and the chain is trivial over $\Spec R-V(f)$.
\end{theorem}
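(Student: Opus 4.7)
My approach is to realize the torsor $\cE$ as the endpoint of a chain whose steps come from factoring a transition function in $\bG(K)$, $K=\Frac R$, via the Borel--Tits generation theorem for simply-connected isotropic simple groups (as used by Gille--Stavrova), while controlling the poles of the unipotent factors so that they stay inside $V(f)$.

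First, using the given trivialization on $U-V(f)$, I would glue $\cE|_U$ with the trivial $\bG$-torsor on $\Spec R-V(f)$ along their overlap $U-V(f)$ to produce a $\bG$-torsor $\cE^\sharp$ on $W:=U\cup(\Spec R-V(f))$. The complement $\Spec R-W=V(f)\cap(\Spec R-U)$ lies inside $V(f)$; its codimension $\geq 2$ part is absorbed into $T$, while codimension one components of $\Spec R-U$ that happen to lie in $V(f)$ are handled at the end by defining $\cE_n$ to be trivial in a Zariski neighborhood of such components and gluing with the rest of the chain.

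Next, at each codimension one prime $\fp\subset R$ whose vanishing locus is a component of $V(f)$, the local ring $R_\fp$ is a DVR and $\cE^\sharp_{R_\fp}$ is a generically trivial $\bG$-torsor. By the Dedekind case of Grothendieck--Serre (known for simply-connected isotropic reductive groups over DVRs, and mentioned as an ingredient of Theorem~\ref{th:Main} in the excerpt), $\cE^\sharp_{R_\fp}$ is trivial, producing a second trivialization near $\fp$. Comparing it with the trivialization from $\Spec R-V(f)$ gives a transition element $g_\fp\in\bG(K)$. By the Borel--Tits generation theorem applied to the simply-connected isotropic simple group $\bG_K$, $g_\fp$ factors as an alternating product $u_1 v_1\cdots u_m v_m$ with $u_i\in\Ru\bP_+(K)$ and $v_i\in\Ru\bP_-(K)$; because $\Ru\bP_\pm$ is split unipotent (isomorphic as a scheme to affine space), I can arrange these factors so that their pole divisors lie inside $V(\fp)\subset V(f)$. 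Each factor then contributes one step $\tau_i$ of the chain, together with the next torsor $\cE_{i+1}$. Overall the chain is defined away from the union of pole divisors (lying in $V(f)$) plus a codimension $\geq 2$ locus (absorbed into $T$). By construction all the transitions are trivial over $\Spec R-V(f)$, so the chain is trivial there in the sense of Definition~\ref{def:TrivUnipChain}; and on $U-T$ the final torsor $\cE_n$ matches $\cE$ via the construction of $\cE^\sharp$.

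\textbf{Main obstacle.} The principal difficulty is to patch the local decompositions at distinct codimension one primes of $V(f)$ into a single globally defined unipotent chain on $\Spec R-T$, while simultaneously keeping the pole divisors of all unipotent factors inside $V(f)$. I expect this requires an iterative ``moving'' argument: after each unipotent modification the residual transition function becomes regular at one more height-one prime of $V(f)$, so that after finitely many steps only the trivial transition remains outside $V(f)$. A secondary subtlety is handling codimension one components of $\Spec R-U$ that happen to lie in $V(f)$, which cannot be added to $T$ and hence require a separate ad-hoc gluing compatible with the chain constructed inside $W$.
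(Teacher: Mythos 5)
Your proposal identifies the correct skeleton of the argument — the Dedekind/DVR case of Grothendieck--Serre to trivialize near the height one primes of $V(f)$, transition elements compared against the trivialization over $\Spec R - V(f)$, and the Borel--Tits (Gille) decomposition of those transitions into unipotent factors — which is indeed the engine of the paper's proof. But there are two genuine gaps, and both are where the real work happens.

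First, your claim that the Borel--Tits factors of $g_\fp\in\bG(K)$ can be arranged so that ``their pole divisors lie inside $V(\fp)\subset V(f)$'' is unjustified, and the paper does not even attempt it: the assertion that $\Ru\bP_\pm\simeq\A^N$ does nothing to let you push the poles of a $K$-point into a prescribed divisor. What the paper does instead is allow the unipotent factors to live in $\Ru\bP_\pm(R_{fh})$ for a judiciously chosen $h\in R$, so their poles lie in $V(fh)$, not $V(f)$; the chain is then defined over $\Spec R - V(f,h)$, and $h$ is chosen to avoid every minimal prime of $R$ containing $f$, so that $T=V(f,h)$ has codimension at least two. Trying to force poles into $V(f)$ would (by Hartogs) make the glued torsors extend to all of $\Spec R$, which is too strong. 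Second, you flag the patching of local decompositions at distinct height one primes as ``the principal difficulty'' and suggest an iterative moving argument, but no concrete mechanism is given. The paper resolves exactly this via an approximation argument: it works over the completed local fields $\overline K_j=\Frac(\widehat{R_{\fp_j}})$, obtains Borel--Tits factorizations there, and then uses a filtration-plus-Chinese-Remainder approximation (Lemmas~\ref{lm:Approximation}--\ref{lm:ApproxG}) to produce a \emph{single} global sequence $u_1,\ldots,u_n\in\Ru\bP_\pm(R_{fh})$ that approximates the local data at all $\fp_j$ simultaneously (including the $\fp_j\notin U$, where one simply sets $g_j=1$; no separate ad-hoc gluing is needed). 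Hartogs then identifies $\cE_{U-T}$ with $\Glue(\bG,u_n\cdots u_1)_{U-T}$, and the chain is read off from the nested gluings $\cE_i=\Glue(\bG,u_{i-1}\cdots u_1)$ using Lemma~\ref{lm:GlueRed}. These approximation lemmas are the essential missing content in your sketch.
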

We will give a proof in Section~\ref{sect:ProofOfChain}.

\begin{theorem}\label{th:Main2}
Let $X$ be an integral affine scheme smooth and of positive relative dimension over a semilocal Dedekind domain. Let $\bG$ be a reductive $X$-group scheme with opposite parabolic $X$-subgroup schemes $\bP_\pm$. Let $T\subset Y\subset X$ be closed subsets such that $Y$ is fiberwise of positive codimension over the Dedekind domain and $T$ is of codimension at least two in $X$. Let $\cE$ be a $\bG$-torsor over $X$ such that there is a unipotent chain of $\bG$-torsors over $X-T$ connecting $\bG_{X-T}$ to $\cE_{X-T}$ and assume that the chain is trivial over $X-Y$. Then $\cE$ is trivial Zariski semilocally over $X$ (that is, for any finite subset of $X$, the torsor $\cE$ is trivial in a Zariski neighborhood of this subset).
\end{theorem}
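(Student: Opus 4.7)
The plan is to adapt the geometric strategy of~\cite{FedorovMixedChar,CesnaviciusGrSerre}, with the unipotent chain playing the role traditionally played by a single reduction to the unipotent radical, and then reduce to Theorem~\ref{th:A1}. It suffices to fix a finite subset $\bx\subset X$ and show that $\cE|_{\Spec R}$ is trivial for $R:=\cO_{X,\bx}$. After shrinking $X$ around $\bx$, a generic projection from a suitable projective compactification combined with a Bertini-type argument over the Dedekind base should produce a smooth morphism $\pi\colon X\to S$ with one-dimensional geometric fibers to a smooth $\cO$-scheme $S$, arranged so that $T$ is $S$-finite and $Y$ is $S$-quasifinite; the codimension hypotheses on $T$ and $Y$ are precisely what makes these conditions achievable by a dimension count. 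Setting $C:=X\times_S\Spec R$ then gives a smooth affine $R$-curve with diagonal section $\Delta\colon\Spec R\to C$ satisfying $\Delta^*p_1^*\cE\simeq\cE|_{\Spec R}$, and by Lemma~\ref{lm:ChainPullback} the torsor $p_1^*\cE$ is connected to the trivial torsor by a unipotent chain over $C-T_C$ that is trivial over $C-Y_C$, where $T_C:=p_1^{-1}(T)$ is $R$-finite and $Y_C:=p_1^{-1}(Y)$ is $R$-quasifinite.

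The next step is to \emph{equate} the group scheme via Section~\ref{sect:Equating}: passing to an étale cover of $C$ through which $\Delta$ still factors, I would arrange that $\bG_C$ together with the pair $(\bP_+,\bP_-)$ is the pullback of a reductive group scheme over $R$ with opposite parabolics. Compatibility of the equating with the pair $\bP_\pm$ is essential because the notion of triviality of a unipotent chain (Definition~\ref{def:TrivUnipChain}) depends on this choice, and this is the main feature distinguishing the present equating from the one used in the quasisplit case.

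Following~\cite{CesnaviciusGrSerre}, after further improvements of the curve data I would produce an étale morphism $\phi\colon C\to W$ onto an open $W\subset\A^1_R$ with $\phi|_{Y_C}\colon Y_C\to\phi(Y_C)\subset W$ an isomorphism and $\phi^{-1}(\phi(Y_C))=Y_C$. Étale descent along $\phi$ yields a torsor $\cE_W$ over $W$ whose pullback is $p_1^*\cE$, and the triviality of the chain on $C-Y_C$ provides the descent datum carrying the chain itself to a unipotent chain on $W-\phi(T_C)$, trivial over $W-\phi(Y_C)$. By Lemma~\ref{lm:ChainTrivAff} the torsor $\cE_W$ is then trivial on every affine open of $W$ disjoint from $\phi(T_C)$. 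Choosing an $R$-finite $Y'\subset\A^1_R$ containing $\phi(T_C)\cup(\A^1_R-W)$ with $\A^1_R-Y'$ affine, the restriction $\cE_W|_{\A^1_R-Y'}$ is trivial and patches with the trivial torsor on $\A^1_R-Y'$ to yield a $\bG$-torsor $\widetilde\cE$ on $\A^1_R$ trivial off the $R$-finite subset $Y'$. Theorem~\ref{th:A1} then forces $\Delta^*\widetilde\cE$ to be trivial, and unwinding the constructions identifies $\Delta^*\widetilde\cE$ with $\cE|_{\Spec R}$, completing the proof.

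The main obstacle will be descending the \emph{chain} along $\phi$ so that the descended chain is trivial outside the smaller set $\phi(T_C)$ rather than only outside $\phi(Y_C)$; this requires essential use of the precise form of Definition~\ref{def:TrivUnipChain} and of the containment $T_C\subset Y_C$. A second delicate point is carrying out the equating step compatibly with the pair of opposite parabolics, since ordinary equating arguments only equate the ambient reductive group scheme.
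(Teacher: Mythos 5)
Your proposal follows essentially the same route as the paper: fiber into an affine $R$-curve $C$ with a section $\Delta$ recovering $\cE$ (the paper uses~\cite[Prop.~4.1]{CesnaviciusGrSerre} for this), equate the group scheme compatibly with $\bP_\pm$ after a finite \'etale cover of $C$, descend the unipotent chain from $C$ to an open $W\subset\A^1_R$ via a Nisnevich (elementary distinguished) square by exploiting triviality of the chain on $C-Y_C$, patch to a torsor over $\A^1_R$ trivial away from an $R$-finite set, and finish by applying Theorem~\ref{th:A1}. You correctly isolate the two technical obstacles; the paper packages them as Propositions~\ref{pr:DescendChain} and~\ref{pr:EquatingGroups}, with the latter additionally carrying a maximal torus $\bT\subset\bP_-\cap\bP_+$ through the construction so that equating parabolics reduces to a torsor under $\bT^{\ad}$ (Proposition~\ref{pr:TorTorsors}) rather than under a less tractable stabilizer.

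One small slip in the final patching step: you require an $R$-finite $Y'$ containing $\phi(T_C)\cup(\A^1_R-W)$, but the construction (Proposition~\ref{pr:prepare0}, following~\cite{GuoLiuGrSerre}) only yields an open $W\subset\A^1_R$ with no control on its complement, which need not be $R$-finite, so such a $Y'$ may not exist and the restriction $\cE_W|_{\A^1_R-Y'}$ you invoke is not even defined on all of $\A^1_R-Y'$. The paper instead picks an $R$-finite closed $Y'\subset W$ with $\phi(T_C)\subset Y'$ and $W-Y'$ affine (\cite[Lemma~3.5(ii)]{FedorovMixedChar}); since $Y'\subset W$ the two opens $W$ and $\A^1_R-Y'$ cover $\A^1_R$, their intersection $W-Y'$ is affine, so Lemma~\ref{lm:ChainTrivAff} trivializes $\cE_W$ there and the gluing goes through. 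This is a local adjustment and does not affect the soundness of your overall strategy.
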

This theorem will be proved in Section~\ref{Sect:Proof}.

\begin{proof}[Derivation of Theorem~\ref{th:Main} from Theorems~\ref{th:Chain} and~\ref{th:Main2}]
Since $R$ is regular, $\Spec R$ is the disjoint union of its irreducible components, so we may assume that $R$ is integral.
Assume first that $\bG$ is a simple group scheme. By Popescu's Theorem (\cite[Tag07GC]{StacksProject}), we may assume that $R$ is a semilocal ring of a finite set $\bx$ of points on an integral affine scheme $X$ that is smooth over a semilocal Dedekind domain. By~\cite[Thm.~1]{Guo2019GrSerreDedekind}, we may assume that $X$ is of positive relative dimension over this Dedekind domain. Spreading out and replacing $X$ with an appropriate affine neighborhood of $\bx$, we may assume that $\bG$ and $\cE$ are defined over $X$ and that $\bG$ has a proper parabolic subgroup scheme $\bP_+$. Let $\bP_-$ be an opposite parabolic subgroup scheme (it exists because $X$ is affine, see~\cite[exp.~XXVI, cor.~4.3.5(i)]{SGA3-3}).

Applying again~\cite{Guo2019GrSerreDedekind} to the semilocalization of $X$ at the generic points of special fibers and spreading out, we find a closed subscheme $Y\subset X$ fiberwise of positive codimension over the Dedekind domain such that $\cE$ is trivial away from $Y$. Moreover, we may assume that $Y=V(f)$ for $f\in\Gamma(X,\cO_X)$.

Applying Theorem~\ref{th:Chain} with $R=\Gamma(X,\cO_X)$, $U=X$, we get a closed subset $T\subset Y$ of codimension at least two in $X$ and a unipotent chain of $\bG$-torsors over $X-T$ connecting $\bG_{X-T}$ to $\cE_{X-T}$. Moreover, the chain is trivial over $X-Y$. It remains to apply Theorem~\ref{th:Main2}.

Consider now the general case. Recall that a semisimple simply-connected group scheme can be written as the product of Weil restrictions of simple group schemes along finite connected \'etale covers (see~\cite[exp.~XXIV, prop.~5.10]{SGA3-3}). Write $\bG=\prod_{i=1}^r\bG_i$, where $\bG_i$ is the Weil restriction of a simple $R_i$-group scheme $\overline\bG_i$, where the integral domain $R_i$ is a finite \'etale $R$-algebra. In particular, each $R_i$ is semilocal. We need a lemma.
\begin{lemma}\label{lm:TotallyIsotropic}
  The simply-connected group scheme $\bG$ has a strictly proper parabolic subgroup scheme if and only if each $\overline\bG_i$ has a proper parabolic subgroup scheme.
\end{lemma}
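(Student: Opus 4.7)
The plan is to apply the structure theorem (SGA3, exp.~XXIV, prop.~5.10) already invoked in the proof: $\bG = \prod_{i=1}^r \bG_i$ with $\bG_i = \Res_{R_i/R}\overline\bG_i$. The proof then rests on three structural inputs which, combined with the definition of a strictly proper parabolic, render the lemma a formal consequence.

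First, every parabolic subgroup scheme of a product of reductive group schemes is a product of parabolic subgroup schemes of the factors (standard; via SGA3, exp.~XXVI, one reduces this to the analogous statement for Borel subgroup schemes and maximal tori). Second, the formation of parabolic subgroup schemes commutes with Weil restriction along finite étale morphisms: every parabolic subgroup scheme of $\bG_i = \Res_{R_i/R}\overline\bG_i$ has the form $\Res_{R_i/R}\overline\bP_i$ for a unique parabolic $\overline\bP_i\subset\overline\bG_i$, and $\Res_{R_i/R}\overline\bP_i = \bG_i$ precisely when $\overline\bP_i = \overline\bG_i$. Third, since each $\bG_i$ is $R$-simple in the sense of SGA3 (it admits no nontrivial proper normal connected semisimple $R$-subgroup scheme), every normal semisimple $R$-subgroup scheme of $\bG$ is a subproduct $\prod_{i\in I}\bG_i$ for some $I\subset\{1,\ldots,r\}$.

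Combining these, the ``only if'' direction goes as follows. A strictly proper parabolic $\bP\subset\bG$ decomposes as $\bP = \prod_i\Res_{R_i/R}\overline\bP_i$. Applying the strict-properness condition to the normal semisimple subgroup $\bG_i$ forces $\Res_{R_i/R}\overline\bP_i\subsetneq\bG_i$, hence $\overline\bP_i\subsetneq\overline\bG_i$ is a proper parabolic. Conversely, given proper parabolics $\overline\bP_i\subsetneq\overline\bG_i$, the product $\bP := \prod_i\Res_{R_i/R}\overline\bP_i$ is a parabolic subgroup scheme of $\bG$, and for any subproduct $\bH = \prod_{i\in I}\bG_i$ one computes $\bP\cap\bH = \prod_{i\in I}\Res_{R_i/R}\overline\bP_i\subsetneq\bH$, so $\bP$ is strictly proper.

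The main obstacle I expect is pinning down the classification of normal semisimple $R$-subgroup schemes of $\bG$: one must verify that each $\bG_i$ is $R$-simple (which follows from the absolute simplicity of $\overline\bG_i$ over $R_i$ together with the indecomposability of $\Res_{R_i/R}$ along the connected finite étale cover $R_i/R$) and that distinct factors $\bG_i$ share no nontrivial normal semisimple $R$-subgroup scheme. Once this rigidity is secured, the remainder of the argument is routine bookkeeping with the product decomposition.
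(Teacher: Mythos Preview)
Your proposal is correct and follows essentially the same approach as the paper: both establish that parabolics of $\bG=\prod_i\Res_{R_i/R}\overline\bG_i$ are exactly products $\prod_i\Res_{R_i/R}\overline\bP_i$ (the paper does this via an isomorphism of parabolic schemes $\prod_i\Res_{R_i/R}\cP_i\xrightarrow{\sim}\cP$, checked \'etale-locally in the split case, whereas you cite the product and Weil-restriction compatibilities as known), and then observe that ``strictly proper'' amounts to each $\overline\bP_i$ being proper. Your treatment is in fact slightly more careful than the paper's, which concludes with ``it is clear'' without making explicit, as you do, that the normal semisimple subgroup schemes of $\bG$ are the subproducts $\prod_{i\in I}\bG_i$.
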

\begin{proof}
  Let $\cP$ be the scheme of parabolic subgroup schemes of $\bG$, let $\cP_i$ be the scheme of parabolic subgroup schemes of $\overline\bG_i$ (cf.~\cite[exp.~XXVI, cor.~3.5]{SGA3-3}). Since a parabolic subgroup scheme of a reductive group scheme gives rise to a parabolic subgroup scheme of its Weil restriction, we get a morphism $\prod_i\Res_{R_i/R}\cP_i\to\cP$, where $\Res_{R_i/R}$ is the Weil restriction functor. Let us check that this morphism is an isomorphism. Since this is enough to check after an \'etale base change, we may assume that for all $i$ we have $R_i=R$ and that each $\overline\bG_i$ is split. Then the statement reduces to the statement that the parabolic subgroup schemes of $\prod_{i=1}^r\overline\bG_i$ are exactly subgroup schemes of the form $\bP_1\times\ldots\times\bP_r$, where each $\bP_i$ is a parabolic subgroup scheme of $\overline\bG_i$. Choosing a split maximal torus and a Borel subgroup scheme in $\overline\bG_i$, we get a split maximal torus and a Borel subgroup scheme in $\prod_i\overline\bG_i$. Then we have a notion of standard parabolic subgroup schemes, and it is clear that every standard parabolic subgroup scheme in $\prod_i\overline\bG_i$ is the product of standard parabolic subgroup schemes in $\overline\bG_i$. It remains to note that every parabolic subgroup scheme is locally conjugate to a standard one.

  We see that parabolic subgroup schemes of $\bG$ correspond to collections $(\bP_1,\ldots,\bP_r)$, where each $\bP_i\subset\overline\bG_i$ is a parabolic subgroup scheme. It is clear that strictly proper parabolic subgroup schemes correspond to collections of proper parabolic subgroup schemes.
\end{proof}
By~\cite[exp.~XXIV, prop.~8.4]{SGA3-3}, $\cE$ corresponds to a sequence $\cE_1$, \ldots, $\cE_r$ of $\overline\bG_i$-torsors. These torsors are generically trivial. By the previous lemma, each $\overline\bG_i$ contains a proper parabolic subgroup scheme. Thus, by the already settled case, $\cE_i$ is trivial, so $\cE$ is trivial as well. This completes the proof of Theorem~\ref{th:Main}.
\end{proof}

\begin{proof}[Derivation of Theorem~\ref{th:AlmostTrivial} from Theorem~\ref{th:Chain}]
    As in the proof above, using Lemma~\ref{lm:TotallyIsotropic}, we may assume that $\bG$ is simple, simply-connected, and isotropic. By Theorem~\ref{th:Chain}, there is a closed subset $T\subset\Spec R$ of codimension at least two and a unipotent chain of $\bG$-torsors over $\Spec R-T$:
  \[
    \cE_1=\bG_{\Spec R-T}, \cE_2, \ldots, \cE_n, \bP_1, \ldots, \bP_{n-1},\tau_1,\ldots,\tau_{n-1}
  \]
  such that $(\cE_n)_{U-T}\simeq\cE_{U-T}$. Let $\Spec A\to U-T$ be a morphism. By Lemma~\ref{lm:ChainPullback} we get a unipotent chain $\bG_A$, $(\cE_2)_A$, \ldots, $\cE_A$, $(\bP_1)_A$, \ldots. By Lemma~\ref{lm:ChainTrivAff}, $\cE_A$ is trivial.
\end{proof}

\begin{remarks}\label{rem:rem2}
\noindstep\label{rem:Ces} We can easily derive~\cite[Thm.~9.1]{CesnaviciusGrSerre} from Theorem~\ref{th:Main2}. We argue as in the proof of~\cite[Prop.~4.2]{CesnaviciusGrSerre}. By Popescu's Theorem we may assume that $\bG$ is a quasi-split reductive $X$-group scheme, where $X$ is as in the proof of Theorem~\ref{th:Main}. Let $\bB\subset\bG$ be a Borel subgroup scheme and $\cE$ be a generically trivial $\bG$-torsor. Similarly to the proof of Theorem~\ref{th:Main}, we can find $Y\subset X$ that is fiberwise over the Dedekind domain of positive codimension and such that $\cE$ is trivial away from~$Y$; we fix a trivialization. Since the scheme $\cE/\bB$ classifying $\bB$-reductions of $\cE$ is $R$-projective, the standard reduction of the trivial $\bG$-torsor $\cE_{X-Y}$ to $\bB$ can be extended to a closed subset $T\subset Y$ of codimension at least two and we denote the $\bB$-reduction thus obtained by $(\cB,\phi)$ (that is, $\cB$ is a $\bB$-torsor over $X-T$ and $\phi\colon\cB\times^\bB\bG\to\cE_{X-T}$ is an isomorphism). Consider the torus $\bT:=\bB/\Ru\bB$. Then $\cB/\Ru\bB$ is a $\bT$-torsor, trivialized over $X-Y$. By~\cite[cor.~6.9]{ColliotTS1979fibres} this torsor extends to~$X$, so by the Grothendieck--Serre conjecture for tori (see~\cite[thm.~4.1(i)]{ColliotTheleneSansuc}) it is Zariski semilocally trivial. Thus, after multiplying the trivialization of $\cE$ over $X-Y$ by a section of $\bT$ over $X-Y$, the standard $\Ru\bB$ reduction of $\cE_{X-Y}$ extends to a~reduction $\tau$ of~$\cE$ to $\Ru\bB$ over $X-T$. Now $\bG_{X-T}$, $\cE_{X-T}$, $\bB_{X-T}$, $\tau$ is a unipotent chain connecting the trivial torsor to $\cE_{X-T}$. Moreover, this chain is trivial over $X-Y$. Now we can apply Theorem~\ref{th:Main2}.

We note that our argument is somewhat simpler than the original one as we do not have to descend torsors under unipotent group schemes along affine morphisms as in~\cite[Sect.~7]{CesnaviciusGrSerre}. This simplification is made possible by Proposition~\ref{pr:DescendChain} below.

\noindstep It seems plausible, that for Theorem~\ref{th:Chain} we only need $\bG$ to have a proper parabolic subgroup scheme generically. Unfortunately, we cannot prove the Grothendieck--Serre conjecture in the case, when $\bG$ is only generically totally isotropic: the problem is that we cannot equate generic parabolic subgroup schemes as in Proposition~\ref{pr:EquatingGroups}.
\end{remarks}

\section{Unipotent chains: Proof of Theorem~\ref{th:Chain}}\label{sect:ProofOfChain}

\subsection{Gluing and reductions of torsors}\label{sect:Gluing}
Let $X$ be a scheme and fix a Zariski cover $X=W_1\cup W_2$. Let $\bH$ be a flat and finitely presented $X$-group scheme and let $g\in\bH(W_1\cap W_2)$. Then we can use $g$ to glue the trivial $\bH$-torsor over $W_1$ with the trivial $\bH$-torsor over $W_2$. We obtain an $\bH$-torsor $\Glue(\bH,g)$ over $X$. More precisely, the torsor $\Glue(\bH,g)$ has canonical trivializations $s_1$ over $W_1$ and $s_2$ over $W_2$ such that $s_2|_{W_1\cap W_2}=s_1|_{W_1\cap W_2}g$. Being trivialized over $W_1$, the $\bH$-torsor $\Glue(\bH,g)$ has a canonical reduction to any subgroup scheme of $\bH$ over $W_1$. We recall that such a reduction is called standard.

\begin{lemma}\label{lm:GlueRed}
  Notations being as above, assume that $\bU\subset\bH$ is an $X$-subgroup scheme and let $u\in\bU(W_1\cap W_2)$.

  (i) The $\bH$-torsor $\Glue(\bH,u)$ has a $\bU$-reduction that extends the standard reduction over $W_1$.

  (ii) Assume that $g\in\bH(W_1\cap W_2)$ and set $\cF:=\Glue(\bH,g)$, $\cF':=\Glue(\bH,ug)$. We use the trivialization of $\cF$ over $W_1$ to identify the restrictions of $\bH$ and $\Aut(\cF)$ to $W_1$. Assume that there is an $X$-subgroup scheme $\bU'\subset\Aut(\cF)$ such that under the above identification we have $\bU_{W_1}=\bU'_{W_1}$. Then the $\Aut(\cF)$-torsor $\Iso(\cF,\cF')$ has a $\bU'$-reduction that extends the standard reduction over $W_1$.
\end{lemma}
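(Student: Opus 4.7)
The plan is to handle (i) directly by exhibiting the reduction as an extension-of-structure-group applied to a glued $\bU$-torsor, and then to deduce (ii) from (i) after computing the gluing cocycle of $\Iso(\cF,\cF')$ as an $\Aut(\cF)$-torsor.

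For (i), I set $\cU := \Glue(\bU, u)$, the $\bU$-torsor glued from trivial $\bU$-torsors over $W_1$ and $W_2$ via $u \in \bU(W_1\cap W_2)$. Extension of structure group along $\bU\hookrightarrow\bH$ commutes with gluing, so there is a canonical isomorphism $\cU \times^{\bU} \bH \cong \Glue(\bH, u)$ carrying the canonical trivialization of $\cU$ over $W_1$ (coupled with $1\in \bH$) to the canonical trivialization $s_1$ of $\Glue(\bH,u)$. This provides the desired $\bU$-reduction, which over $W_1$ is the standard one by construction.

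For (ii), let $s_1, s_2$ and $s_1', s_2'$ denote the canonical trivializations of $\cF$ and $\cF'$ respectively, and let $\phi_0^{(j)} \in \Iso(\cF,\cF')(W_j)$ be the unique $\bH$-equivariant isomorphism sending $s_j \mapsto s_j'$. The key step is to compute the cocycle relating $\phi_0^{(1)}$ and $\phi_0^{(2)}$ on $W_1 \cap W_2$: using $s_2 = s_1 g$ and $s_2' = s_1'(ug)$ together with right-equivariance gives $\phi_0^{(2)}(s_1) = s_1' u$, whence $\phi_0^{(2)} = \phi_0^{(1)}\circ a$ with $a \in \Aut(\cF)(W_1\cap W_2)$ corresponding to $u$ under the identification $\Aut(\cF)|_{W_1} = \bH|_{W_1}$ restricted to $W_1\cap W_2$. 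Hence $\Iso(\cF,\cF') \cong \Glue(\Aut(\cF), a)$, with the trivialization over $W_1$ given by $\phi_0^{(1)}$. Since the identification $\bU_{W_1} = \bU'_{W_1}$ restricts to $\bU_{W_1 \cap W_2} = \bU'_{W_1 \cap W_2}$, the element $a$ actually lies in $\bU'(W_1 \cap W_2)$, and part (i) applied with $(\bH,\bU)$ replaced by $(\Aut(\cF), \bU')$ yields the desired $\bU'$-reduction extending the standard one over $W_1$. There is no conceptual obstacle; the only delicate point is the bookkeeping of left/right actions and the transport of $\bU\subset\bH$ across the trivialization $s_1$, which is exactly what makes the cocycle $a$ land in $\bU'(W_1\cap W_2)$ rather than merely in $\Aut(\cF)(W_1\cap W_2)$.
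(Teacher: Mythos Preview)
Your proof is correct and follows essentially the same approach as the paper: for (i) you form $\Glue(\bU,u)$ and extend structure group, and for (ii) you compute the transition cocycle for $\Iso(\cF,\cF')$ in the trivializations $\phi_0^{(1)},\phi_0^{(2)}$, find that it equals (the image of) $u$ in $\bU'(W_1\cap W_2)$, and then invoke (i) with $(\Aut(\cF),\bU')$ in place of $(\bH,\bU)$. The paper's argument is the same up to notation.
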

\begin{proof}
  (i) Consider the $\bU$-torsor $\Glue(\bU,u)$. It follows from the definitions that $\Glue(\bU,u)\times^\bU\bH\simeq\Glue(\bH,u)$.

  (ii) Let $s_i$ be the canonical trivialization of $\cF$ over $W_i$, while $s'_i$ be the canonical trivialization of $\cF'$ over $W_i$. These trivializations give sections (=trivializations) $t_i$ of $\Iso(\cF,\cF')$ over $W_i$ sending $s_i$ to $s'_i$. Then, on $W_1\cap W_2$, we have $t_2(s_1)=t_1(s_1)u$.

  Next, on $W_1\cap W_2$, we use the trivializations $s_1$ and $t_1$ to identify $\Aut(\cF)$ and $\Iso(\cF,\cF')$ with $\bG$. Then the action of $\Aut(\cF)$ on $\Iso(\cF,\cF')$ is identified with the action of $\bG$ on itself via the multiplication on the right. Thus we have on $W_1\cap W_2$: $t_2=t_1u$. Therefore, $\Iso(\cF,\cF')=\Glue(\Aut(\cF),u)$. It remains to apply part (i) with $\bU'\subset\Aut(\cF)$ instead of $\bU\subset\bH$.
\end{proof}

\subsection{Trivializing away from two divisors} The main result of this section is Proposition~\ref{pr:Away2Div}.
\begin{lemma}\label{lm:Approximation}
  Let $R$ be a Noetherian normal domain and $f\in R-\{0\}$. Let $\fp_1,\ldots,\fp_m\subset R$ be all the distinct minimal prime ideals containing $f$. Let $\overline\cO_j$ be the completion of the discrete valuation ring $R_{\fp_j}$ and let $\overline K_j$ be the fraction field of $\overline\cO_j$. Let $M$ be a finitely generated $R$-module. Assume that for all $j$ we are given $u_j\in M\otimes_R\overline K_j$. Then for all $N>0$ there are $h\in R$ and $u\in M\otimes_R R_{fh}$ such that for all $j$ we have $h\notin\fp_j$ and $u\equiv u_j\pmod{M\otimes_R\fp_j^N\overline\cO_j}$, where we view $u$ as an element of $M\otimes_R\overline K_j$ via the composition $R_{fh}\to\Frac(R)=\Frac(R_{\fp_j})\to\overline K_j$.
\end{lemma}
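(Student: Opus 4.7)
The plan is to first reduce to the case $M=R$, then invoke weak approximation in $K=\Frac R$ to produce a candidate $\alpha\in K$, and finally choose $h$ by prime avoidance so that $\alpha\in R_{fh}$.

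For the reduction, I would choose $e_1,\ldots,e_r\in M$ whose images form a $K$-basis of $M\otimes_R K$. Then each $e_i$ also maps into $M\otimes_R\overline{\cO}_j$, and $M\otimes_R\overline K_j=\bigoplus_i \overline K_j e_i$, so we can expand $u_j=\sum_i c_{ji}e_i$ with $c_{ji}\in\overline K_j$. The two $\overline\cO_j$-lattices $\bigoplus_i \overline\cO_j e_i$ and $\mathrm{image}(M\otimes_R\overline\cO_j)$ inside $M\otimes_R\overline K_j$ differ by bounded denominators, so after replacing $N$ by some $N'=N+N_0$ it suffices to find $a_i\in R_{fh}$ with $v_{\fp_j}(a_i-c_{ji})\ge N'$ for every $i,j$. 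Since we can handle the indices $i$ one at a time and multiply the resulting $h_i$ together (still staying outside every $\fp_j$), we reduce cleanly to $M=R$.

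For $M=R$, the hypotheses give distinct height one primes $\fp_1,\ldots,\fp_m$ of a Noetherian normal domain $R$, hence pairwise inequivalent discrete valuations $v_{\fp_j}$ on $K$ whose completions are precisely $\overline K_j$. Weak approximation then produces $\alpha\in K$ with $v_{\fp_j}(\alpha-c_j)\ge N'$ for all $j$. Since $R$ is Noetherian, the set $S$ of height one primes $\fq\subset R$ with $v_\fq(\alpha)<0$ is finite. For each $\fq\in S\setminus\{\fp_1,\ldots,\fp_m\}$ we have $\fq\neq\fp_j$ and both are height one, so neither is contained in the other; prime avoidance then yields $h\in\bigcap_{\fq\in S\setminus\{\fp_j\}}\fq$ with $h\notin\bigcup_j\fp_j$. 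Using the normality identity $R_{fh}=\bigcap_{\mathfrak{p}\text{ ht }1,\ fh\notin\mathfrak{p}}R_{\mathfrak{p}}$, the element $\alpha$ lies in $R_{fh}$ because every height one prime avoiding $fh$ also avoids $S$, so $v_{\mathfrak p}(\alpha)\ge0$ there. Setting $u:=\alpha$ (or $u:=\sum_i a_i e_i$ in the original reduction) completes the proof.

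I expect the only delicate point to be the prime-avoidance step: weak approximation pulls $c_j$ back to an element of $K$ but gives absolutely no control over its denominators, so one has to exploit both the normality of $R$ and the fact that the $\fp_j$ are minimal over $f$ in order to absorb all spurious poles of $\alpha$ into $V(fh)$. Beyond that, the argument is a routine combination of weak approximation and the description of a normal Noetherian domain as the intersection of its height one localizations.
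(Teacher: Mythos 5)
Your proof is correct, but it takes a somewhat different route from the paper's. Both arguments share the same core --- a Chinese Remainder Theorem / weak approximation step in the semilocal ring at $\fp_1,\ldots,\fp_m$ --- but they handle the denominator question differently. The paper first replaces $u_j$ by $f^lu_j$ for $l\gg 0$ so that $f^lu_j\in M\otimes_R\overline\cO_j$ for all $j$, then uses CRT in the semilocal Dedekind domain $R_{\fp_1,\ldots,\fp_m}$ to produce a single $v\in M_{\fp_1,\ldots,\fp_m}$ with $v\equiv f^lu_j\pmod{\fp_j^{N+k_jl}}$, writes $v=x/h$ with $x\in M$ and $h$ outside every $\fp_j$, and sets $u:=x/(f^lh)$. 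This stays entirely at the module level (no reduction to $M=R$), and the denominator of $u$ is controlled directly by construction, so there is no need for prime avoidance or the identity $R_{fh}=\bigcap_{\mathrm{ht}\,1,\ fh\notin\fp}R_\fp$. You instead reduce to $M=R$ via a $K$-basis, apply weak approximation to get $\alpha\in K$, and then absorb the unknown poles of $\alpha$ by prime avoidance and the intersection formula for the Krull/normal domain. Your route works, but it invokes slightly heavier machinery and needs the finiteness of the pole set $S$ (your step ``Since $R$ is Noetherian, $S$ is finite''); the paper's version sidesteps that entirely --- a point worth noticing, since the lemma is stated only for normal domains, though in applications $R$ is indeed Noetherian. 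Finally, your $N_0$ adjustment is harmless but unnecessary: since $e_i\in M$, the lattice $\bigoplus_i\overline\cO_je_i$ is already contained in the image of $M\otimes_R\overline\cO_j$, so achieving the congruence modulo $\fp_j^N$ in the coordinate lattice already gives it in the target lattice.
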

\begin{proof}
   For $l\gg0$ we have $f^lu_j\in M\otimes_R\overline\cO_j$ for all $j$. Consider the semilocal Dedekind domain $R_{\fp_1,\ldots,\fp_m}$ and the module $M_{\fp_1,\ldots,\fp_m}:=M\otimes_RR_{\fp_1,\ldots,\fp_m}$. Then by the Chinese Remainder Theorem we can find $v\in M_{\fp_1,\ldots,\fp_m}$ such that for all $j$ we have $f^lu_j\equiv v\pmod{M\otimes_R\fp_j^{N+k_jl}\overline\cO_j}$, where $k_j$ is the valuation of $f$ in $R_{\fp_j}$. Write $v=\frac xh$, where $x\in M$ and $h\notin\fp_j$ for all $j$. It remains to take $u:=\frac x{f^lh}$.
\end{proof}

We will keep the notation $R$, $f$, $\fp_j$, $\overline\cO_j$ and $\overline K_j$ through the end of the section. For an $R$-group scheme $\bH$, we denote by $\bH^{(N)}_j$ the $N$-th congruence subgroup of $\bH(\overline\cO_j)$, that is, the kernel of the homomorphism $\bH(\overline\cO_j)\to\bH(\overline\cO_j/\fp_j^N\overline\cO_j)$.
\begin{lemma}\label{lm:ApproxU}
  Let $R$, $f$, $\fp_j$, $\overline\cO_j$ and $\overline K_j$ be as above, let $\bU$ be an affine $R$-group scheme with a filtration $\bU=\bU_n\supset\bU_{n-1}\supset\ldots\supset\bU_0=\{e\}$ such that the successive quotients $\bU_i/\bU_{i-1}$ are isomorphic to additive group schemes of locally free $R$-modules of finite rank. Let $u_j\in\bU(\overline K_j)$ for $j=1,\ldots,m$. Then for $N>0$ there are $h\in R$ and $u\in\bU(R_{fh})$ such that for all $j$ we have $h\notin\fp_j$ and $u_j\in \bU^{(N)}_ju$.
\end{lemma}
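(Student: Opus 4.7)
The plan is to induct on the filtration length $n$. For the base case $n = 1$, the group scheme $\bU = \ga(M_1)$ is abelian, and the condition $u_j \in \bU^{(N)}_j u$ is equivalent to the additive condition $u_j - u \in \fp_j^N \overline\cO_j \cdot M_1$, which is exactly Lemma~\ref{lm:Approximation} applied to $M = M_1$.

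For the inductive step, I would first upgrade the filtration to a scheme-theoretic (but not group-theoretic) isomorphism $\bU \simeq \prod_{i=1}^n \ga(M_i)$ of $R$-schemes. Indeed, each exact sequence $1 \to \bU_{i-1} \to \bU_i \to \ga(M_i) \to 1$ splits as a sequence of $R$-schemes because over the affine base $\ga(M_i)$ the $\bU_{i-1}$-torsor $\bU_i \to \ga(M_i)$ is trivial: $\bU_{i-1}$ is itself filtered by additive group schemes of locally free modules, and $H^1$ of a vector group on an affine scheme vanishes. Iterating produces the decomposition. Expressing each $u_j \in \bU(\overline K_j)$ in these coordinates as a tuple $(x_{j,1},\ldots,x_{j,n})$ with $x_{j,i} \in M_i \otimes_R \overline K_j$, I would apply Lemma~\ref{lm:Approximation} separately for each index $i$ with some precision $N' \geq N$ to be chosen, producing $h_i \in R \setminus \bigcup_j \fp_j$ and $\bar x_i \in M_i \otimes_R R_{fh_i}$ with $\bar x_i \equiv x_{j,i} \pmod{\fp_j^{N'}\overline\cO_j}$ for all $j$, and then assemble the candidate $u := (\bar x_1,\ldots,\bar x_n) \in \bU(R_{fh})$ where $h := \prod_i h_i$.

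The main obstacle will be verifying that the coordinate-wise approximation passes to a multiplicative approximation in the group, i.e.\ that $u_j u^{-1} \in \bU^{(N)}_j$. In the chosen scheme coordinates, multiplication and inversion on $\bU$ are polynomial, so each coordinate of $u_j u^{-1}$ is given by a polynomial $P_k(x_j,\bar x)$ that vanishes on the diagonal $\{x_j = \bar x\}$. A Hadamard-type factorization gives $P_k(x_j,\bar x) = \sum_i (x_{j,i} - \bar x_i)\, Q_{k,i}(x_j,\bar x)$. Each difference $x_{j,i} - \bar x_i$ has $\fp_j$-adic valuation at least $N'$, while the polynomial factors $Q_{k,i}(x_j,\bar x)$ can contribute negative valuations bounded below by some explicit $-M$ depending on the pole orders of the coordinates of the $u_j$ at the primes $\fp_j$. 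Since the $u_j$'s are given as part of the input, $M$ is a finite explicit integer, and choosing $N' \geq N + M$ upfront forces every coordinate of $u_j u^{-1}$ into $\fp_j^N \overline\cO_j \cdot M_i$, which is precisely the statement that $u_j u^{-1} \in \bU^{(N)}_j$, i.e.\ $u_j \in \bU^{(N)}_j u$.
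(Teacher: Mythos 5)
Your proposal is correct, but it takes a genuinely different route from the paper. The paper inducts on the filtration length~$l$: at each step it approximates the image in the quotient $\bU_l/\bU_{l-1}$ via Lemma~\ref{lm:Approximation}, lifts both the approximant $\bar w$ and the error $\bar v_j$ to $\bU_l$, reduces the remainder to $\bU_{l-1}$, and then invokes the inductive hypothesis at a \emph{dynamically increased} precision $M$ chosen so that conjugation by the lift $w$ (a $\fp_j$-adically continuous automorphism) still lands in $(\bU_l)_j^{(N)}$. Your argument avoids this group-theoretic induction altogether: you linearize the underlying scheme once and for all as $\bU\simeq\prod_i\ga(M_i)$ (justified, as you say, because a $\bU_{i-1}$-torsor over an affine base is trivial), apply Lemma~\ref{lm:Approximation} to every coordinate simultaneously at a \emph{static} precision $N'$, and recover multiplicative approximation by a Hadamard factorization $P_k=\sum_i(x_{j,i}-\bar x_i)Q_{k,i}$ of the polynomial multiplication law, paying a fixed cost $-M$ bounded purely by the pole orders of the inputs and the degrees of $Q_{k,i}$. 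The trade-off: your version makes the precision loss completely explicit and concrete, while the paper's is basis-free and avoids the minor technicality that the $M_i$ are only locally free, not free (your ``scheme coordinates'' and polynomial maps have to be interpreted either via a basis-free symmetric-algebra formalism or by embedding $M_i$ as a direct summand of a free $R$-module). One small point you should make explicit: the torsor trivializations must be chosen so that the identity of $\bU$ maps to the origin $(0,\ldots,0)$, since the conclusion $u_ju^{-1}\in\bU_j^{(N)}$ requires the coordinates of $u_ju^{-1}$ to be $\fp_j^N$-close to those of~$e$.
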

\begin{proof}
  Let $l$ be an integer such that $0\le l\le n$. Assume that for all $j$ we have $u_j\in\bU_l(\overline K_j)$. We will show that for all $N>0$ we can find $h\in R$ and $u\in\bU_l(R_{fh})$ such that for all $j$ we have $h\notin\fp_j$ and $u_j\in (\bU_l)^{(N)}_ju$. When $l=n$, this statement is the claim of our lemma.

  We induct on $l$, the case $l=0$ being obvious. Assume that the statement is known for $l-1$. Let $\bar u_j\in(\bU_l/\bU_{l-1})(\overline K_j)$ be the image of $u_j$ in the quotient. By Lemma~\ref{lm:Approximation} we can find $h'\in R$ and $\bar w\in(\bU_l/\bU_{l-1})(R_{fh'})$ such that for all $j$ we have $h'\notin\fp_j$ and $\bar u_j=\bar v_j \bar w$, where $\bar v_j\in (\bU_l/\bU_{l-1})^{(N)}_j$.

  Since $H^1(R_{fh'},\bU_{l-1})=0$, we can lift $\bar w$ to an element $w\in\bU_l(R_{fh'})$. We claim that we can lift $\bar v_j$ to an element $v_j\in(\bU_l)^{(N)}_j$. Indeed, since $H^1(\overline\cO_j,\bU_{l-1})=0$, we can lift $\bar v_j$ to an element $v'\in\bU_l(\overline\cO_j)$. The image of $v'$ in $\bU_l(\overline\cO_j/\fp_j^N\overline\cO_j)$ is contained in $\bU_{l-1}(\overline\cO_j/\fp_j^N\overline\cO_j)$.
  Let $v''$ be a lift of this element to $\bU_{l-1}(\overline\cO_j)$. We can take $v_j:=v'(v'')^{-1}$.

  Then for all $j$ we have $u_j=v_jwu'_j$, where $u'_j\in\bU_{l-1}(\overline K_j)$. Since conjugation by any element is a continuous automorphism of $\bU_l(\overline K_j)$, we can find $M>0$ such that for all $j$ and all $a\in(\bU_l)^{(M)}_j$ we have $waw^{-1}\in(\bU_l)^{(N)}_j$. Applying the induction hypothesis, we find $h''\in R$ and $u'\in\bU_{l-1}(R_{fh''})$ such that for all $j$ we have $h''\notin\fp_j$ and $u'_j=a_ju'$, where $a_j\in(\bU_{l-1})^{(M)}_j$.

  Finally, take $h=h'h''$, we have
  \[
    u_j=v_jwu'_j=v_jwa_ju'=v_j(wa_jw^{-1})wu'\in(\bU_l)^{(N)}_j(wu').
  \qedhere\]
\end{proof}

\begin{lemma}\label{lm:ApproxG}
  Let $R$, $f$, $\fp_j$, $\overline\cO_j$ and $\overline K_j$ be as above, let $\bG$ be a simply-connected simple $R$-group scheme with proper opposite parabolic subgroup schemes $\bP_\pm$. Let $g_j\in\bG(\overline K_j)$ for $j=1,\ldots,m$. Then for some $n\ge0$ there are $h\in R$ and $u_1,\ldots,u_n\in \Ru\bP_\pm(R_{fh})$ such that for all $j$ we have $h\notin\fp_j$ and $g_j\in\bG(\overline\cO_j) u_1\ldots u_n$.
\end{lemma}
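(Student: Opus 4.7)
The plan is: use a Kneser--Tits-type factorization of each $g_j$ into unipotents drawn from $\Ru\bP_+(\overline K_j)$ and $\Ru\bP_-(\overline K_j)$, make the resulting factorization uniform in $j$, and then approximate the local factors one at a time by global elements via Lemma~\ref{lm:ApproxU}, absorbing the approximation errors into $\bG(\overline\cO_j)$ by continuity of conjugation.

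\textbf{Step 1 (local factorization).} The group $\bG$ is simple and admits a proper parabolic $\bP_+$, hence is isotropic; together with the simple-connectedness of $\bG$ (tacit here and present in the application inside Theorem~\ref{th:Chain}), the Kneser--Tits theorem over local fields yields $\bG(\overline K_j) = \langle\,\Ru\bP_+(\overline K_j),\ \Ru\bP_-(\overline K_j)\,\rangle$. Hence for each $j$ we may write $g_j = u_{j,1}\cdots u_{j,n_j}$ with $u_{j,k}\in \Ru\bP_{\epsilon_{j,k}}(\overline K_j)$ for some signs $\epsilon_{j,k}\in\{+,-\}$.

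\textbf{Step 2 (uniform sign pattern).} Consecutive factors of the same sign can be collapsed, so each decomposition may be assumed alternating. Prepending an identity in the opposite unipotent radical aligns the starting sign across all $j$, and appending identities aligns the total length. This yields a common length $n$ and a common sign pattern $\epsilon_1,\ldots,\epsilon_n$.

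\textbf{Step 3 (downward-inductive approximation).} I construct $u_n,u_{n-1},\ldots,u_1$ in reverse order. Assume inductively that $u_{i+1},\ldots,u_n$ have been built so that for every $j$
\[
  g_j\, u_n^{-1}\cdots u_{i+1}^{-1} \;=\; k_{j,n-i}\, u_{j,1}\cdots u_{j,i}, \qquad k_{j,n-i}\in\bG(\overline\cO_j).
\]
Apply Lemma~\ref{lm:ApproxU} to $\bU = \Ru\bP_{\epsilon_i}$ (whose descending central series has additive-group quotients, so the hypothesis holds) with inputs $u_{j,i}$ and a precision $N_i$ to be chosen, producing $h_i\in R$ with $h_i\notin\fp_j$ and $u_i\in \Ru\bP_{\epsilon_i}(R_{fh_i})$ such that $u_{j,i}=\alpha_{j,i}u_i$ with $\alpha_{j,i}\in(\Ru\bP_{\epsilon_i})_j^{(N_i)}$. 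Then
\[
  g_j\, u_n^{-1}\cdots u_i^{-1} \;=\; k_{j,n-i}\, u_{j,1}\cdots u_{j,i-1}\,\alpha_{j,i} \;=\; k_{j,n-i+1}\, u_{j,1}\cdots u_{j,i-1},
\]
where $k_{j,n-i+1} := k_{j,n-i}\cdot(u_{j,1}\cdots u_{j,i-1})\,\alpha_{j,i}\,(u_{j,1}\cdots u_{j,i-1})^{-1}$. Since $u_{j,1}\cdots u_{j,i-1}$ is a fixed element of $\bG(\overline K_j)$, conjugation by it is continuous and $\bG(\overline\cO_j)$ is an open subgroup; choosing $N_i$ large enough (taking the maximum over the finitely many $j$) forces $k_{j,n-i+1}\in\bG(\overline\cO_j)$, and the induction continues. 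Setting $h := h_1\cdots h_n$ (which lies in no $\fp_j$) completes the proof.

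\textbf{Main obstacle.} The non-trivial input is Step~1: that $\bG(\overline K_j)$ is generated by the two opposite unipotent radicals is precisely the Kneser--Tits theorem over local fields, which is where the simply-connected hypothesis is essential. Once this is granted, the delicate part of the argument is arranging the bookkeeping in Step~3 so that errors are absorbed on the correct side; this works because the peel-off proceeds from the right, the intervening factors $u_{j,1}\cdots u_{j,i-1}$ stay fixed throughout a given stage, and Lemma~\ref{lm:ApproxU} provides approximation to arbitrary $N_i$-adic precision.
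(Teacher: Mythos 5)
Your Steps 2 and 3 are sound and essentially reproduce the paper's inductive mechanism: uniformize the sign pattern by padding with identities, then peel off one factor at a time, applying Lemma~\ref{lm:ApproxU} and absorbing the $N$-adic error into $\bG(\overline\cO_j)$ via continuity of conjugation. The only cosmetic difference is that you induct from the right ($u_n,\ldots,u_1$) while the paper inducts from the left ($u_1,\ldots,u_n$); both work.

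The genuine gap is in Step~1, where you invoke ``the Kneser--Tits theorem over local fields'' to write $g_j$ as a bare product of unipotents. There are two problems with this. First, $\overline K_j=\Frac(\overline\cO_j)$ is not a local field: its residue field is $\Frac(R/\fp_j)$, which is an arbitrary (possibly imperfect, possibly of small characteristic) field, and the Kneser--Tits property for complete discretely valued fields with general residue field is not unconditionally known --- that is precisely the subject of Gille's Bourbaki talk, whose main theorem carries restrictions on the residue field. Second, as you note, full Kneser--Tits also requires $\bG$ to be simply-connected, a hypothesis absent from the lemma's statement. The paper sidesteps both issues by citing \cite[fait~4.3(2), lemme~4.5(1)]{Gille:BourbakiTalk} for the \emph{weaker} factorization $g_j\in\bG(\overline\cO_j)\,u^j_1\ldots u^j_n$, with an extra $\bG(\overline\cO_j)$ factor absorbing whatever part of $\bG(\overline K_j)$ is not generated by the unipotent radicals. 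That decomposition is a Bruhat--Tits/Iwasawa-type statement and is robust in the precise setting of the lemma, whereas your Step~1 asks for a strictly stronger result that is not available in the generality required. Since the $\bG(\overline\cO_j)$ factor propagates harmlessly through the induction (exactly as in your Step~3), the weaker factorization is all that is needed, and you should start from it instead of from Kneser--Tits.
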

\begin{proof}
For $n\gg0$ and each $j=1,\ldots,m$ using~\cite[fait~4.3(2), lemme~4.5(1)]{Gille:BourbakiTalk} we find elements $u^j_i\in \Ru\bP_\pm(\overline K_j)$ ($i=1,\ldots,n$) such that for all $j$ we have $g_j\in\bG(\overline\cO_j) u^j_1\ldots u^j_n$. Moreover, inserting 1's into the collection $u^j_i$ we may assume that for each $i$ either $u^1_i,\ldots,u^m_i\in\Ru\bP_+(\overline K_j)$, or $u^1_i,\ldots,u^m_i\in\Ru\bP_-(\overline K_j)$.

It is enough to show that for $0\le l\le n$ there are $h\in R$ and $u_i\in \Ru\bP_\pm(R_{fh})$, $i=1,\dots,l$ such that for all $j$ we have $h\notin\fp_j$ and $u^j_1\ldots u^j_l\in\bG(\overline\cO_j)u_1\ldots u_l$. We induct on $l$, the case $l=0$ being obvious.

Applying the induction hypothesis, we get $h'\in R$ and $u_i\in \Ru\bP_\pm(R_{fh'})$, $i=1,\dots,l-1$ such that for all $j$ we have $h'\notin\fp_j$ and $u^j_1\ldots u^j_{l-1}\in\bG(\overline\cO_j)u_1\ldots u_{l-1}$. As in the proof of Lemma~\ref{lm:ApproxU}, there is an integer $N>0$ such that for all $j$ and all $g\in\bG_j^{(N)}$ we have: $(u^j_1\ldots u^j_{l-1})g(u^j_1\ldots u^j_{l-1})^{-1}\in\bG(\overline\cO_j)$. Applying Lemma~\ref{lm:ApproxU} (which is applicable by~\cite[exp.~XXVI, prop.~2.1]{SGA3-3}) to $u^j_l$, we get $h''\in R$ and $u_l\in \Ru\bP_\pm(R_{fh''})$ such that for all $j$ we have $h''\notin\fp_j$ and $u^j_l=g_ju_l$ with $g_j\in\bG_j^{(N)}$. Take $h=h'h''$.

By our choice of $N$, we have
    \[
        u^j_1\ldots u^j_l=u^j_1\ldots u^j_{l-1}g_ju_l=\left((u^j_1\ldots u^j_{l-1})g_j(u^j_1\ldots u^j_{l-1})^{-1}\right)
        u^j_1\ldots u^j_{l-1}u_l\in\bG(\overline\cO_j)u_1\ldots u_l.
    \qedhere\]
\end{proof}

For $f,h\in R$, let $T:=V(f,h)$ be the closed subset of $\Spec R$ corresponding to the ideal $fR+hR$. Then we have a Zariski cover $\Spec R-T=\Spec R_f\cup\Spec R_h$. Thus, we are in the situation of Section~\ref{sect:Gluing} with $W_1=\Spec R_f$, $W_2=\Spec R_h$. Hence, for $g\in\bG(R_{fh})$ we have a $\bG$-torsor $\Glue(\bG,g)$ over $\Spec R-T$.

\begin{proposition}\label{pr:Away2Div}
  In the notation of Theorem~\ref{th:Chain}, let $\bG$ be a simply-connected simple $R$-group scheme. Then there are $h\in R$ such that $T:=V(f,h)$ is of codimension at least two in $\Spec R$ and $u_1$, \ldots, $u_n$ such that $u_i\in \Ru\bP_\pm(R_{fh})$ and $\cE_{U-T}\simeq\Glue(\bG,u_n\ldots u_1)_{U-T}$.
\end{proposition}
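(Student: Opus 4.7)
The plan is to construct the isomorphism by producing trivializations of $\cE$ on the two opens $\Spec R_f\cap U$ (trivial by hypothesis) and $\Spec R_h\cap U$ (for a suitable $h$) that together cover $U-T$, with transition $w:=u_n\cdots u_1$ on the overlap. Let $\fp_1,\ldots,\fp_m$ be the minimal primes of $R$ containing $f$; this set is finite by Noetherianity. After relabeling, assume $\fp_1,\ldots,\fp_k$ lie in $U$. Each $R_{\fp_j}$ is a DVR by normality, and $\cE|_{\Spec R_{\fp_j}}$ is generically trivial, so the Dedekind case of the Grothendieck--Serre conjecture furnishes a trivialization $\tau_j$ of $\cE$ over $\Spec R_{\fp_j}$ for $j\le k$. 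Both $\tau_j$ and the hypothesized trivialization $\sigma_f$ of $\cE$ on $\Spec R_f\cap U$ restrict to trivializations of $\cE$ over the generic point $\Spec K$ of $\Spec R$, so their difference is an element $g_j\in\bG(K)\subset\bG(\overline K_j)$ with $\sigma_f=\tau_j\cdot g_j$. For $j>k$ put $g_j:=1$.

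Apply Lemma~\ref{lm:ApproxG} to $(g_j)_{j=1}^m$ to obtain $h_0\in R$ with $h_0\notin\fp_j$ for every $j$, together with $v_1,\ldots,v_n\in\Ru\bP_\pm(R_{fh_0})$ such that $g_j\in\bG(\overline\cO_j)\cdot v_1\cdots v_n$. Set $u_i:=v_i^{-1}$ (again in $\Ru\bP_\pm(R_{fh_0})$) and $w:=u_n u_{n-1}\cdots u_1$. Writing $g_j=h_j\,v_1\cdots v_n$ with $h_j\in\bG(\overline\cO_j)$ gives $g_j w=h_j\in\bG(\overline\cO_j)$ for each $j\le k$.

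Consider the candidate section $\sigma_f\cdot w$ of $\cE$ defined on $\Spec R_{fh_0}\cap U$. Over the generic point it equals $\tau_j\cdot(g_jw)$, and $g_jw\in\bG(\overline\cO_j)\cap\bG(K_j)=\bG(R_{\fp_j})$ since $R_{\fp_j}=\overline\cO_j\cap K_j$. Hence $\sigma_f\cdot w$ extends to a section over $\Spec R_{\fp_j}$, and by finite presentation of $\cE$ this spreads to an extension over some open neighborhood of $\fp_j$ in $\Spec R_{h_0}\cap U$. The indeterminacy locus $B$ of $\sigma_f\cdot w$ is therefore closed in $\Spec R_{h_0}\cap U$, disjoint from $\{\fp_1,\ldots,\fp_k\}$, and of codimension at least two in $\Spec R$. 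Its closure $\bar B\subset\Spec R$ is again of codimension $\ge 2$, so no $\fp_j$ lies in $\bar B$, and prime avoidance yields $h_1\in R$ vanishing on $B$ with $h_1\notin\fp_j$ for all $j$. Setting $h:=h_0h_1$, the closed set $T:=V(f,h)$ has codimension $\ge 2$, and $B\subset V(h_1)\subset V(h)$ forces $\sigma_f\cdot w$ to extend to a trivialization $\sigma_h$ of $\cE$ on $\Spec R_h\cap U$. The pair $(\sigma_f,\sigma_h)$ with transition $w$ then glues to the desired isomorphism $\cE_{U-T}\simeq\Glue(\bG,u_n\cdots u_1)_{U-T}$.

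The chief subtlety is the order reversal $u_i:=v_i^{-1}$: Lemma~\ref{lm:ApproxG} places its unipotent factors \emph{on the right} of $\bG(\overline\cO_j)$ (an Iwasawa-type decomposition), whereas the gluing cocycle must bring $g_j$ into $\bG(\overline\cO_j)$ by right multiplication, forcing inversion and reversal of the indices. A secondary point is that the initial $h_0$ only cleans the section up to the generic points of $V(f)\cap U$, leaving a residual codimension-$2$ indeterminacy locus; this is absorbed by the prime-avoidance factor $h_1$.
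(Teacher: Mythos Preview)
Your proof is correct and follows essentially the same strategy as the paper: both use the Dedekind case of Grothendieck--Serre at the height-one primes over $f$, invoke Lemma~\ref{lm:ApproxG} to produce the unipotent factors, and then compare the glued torsor with $\cE$. The only notable difference is in the final extension step. The paper shows that the isomorphism $\cE|_{U-V(f)}\to\Glue(\bG,u_n\cdots u_1)|_{U-V(f)}$ extends across each $\fp_j$ and then invokes the Hartogs principle (normality of $R$) to extend it to all of $U-T$ in one stroke, so no auxiliary factor $h_1$ is needed. Your argument instead spreads the section out to neighborhoods of the $\fp_j$ and absorbs the residual codimension-$2$ indeterminacy into $h$ via prime avoidance; this is a little longer but perfectly valid, and more explicit about where the section actually lives. (A small cosmetic point: the paper applies Lemma~\ref{lm:ApproxG} to $g_j^{-1}$ and renames, which amounts to exactly your inversion $u_i:=v_i^{-1}$.)
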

\begin{proof}
We fix a trivialization $s$ of $\cE$ over $U-V(f)$. Let $\fp_1$, \ldots $\fp_m$ be all the minimal prime ideals of $R$ containing $f$. Let, as above, $\overline\cO_j$ be the completion of $R_{\fp_j}$, and $\overline K_j:=\Frac(\overline\cO_j)$.

By reordering the ideals $\fp_i$, we may assume that $\fp_1,\ldots,\fp_l\in U$, $\fp_{l+1},\ldots,\fp_m\notin U$ for a certain $l$ such that $0\le l\le m$. The semi-localization $R_{\fp_1,\ldots,\fp_l}$ is a Dedekind ring. By~\cite{Guo2019GrSerreDedekind} $\cE$ is trivial over it. We fix a trivialization $s'$. Now, both $s$ and $s'$ give trivializations of $\cE$ over $\overline K_j$, where $1\le j\le l$. Thus, on $\Spec\overline K_j$ we have $s'=sg_j$, where $g_j\in\bG(\overline K_j)$. We set $g_j=1$ for $j>l$. By applying Lemma~\ref{lm:ApproxG} to $g_j^{-1}$, we get $h\in R$ and $u_i\in\Ru\bP_\pm(R_{fh})$, $i=1,\ldots,n$ such that for all $j$ we have $h\notin\fp_j$ and $g_j\in u_n\ldots u_1\bG(\overline\cO_j)$. Set $T:=V(f,h)$ and note that $T$ has codimension at least two in $\Spec R$ because $h$ is not contained in any minimal prime ideal containing $f$. Consider the torsor $\cE':=\Glue(\bG,u_n\ldots u_1)$ over $\Spec R-T$. Since both $\cE$ and $\cE'$ are trivialized over $U-V(f)$, we get an isomorphism $\sigma\colon\cE_{U-V(f)}\xrightarrow{\sim}\cE'_{U-V(f)}$. We claim that $\sigma$ extends to $\fp_j$ for all $1\le j\le l$. Indeed, let us identify $\cE_{\overline\cO_j}$ and $\cE'_{\overline\cO_j}$ with $\bG_{\overline\cO_j}$ using $s'$ and the canonical trivialization respectively. Then $\sigma|_{\overline K_j}$ is identified with the multiplication by $(u_n\ldots u_1)^{-1}g_j$. In more detail, the section $s'|_{\overline K_j}$ of $\cE|_{\overline K_j}$ is sent by $\sigma|_{\overline K_j}$ to $g_j$, where we use the canonical trivialization of $\cE'$ on $U-V(f)$. Thus it is sent to $(u_n\ldots u_1)^{-1}g_j$ if we use the canonical trivialization of $\cE'$ on $\Spec\overline\cO_j$.

Since $(u_n\ldots u_1)^{-1}g_j\in\bG(\overline\cO_j)$, the claim that $\sigma$ extends to $\fp_j$ for all $1\le j\le l$ follows. Now, by Hartogs principle, the isomorphism extends to $U-T$.
\end{proof}

\subsection{Proof of Theorem~\ref{th:Chain}}
Recall that $\bP_\pm\subset\bG$ are opposite proper parabolic subgroup schemes. Let $h$, $T$, and $u_i$ be those provided by Proposition~\ref{pr:Away2Div}. Set $\cE_i:=\Glue(\bG,u_{i-1}\ldots u_1)$. We have $u_i\in\Ru\bP_-(R_{fh})$ or $u_i\in \Ru\bP_+(R_{fh})$. Consider the first case, the second one being completely similar. We note that $\cE_i$ is trivialized over $\Spec R_f$, which gives an isomorphism $\bG_{R_f}\xrightarrow{\sim}\Aut(\cE_i)_{R_f}$. We use this isomorphism to view $(\bP_-)_{R_f}$ as a parabolic $R_f$-subgroup scheme $\bP_i'\subset\Aut(\cE_i)_{R_f}$. Since $\Spec R-T$ is normal and the scheme classifying parabolic subgroup schemes of $\Aut(\cE_i)$ is proper over $\Spec R-T$, $\bP_i'$ extends to a parabolic subgroup scheme $\bP_i\subset\Aut(\cE_i)_{\Spec R-T_i}$, where $T\subset T_i\subset V(f)$ and $T_i$ has codimension at least two in $\Spec R$.

By Lemma~\ref{lm:GlueRed}(ii) applied with $X=\Spec R-T_i$, $W_1:=\Spec R_f$, $W_2:=\Spec R_h-T_i$, $\bH:=\bG_{\Spec R-T_i}$, $\bU:=(\Ru\bP_-)_{R-T_i}$, $g=u_{i-1}\ldots u_1$, $u=u_i$, $\Iso(\cE_i,\cE_{i+1})$ is reduced to $\Ru\bP_i$ over $\Spec R-T_i$. By construction and Proposition~\ref{pr:Away2Div}, $(\cE_n)_{U-T}\simeq\cE_{U-T}$. It remains to replace $T$ with $\bigcup_i T_i$, $\cE_i$ with $(\cE_i)_{\Spec R-T}$, and $\bP_i$ with $(\bP_i)_{\Spec R-T}$. By construction and Lemma~\ref{lm:GlueRed}(ii), the chain thus obtained is trivial over $\Spec R_f$.
\qed

\section{Lifting to a relative curve and descent to \texorpdfstring{$\A^1$}{A1}}\label{sect:Prepare}
The main result of this section is Proposition~\ref{pr:A1}. Theorem~\ref{th:Main2} is a simple corollary of this proposition and of Theorem~\ref{th:A1}.

\subsection{Fibering into curves}
Recall that for a reductive group scheme $\bG$ we have the notion of a unipotent chain, see Definition~\ref{def:UnipChain}. If we choose a pair of opposite parabolic subgroup schemes $\bP_\pm\subset\bG$, we get the notion of a trivial unipotent chain, see Definition~\ref{def:TrivUnipChain}. The following proposition is similar to~\cite[Prop.~4.2]{CesnaviciusGrSerre} (cf.\ also~\cite[Prop.~4.4]{FedorovMixedChar}). The main difference is item~\eqref{item:ext}. The torus $\bT$ is only needed to equate the group schemes later in Section~\ref{sect:Equating}.

\begin{proposition}\label{pr:PrepDescend}
Let $X$ be an integral affine scheme smooth and of positive relative dimension over a semilocal Dedekind domain. Let $\bG$ be a reductive $X$-group scheme with opposite parabolic $X$-subgroup schemes $\bP_\pm\subset\bG$ and a maximal $X$-torus $\bT\subset\bP_-\cap\bP_+$. Let $T\subset Y\subset X$ be closed subsets such that $Y$ is fiberwise of positive codimension over the Dedekind domain and $T$ is of codimension at least two in $X$. Let $\cE$ be a $\bG$-torsor over $X$ such that there is a unipotent chain of $\bG$-torsors over $X-T$ connecting $\bG_{X-T}$ to $\cE_{X-T}$ and assume that the chain is trivial over $X-Y$. Then for a finite set $\bx$ of points of~$X$, there are

\stepzero\noindstep a smooth integral affine $\cO_{X,\bx}$-scheme $C$ of pure relative dimension one;

\noindstep a section $\Delta\in C(\cO_{X,\bx})$;

\noindstep an $\cO_{X,\bx}$-finite closed subscheme $Z\subset C$;

\noindstep\label{eq:d} a reductive $C$-group scheme $\bG'$ with opposite parabolic $C$-subgroup schemes $\bP'_\pm\subset\bG'$ and a maximal $C$-torus $\bT'\subset\bP'_-\cap\bP'_+$ such that the $\Delta$-pullback of the data $\bT',\bP'_\pm,\bG'$ is identified with the restriction of $\bT,\bP_\pm,\bG$ to $\cO_{X,\bx}$;

\noindstep a $\bG'$-torsor $\cE'$ such that $\Delta^*\cE'\approx\cE_{\cO_{X,\bx}}$, where the isomorphism makes sense in view of the identifications in part~\eqref{eq:d}; and

\noindstep\label{item:ext} a unipotent chain of $\bG'$-torsors over $C-Z$ connecting $\bG'_{C-Z}$ to $\cE'_{C-Z}$ that is trivial (with respect to $\bP'_\pm$) away from an $\cO_{X,\bx}$-quasi-finite subscheme of $C$.
\end{proposition}
\begin{proof}
The argument is similar to that of~\cite[Prop.~4.2]{CesnaviciusGrSerre}. In more detail, set $U:=\Spec\cO_{X,\bx}$. Denote the semilocal Dedekind domain by $\cO$. Applying~\cite[Prop.~4.1]{CesnaviciusGrSerre}, we obtain an affine open subscheme $X'\subset X$ containing $\bx$, an affine open subscheme $S\subset\A_{\cO}^{\dim X-2}$, and a smooth morphism $X'\to S$ of pure relative dimension one such that $T\cap X'$ is finite over $S$. Since $Y$ is fiberwise of positive codimension over the Dedekind domain, using a standard argument one can arrange that also $Y\cap X'$ is quasi-finite over $S$ (see, for example,~\cite[Prop.~2.3]{FedorovCesnavicius}).

We now let $C:=X'\times_SU$ and $Z:=(T\cap X')\times_SU$. We let $\Delta\colon U\to C$ be the diagonal section. Now we replace $C$ with its connected component containing $\Delta(U)$ and replace $Z$ by the intersection with this component. Let $\bG'$, $\bP'_\pm$, $\bT'$, and $\cE'$ be the pullbacks of $\bG$, $\bP_\pm$, $\bT$, and $\cE$ under the composition $C\to X'\hookrightarrow X$. Let the unipotent chain over $C-Z$ be the pullback of the unipotent chain over $X-T$ (see Lemma~\ref{lm:ChainPullback}). The chain is trivial away from the $\cO_{X,\bx}$-quasi-finite subscheme $(Y\cap X')\times_SU$. All the conditions of the proposition are now satisfied by construction.
\end{proof}

\subsection{Equating the group schemes}\label{sect:Equating} We will later show that in Proposition~\ref{pr:PrepDescend} we may assume that the group scheme $\bG'$ is the pullback of $\bG$ under the projection $C\to\Spec\cO_{X,\bx}$. To that end, we will use Proposition~\ref{pr:EquatingGroups} below. We start with some preliminaries.

\begin{lemma}\label{lm:FibDense}
  Let $R$ be an integral domain and let $\bT\subset\gmR^n$ be a subtorus. Let $\P_R^n$ be the standard compactification of $\gmR^n\subset\A_R^n$. Let $\overline\bT$ be the closure of $\bT$ in $\P_R^n$. Then $\bT$ is fiberwise dense in $\overline\bT$.
\end{lemma}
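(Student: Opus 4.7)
The plan is to reduce to the case of a split torus via étale descent, and then to compute the scheme-theoretic closure directly using monoid algebras.

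First, I would use that every $R$-torus $\bT$ splits after some finite étale base change $R \to R'$, which is faithfully flat. Since $\bT \hookrightarrow \P_R^n$ is a quasi-compact immersion, scheme-theoretic closure commutes with this flat base change, so $\overline{\bT} \times_R R'$ is identified with the closure of $\bT_{R'}$ in $\P_{R'}^n$. For any $s \in \Spec R$ there is $s' \in \Spec R'$ lying over $s$, and since the resulting field extension $k(s) \to k(s')$ is faithfully flat, topological density of the open immersion $\bT_s \hookrightarrow \overline{\bT}_s$ is equivalent to that of its base change $\bT_{s'} \hookrightarrow \overline{\bT}_{s'}$. Thus it suffices to handle the case where $\bT \cong \gmR^d$ is split and embedded in $\gmR^n$ via characters $\chi_1,\ldots,\chi_n \in X^*(\bT) = \Z^d$ that generate $\Z^d$ as a group.

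Next I would compute $\overline{\bT}$ in each standard affine chart $U_i = \{z_i \ne 0\} \cong \A_R^n$ of $\P_R^n$; these charts cover $\overline{\bT}$ because $\bT \subset \gmR^n \subset U_i$ for every $i$. Setting $\chi_0 := 0$, the embedding $\bT \hookrightarrow U_i$ is monomial with characters $\chi_j - \chi_i$ for $0 \le j \le n$, $j \ne i$, so its scheme-theoretic closure is $\Spec R[P_i]$, where $P_i \subset \Z^d$ is the finitely generated submonoid generated by these differences. The monoid algebra $R[P_i]$ is $R$-free with basis $P_i$, hence $R$-flat, and each fiber $k(s)[P_i]$ embeds into the Laurent polynomial ring $k(s)[\Z^d]$, so it is an integral domain. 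Gluing the charts shows that $\overline{\bT}$ is flat over $\Spec R$ with geometrically integral fibers of dimension $d$, and then the nonempty open subscheme $\bT_s \subset \overline{\bT}_s$ is automatically dense.

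The delicate point is the descent step: one has to check that scheme-theoretic closure commutes with the flat base change $\Spec R' \to \Spec R$, and that topological density of an open immersion is both preserved and reflected by the faithfully flat field extension $k(s) \to k(s')$. Both are standard consequences of quasi-compactness of the immersion and faithful flatness, but they are the technical bedrock on which the reduction rests.
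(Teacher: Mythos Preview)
Your proof is correct and its core is essentially the paper's argument: compute the closure of $\bT$ in each standard affine chart $U_i$ as $\Spec R[P_i]$ for the appropriate submonoid $P_i\subset X^*(\bT)$, and observe that every fiber $k(s)[P_i]$ is an integral domain (being a subring of the Laurent polynomial ring), so $\bT_s$ is dense in each irreducible fiber $(\overline\bT\cap U_i)_s$. The \'etale descent you perform first is, however, unnecessary: $\bT$ is already assumed to be a subtorus of the split torus $\gmR^n$ over the connected base $\Spec R$, and subtori of diagonalizable group schemes are diagonalizable, so $\bT$ is split from the outset and you may go straight to the monoid-algebra computation as the paper does.
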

\begin{proof}
Consider the standard affine cover $\P_R^n=\bigcup_{i=0}^n U_i$, where $U_i\approx\A_R^n$. Note that $\bT\subset\gmR^n\subset U_i$. It is enough to show that $\bT$ is fiberwise dense in its closure in $U_i$ for all $i$. Denote this closure by $\overline\bT_i$.

Fix $i$ such that $0\le i\le n$. We identify the character lattice of $\gmR^n$ with $\Z^n$ in such a way that $U_i=\Spec R[\Z_{\ge0}^n]$. Let $\Lambda:=\Hom_R(\bT,\gmR)$ be the character lattice of $\bT$ (this is a free abelian group). The embedding $\bT\hookrightarrow\gmR^n$ corresponds to a surjective homomorphism $\pi\colon\Z^n\to\Lambda$. Let $X:=\Spec R[\pi(\Z_{\ge0}^n)]$. Then $X$ is a closed integral subscheme of $U_i$ containing $\bT$. Since $\pi(\Z_{\ge0}^n)$ generates $\Lambda$, we see that $\bT$ is open in $X$, so that $X=\overline\bT_i$.

Similarly, for any point $u\in\Spec A$, we see that $X_u=(\overline\bT_i)_u$ is the closure of $\bT_u$ in $\A^n_u$ so that $\bT_u$ is dense in $(\overline\bT_i)_u$.
\end{proof}

The following proposition about tori over normal semilocal schemes may be of independent interest.
\begin{proposition}\label{pr:TorTorsors}
Let $W$ be an affine integral normal Noetherian scheme and let $\bT$ be a~$W$-torus. Let $\cT$ be a $\bT$-torsor. Then

\stepzero\noindstep\label{item:torus1} $\bT$ has a $\bT$-equivariant fiberwise compactification, that is, there is a projective $W$-scheme $\overline\bT$ such that $\bT$ is a fiberwise dense open subscheme of $\overline\bT$ and there is an action of $\bT$ on $\overline\bT$ extending the action on itself by multiplication;

\noindstep\label{item:torus2} similarly, $\cT$ has a fiberwise compactification, that is, there is a projective $W$-scheme $\overline\cT$ such that~$\cT$ is a fiberwise dense open subscheme of $\overline\cT$;

\noindstep\label{item:torus3} if $W$ is semilocal, $U$ is a closed subscheme of $W$, and $\delta\colon U\to\cT$ is a section of $\cT\to W$, then there is a closed subscheme $\widetilde W\subset\cT$ finite and \'etale over $W$ and containing $\delta(U)$.
\end{proposition}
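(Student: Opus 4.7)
My plan is to handle (a), (b), (c) in order. For (a), any torus over a Noetherian normal base is isotrivial --- its character sheaf is an \'etale-locally constant sheaf of free abelian groups of finite rank, so a connected finite \'etale cover $W'\to W$ splits $\bT$. The unit of adjunction gives a closed embedding $\bT\hookrightarrow\Res_{W'/W}(\gm{W'}^n)$, and the target embeds into the projective $W$-scheme $\Res_{W'/W}((\P^1_{W'})^n)$ (Weil restriction along a finite \'etale cover preserves projectivity). I~would take $\overline\bT$ to be the scheme-theoretic image of $\bT$ here. The multiplicative action of $\gm{W'}$ on itself extends to $\P^1_{W'}$, so after Weil restriction the ambient quasi-trivial torus extends its action to the ambient compactification; restriction to $\bT$ gives the desired equivariant action preserving $\overline\bT$. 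Fiberwise density is a local question which, after pullback to the splitting cover, reduces to Lemma~\ref{lm:FibDense}.

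For (b), I~would form the contracted product $\overline\cT:=\cT\times^\bT\overline\bT$ using the $\bT$-action from (a). Since $\overline\bT$ carries a $\bT$-linearized ample line bundle (after replacing by a power), fppf descent along the torsor $\cT\to W$ produces a projective $W$-scheme representing this quotient, with the descended ample line bundle giving projectivity; the $\bT$-equivariant open embedding $\bT\hookrightarrow\overline\bT$ twists to a fiberwise dense open embedding $\cT=\cT\times^\bT\bT\hookrightarrow\overline\cT$.

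For (c), I~would fix a projective embedding $\overline\cT\hookrightarrow\P^N_W$, set $n:=\dim\bT$, and cut $\overline\cT$ by $n$ hyperplane sections $H_1,\ldots,H_n$ drawn from a sufficiently positive linear system $|\cO(d)|$. These sections should be chosen so that each $H_i\supset\delta(U)$, so that the schematic intersection $\widetilde W:=H_1\cap\cdots\cap H_n\cap\overline\cT$ avoids the boundary $\overline\cT-\cT$ at every closed point of $W$, and so that $\widetilde W$ is $W$-smooth of relative dimension zero along $\delta(U)$ and quasifinite over $W$. Projectivity of $\overline\cT$ upgrades quasifinite to finite, and smooth of relative dimension zero is precisely \'etale. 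The main obstacle is organizing (c): a single choice of $n$ sections of $\cO(d)$ must simultaneously force passage through $\delta(U)$, avoidance of the boundary, and the correct relative dimension. The constraint $H_i\supset\delta(U)$ carves out a linear subspace of global sections, and one must verify that generic members of this subspace still satisfy Bertini-type smoothness and avoid the boundary at every closed point of $W$; semilocality of $W$ reduces this to finitely many open conditions, achievable by combining prime avoidance in the semilocal ring with Bertini-type arguments on the residue fields, in the spirit of~\cite{PoonenOnBertini} and~\cite{CesnaviciusGrSerre}.
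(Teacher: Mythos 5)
Your proposal is correct in outline, but part (a) follows a genuinely different route from the paper, and there are a couple of imprecisions worth flagging. For (a), the paper does not invoke isotriviality at all: it applies Thomason's resolution theorem to get a closed embedding $\bT\hookrightarrow\GL_{n,W}$ and then takes the closure of $\bT$ inside the standard compactification $\P(\gl_{n,W}\oplus\cO_W)$, on which $\GL_n$ (hence $\bT$) acts. Your version instead uses that $\bT$ is split by a finite \'etale cover $W'\to W$, embeds $\bT$ into $\Res_{W'/W}(\gm{W'}^n)$ via the unit of adjunction, and compactifies inside $\Res_{W'/W}((\P^1_{W'})^n)$. That also works, but it forces you to (1) justify isotriviality (fine over a normal Noetherian base, via profiniteness of $\pi_1^{\et}$), (2) know that Weil restriction along a finite flat cover preserves projectivity, and (3) prove a version of Lemma~\ref{lm:FibDense} adapted to the multi-$\P^1$ compactification $(\P^1)^m$ rather than the $\P^m$ compactification the lemma is actually stated for --- your claim that fiberwise density ``reduces to Lemma~\ref{lm:FibDense}'' is not quite literal, though the needed variant is proved by the same monoid-algebra computation on each affine toric chart. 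The paper's $\GL_n$-based construction avoids all three of these side issues, which is why it is preferred.

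For (b), both you and the paper form $\overline\cT:=\cT\times^\bT\overline\bT$; the paper gets projectivity concretely by the closed embedding $\overline\cT\hookrightarrow\P(\cT\times^\bT\cV)$ with $\cV=\gl_{n,W}\oplus\cO_W$ (a projective bundle over $W$), whereas you argue by descent of a $\bT$-linearized ample bundle; both are fine. For (c), your plan matches the paper's: cut iteratively by hypersurfaces from $|\cO(d)|$ passing through $\delta(U)$, using Bertini over the residue fields of the closed points (including Poonen's version over finite fields), lift to $W$ by affineness/semilocality, and track a strict dimensional inequality between the intersection with $\cT$ and with the boundary at each stage so that the final $\widetilde W$ misses $\overline\cT-\cT$. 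One slip in your writeup: you only require $\widetilde W$ to be $W$-smooth of relative dimension zero ``along $\delta(U)$,'' but to conclude $\widetilde W\to W$ is \'etale you need this at every point of $\widetilde W$ (equivalently, transversality of each cut on all of the current intersection with $\cT$, as in the paper's iteration), not merely near $\delta(U)$.
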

\begin{proof}
\eqref{item:torus1} Since $W$ is affine, Noetherian, and normal, by~\cite[Cor.~3.2(3)]{ThomasonResolution} we have an embedding $\bT\to\GL_{n,W}$ for some $n>0$. Put $N:=n^2$, and let $\P_W^N=\P(\gl_{n,W}\oplus\cO_W)$ be the standard compactification of $\GL_{n,W}$. Let $\overline\bT$ be the closure of $\bT$ in $\P_W^N$. We need to check that $\bT$ is fiberwise dense in $\overline\bT$. This is enough to check on a finite \'etale cover because the connected components of such a cover are integral by~\cite[Tag0BQL]{StacksProject} and normal by descent~\cite[Tag034F]{StacksProject}. Hence, by~\cite[Exp.~X, thm.~5.16]{SGA3-2} we may assume that $\bT$ is a diagonalizable (=split) torus. Thus, we may assume that $\bT$ is contained in the torus $\gmW^n$ of diagonal matrices and the statement follows from Lemma~\ref{lm:FibDense}.

\eqref{item:torus2} Notations as in the previous proof, note that $\bT$ acts on the vector bundle $\cV:=\gl_{n,W}\oplus\cO_W$, so we have a closed embedding
\[
    \overline\cT:=\cT\times^\bT\overline\bT\hookrightarrow\P(\cT\times^\bT\cV).
\]
Now $\overline\cT$ is the required compactification. (These statements can be checked \'etale locally, so we may assume that $\cT$ is a trivial $\bT$-torsor, in which case these are obvious.)

\eqref{item:torus3} From the previous part we obtain $\cT\subset\overline\cT\subset\P_W^N$. Let $w\in W$ be a closed point. Using Bertini's Theorem (see~\cite{PoonenOnBertini} and~\cite[exp.~XI, thm.~2.1(ii)]{SGA4-3}), we see that for large $d$ there is a degree $d$ hypersurface $H_{1,w}$ in $\P_w^N$
intersecting $\cT_w$ transversally and such that $\dim(H_{1,w}\cap(\overline\cT_w-\cT_w))<\dim(H_{1,w}\cap\cT_w)$. If $w\in U$, we may arrange it so that $\delta(w)\in H_{1,w}$. In more detail, if the residue field of $w$ is finite, we apply~\cite[Prop.~3.12]{FedorovMixedChar} with $T_1:=\overline\cT_w-\cT_w$, $T':=\emptyset$, $T:=\cT_w$, $F:=\{\delta(w)\}$ if $w\in U$ and $F:=\emptyset$ otherwise. (We remark that the required hypersurface in~\cite[Prop.~3.12]{FedorovMixedChar} exists for all large enough $d$ as is clear from its proof.) If the residue field of $w$ is infinite, we write $\cT_0:=\cT_w$, stratify $\overline\cT_w-\cT_w$ as $\bigsqcup_{i=1}^n\cT_i$, where $\cT_i$ are smooth, and for $i=0,\ldots,n$ apply~\cite[exp.~XI, thm.~2.1(ii)]{SGA4-3}) with $V':=\cT_i$, $V$ being the closure of $V'$, $P:=\delta(w)$ if $w\in U$ and $P$ being any rational point of $\P_W^N$ otherwise.

Since $W$ is semilocal and affine, we can lift the hypersurfaces $H_{1,w}$, where $w$ ranges over the (finitely many) closed points of~$W$, to a hypersurface $H_1\subset\P_W^N$ containing $\delta(U)$.

Next, we find a hypersurface $H_2\subset\P_W^N$ intersecting $H_1\cap\cT$ transversally, containing $\delta(U)$, and such that for all closed points $w$ in $W$ the dimension of $H_{2,w}\cap H_{1,w}\cap(\overline\cT_w-\cT_w)$ is smaller than dimension of $H_{2,w}\cap H_{1,w}\cap\cT_w$. Repeating this procedure, we find a closed subscheme $\widetilde W\subset\overline\cT$ finite over $W$ (being projective and quasi-finite) and such that $\widetilde W\supset\delta(U)$. Moreover, the dimensional inequalities show that $\widetilde W$ does not intersect the infinity divisor $\overline\cT-\cT$, so $\widetilde W\subset\cT$. By construction, the projection $\widetilde W\to W$ is \'etale at the closed points. Since $W$ is semilocal, $\widetilde W\cap\cT$ is \'etale over $W$. (Cf.~\cite[Prop.~4.1]{FedorovPanin} and~\cite[Lm.~4.3]{PaninNiceTriples}.)
\end{proof}

The following proposition is in the vein of~\cite[Prop.~5.1]{PaninStavrovaVavilov}, \cite[Thm.~4.1]{PaninNiceTriples}, and~\cite[Lm.~5.1]{CesnaviciusGrSerre}.

\begin{proposition}\label{pr:EquatingGroups}
Let $W$ be a semilocal normal Noetherian affine scheme. Assume that $\bG_1$ and $\bG_2$ are reductive $W$-group schemes of the same type and $\bP_i^\pm\subset\bG_i$ are pairs of opposite parabolic $W$-subgroup schemes of the same type. Let $\bT_i\subset\bP^-_i\cap\bP^+_i$ be maximal $W$-tori, $i=1,2$. Assume that $U\subset W$ is a~closed subscheme and $\iota\colon(\bG_1)_U\to(\bG_2)_U$ is an isomorphism such that $\iota$ sends $(\bP_1^\pm)_U$ isomorphically onto $(\bP_2^\pm)_U$ and $(\bT_1)_U$ isomorphically onto $(\bT_2)_U$.

Then there is a finite \'etale cover $\pi\colon\widetilde W\to W$ with a section $\delta\colon U\to\widetilde W$ and an isomorphism $\tilde\iota\colon(\bG_1)_{\widetilde W}\to(\bG_2)_{\widetilde W}$ such that $\delta^*\tilde\iota=\iota$ and $\tilde\iota$ sends $(\bP_1^\pm)_{\widetilde W}$ isomorphically onto $(\bP_2^\pm)_{\widetilde W}$ and $(\bT_1)_{\widetilde W}$ isomorphically onto $(\bT_2)_{\widetilde W}$.
\end{proposition}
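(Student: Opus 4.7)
The plan is to package all of $(\iota, \bT, \bP^\pm)$ into a single isomorphism scheme over $W$ and then extract the desired cover from its structure as a tower of a finite \'etale piece over a torus torsor. Let $\mathcal{I}$ be the $W$-scheme whose $V$-points are the isomorphisms $(\bG_1)_V \to (\bG_2)_V$ carrying $(\bP_1^\pm, \bT_1)$ to $(\bP_2^\pm, \bT_2)$; by the standard representability results on transporter and isomorphism schemes of reductive group schemes, $\mathcal{I}$ is representable by an affine, finitely presented $W$-scheme. The corresponding automorphism functor $\mathcal{A} := \Aut_W(\bG_1, \bP_1^\pm, \bT_1)$ is a $W$-group scheme making $\mathcal{I}$ a right $\mathcal{A}$-torsor, and $\iota$ provides a section of $\mathcal{I}|_U \to U$.

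The key structural input is that the adjoint torus $\bT_1^{\mathrm{ad}} := \bT_1/Z(\bG_1)$, acting on $\mathcal{A}$ and on $\mathcal{I}$ by conjugation, is a closed normal subgroup scheme of $\mathcal{A}$ whose quotient $\mathcal{A}/\bT_1^{\mathrm{ad}}$ is finite \'etale over $W$. This may be verified \'etale-locally after splitting $\bG_1$, where one identifies $\mathcal{A}$ with $\bT^{\mathrm{ad}}$ extended by the finite constant group generated by (i) Weyl elements of the Levi $\bP^+\cap\bP^-$ that normalize the data, and (ii) diagram automorphisms preserving the type of $\bP^\pm$. Consequently the fppf quotient $\mathcal{I}_0 := \mathcal{I}/\bT_1^{\mathrm{ad}}$ is a torsor under $\mathcal{A}/\bT_1^{\mathrm{ad}}$, hence finite \'etale over $W$, and $\iota$ descends to a section $\bar\iota\colon U \to \mathcal{I}_0$.

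Since finite \'etale covers preserve normality and the semilocal affine property, the scheme $\widetilde W_1 := \mathcal{I}_0$ is a normal Noetherian semilocal affine scheme. The section $\bar\iota$ is a closed immersion: it factors through the pullback $\mathcal{I}_0|_U \to \mathcal{I}_0$, which is a closed immersion (as the base change of $U \hookrightarrow W$ along a finite morphism), preceded by a section of a finite \'etale morphism to $U$. The projection $\mathcal{I} \to \mathcal{I}_0$ now realizes $\mathcal{I}$ as a torsor over $\widetilde W_1$ under the pullback of the torus $\bT_1^{\mathrm{ad}}$, and the original isomorphism $\iota$ is a section of this torsor over the closed subscheme $\bar\iota(U) \subset \widetilde W_1$. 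Applying Proposition~\ref{pr:TorTorsors}\eqref{item:torus3} to this torus-torsor data produces a closed subscheme $\widetilde W \subset \mathcal{I}$ finite \'etale over $\widetilde W_1$ and containing $\iota(U)$.

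The composition $\widetilde W \to \widetilde W_1 \to W$ is then a finite \'etale cover; the inclusion $\iota(U) \hookrightarrow \widetilde W$ furnishes the required section $\delta\colon U \to \widetilde W$; and the tautological morphism $\widetilde W \hookrightarrow \mathcal{I}$ corresponds, by the defining property of $\mathcal{I}$, to an isomorphism $\tilde\iota\colon(\bG_1)_{\widetilde W} \to (\bG_2)_{\widetilde W}$ respecting tori and parabolics, with $\delta^*\tilde\iota = \iota$ by construction. The principal difficulty in this plan is the structural claim about $\mathcal{A}/\bT_1^{\mathrm{ad}}$: once it is established, the remainder of the argument is a bookkeeping reduction that isolates a torus-torsor situation suitable for Proposition~\ref{pr:TorTorsors}\eqref{item:torus3}.
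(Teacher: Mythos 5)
Your proposal follows essentially the same route as the paper: identify the scheme of compatible isomorphisms as a torsor under the automorphism group preserving the data, observe that this group is an extension of an \'etale locally constant finite group scheme by the adjoint torus, pass to the quotient by the torus, and apply Proposition~\ref{pr:TorTorsors}\eqref{item:torus3} to the resulting torus torsor. The one cosmetic difference is that you assert the quotient $\mathcal{A}/\bT_1^{\mathrm{ad}}$ (hence $\mathcal{I}_0$) is finite \'etale over $W$ outright, whereas the paper only records that its connected components are finite (via SGA3, exp.~X, cor.~5.14) and passes to the finitely many components meeting $\delta'(U)$ before invoking the torus-torsor lemma; your stronger claim is correct, since an \'etale locally constant sheaf of finite sets on a Noetherian base is representable by a finite \'etale scheme, but it would be worth spelling out that "verified \'etale-locally" here means fppf descent of finiteness along a quasi-compact \'etale cover rather than an \'etale-local-on-the-source check.
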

\begin{proof}
Let $\bG_1^{\ad}$ be the adjoint group scheme of $\bG_1$ and let $\bT_1^{\ad}$ be the image of $\bT_1$ in $\bG_1^{\ad}$.
Let $\bA$ be the group scheme of automorphisms of $\bG_1$ preserving $\bP^\pm_1$ and $\bT_1$. Then it is easy to derive from~\cite[exp.~XXIV, prop.~2.1]{SGA3-3} and~\cite[exp.~XXIV, thm.~1.3(ii)]{SGA3-3} that $\bA$ is an extension of an \'etale locally constant group scheme by $\bT^{\ad}_1$.

Let $\widetilde I$ be the scheme of isomorphisms $\bG_1\to\bG_2$ taking $\bP^\pm_1$ isomorphically onto $\bP^\pm_2$ and $\bT_1$ isomorphically onto $\bT_2$. Then $\widetilde I$ is an $\bA$-torsor over $W$ (use~\cite[exp.~XXVI, prop.~1.3]{SGA3-3}). Thus $I:=\widetilde I/\bT_1^{\ad}$ is \'etale locally constant over $W$. Note that $\iota$ gives a section $\delta\colon U\to\widetilde I$. Let $\delta'\colon U\to I$ be the composition of $\delta$ with the projection to $I$.

By~\cite[exp.~X, cor.~5.14]{SGA3-2} the connected components of $I$ are finite over $W$. They are also \'etale over $W$. Note that $U':=\delta'(U)$ intersects only finitely many components, denote their union by $I'$, so that $I'$ is finite and \'etale over $U$ and $U'\subset I'$. Then $\delta$ decomposes as $\delta''\circ\delta'$, where $\delta''\colon U'\to\widetilde I$. Applying Proposition~\ref{pr:TorTorsors}\eqref{item:torus3} to the $\bT_1^{\ad}$-torsor $I'\times_I\widetilde I\to I'$ and its section $\delta''$, we get a subscheme $\widetilde W\subset I'\times_I\widetilde I$ finite and \'etale over $W$ and containing $\delta''(U')=\delta(U)$. Since $\widetilde W$ is a subscheme of $\widetilde I$, we get an isomorphism $\tilde\iota\colon(\bG_1)_{\widetilde W}\to(\bG_2)_{\widetilde W}$ sending $(\bP_1^\pm)_{\widetilde W}$ isomorphically onto $(\bP_2^\pm)_{\widetilde W}$ and $(\bT_1)_{\widetilde W}$ isomorphically onto $(\bT_2)_{\widetilde W}$. By construction, $\delta^*\tilde\iota=\iota$.
\end{proof}

\subsection{Preparation to the descent} We now show that in Proposition~\ref{pr:PrepDescend} we may assume that the group scheme $\bG'$ is the pullback of $\bG$ under the projection $C\to\Spec\cO_{X,\bx}$. We also change our relative curve $C$ in such a way that the torsor can be descended to $\A^1_{\cO_{X,\bx}}$. We need one more preliminary proposition.

\begin{proposition}\label{pr:prepare0}
For
\begin{itemize}
  \item a semilocal Noetherian ring $R$;
  \item an affine smooth $R$-scheme $C$ of pure relative dimension one;
  \item closed subschemes $T\subset Y\subset C$ such that $T$ is $R$-finite and $Y$ is $R$-quasi-finite; and
  \item a section $\Delta\in T(R)$;
\end{itemize}
there are
\begin{itemize}
     \item an \'etale $R$-morphism $\widetilde C\to C$ such that $T\times_C\widetilde C$ is $R$-finite;
     \item a lift $\widetilde\Delta\in\widetilde C(R)$ of $\Delta$;
     \item an \'etale $R$-morphism $\widetilde C\to W$, where $W$ is an open subset of $\A^1_R$, that maps $Y\times_C\widetilde C$ isomorphically onto a closed subscheme $Y'\subset W$ such that $Y\times_C\widetilde C\simeq Y'\times_W\widetilde C$.
\end{itemize}
\end{proposition}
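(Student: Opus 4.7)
The plan is as follows. First, the first bullet becomes automatic once we build $\widetilde C$ as a Zariski open subscheme of $C$ containing $T$: then $T\times_C\widetilde C=T$ is $R$-finite by hypothesis, and $\widetilde\Delta:=\Delta$ serves as the lift. So the substantive task is to produce a Zariski open $\widetilde C\subset C$ containing $T$, together with an étale $R$-morphism $\widetilde C\to W\subset\A^1_R$ inducing an isomorphism $Y\cap\widetilde C\xrightarrow{\sim}Y'$ onto a closed subscheme $Y'\subset W$ and satisfying $Y'\times_W\widetilde C=Y\cap\widetilde C$.

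The device is a function $t\in\Gamma(C,\cO_C)$ whose associated morphism $t\colon C\to\A^1_R$ is étale at every closed-fiber point of $Y$ (in particular at every point of $T$ over a maximal ideal of $R$) and which separates the distinct closed-fiber points of $Y$. Since $C\to\Spec R$ is smooth of pure relative dimension one and $Y$ is $R$-quasi-finite, at each of the finitely many closed points $y\in Y_\fm$ (for $\fm\subset R$ a maximal ideal) the local ring $\cO_{C_\fm,y}$ is a discrete valuation ring in which one picks a uniformizer. The Chinese Remainder Theorem for the semilocal Noetherian $R$, combined with weak approximation on the affine curve $C$ and a Gabber--Poonen-style polynomial adjustment to handle small residue fields, glues these local choices into a global $t$.

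Let $U\subset C$ be the étale locus of $t$. By construction $U$ contains every closed-fiber point of $Y$; since étaleness is an open condition, it contains every generalization in $C$ of such a point. In particular $T\subset U$ (every point of $T$, by $R$-finiteness, specializes to a point of $T$ lying over a maximal ideal of $R$), so $\Delta(\Spec R)\subset U$. Set $W^0:=t(U)\subset\A^1_R$, open because $t|_U$ is étale. The restriction $t|_{Y\cap U}\colon Y\cap U\to W^0$ is unramified and $W^0$-quasi-finite, so by Zariski's Main Theorem it factors as $Y\cap U\hookrightarrow\overline{Y\cap U}\xrightarrow{\pi}W^0$ with $\pi$ finite (the Nagata hypothesis guaranteeing the compactification is well-behaved). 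Setting $W:=W^0\setminus\pi(\overline{Y\cap U}\setminus(Y\cap U))$, and further shrinking if necessary to remove the closed locus where the induced fibered map fails the residue-field condition for a closed immersion --- a locus avoiding the closed-fiber points of $Y$ by the choice of $t$ --- the base change $(Y\cap U)\times_{W^0}W\to W$ is a closed immersion onto a closed subscheme $Y'\subset W$, and the images of $T$ and $\Delta(\Spec R)$ remain in $W$.

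Finally, set $\widetilde U:=U\cap t^{-1}(W)$, open in $C$ and containing $T$ and $\Delta(\Spec R)$. On $\widetilde U$ the preimage $t^{-1}(Y')$ is étale over $Y'$, and the isomorphism $Y\cap\widetilde U\xrightarrow{\sim}Y'$ exhibits $Y\cap\widetilde U$ as a section of this étale morphism, hence as an open-and-closed subscheme of $t^{-1}(Y')$. Taking $\widetilde C:=\widetilde U\setminus\bigl(t^{-1}(Y')\setminus(Y\cap\widetilde U)\bigr)$, which is open in $C$ and still contains $T$ and $\Delta(\Spec R)$, the composition $\widetilde C\to W$ is étale, sends $Y\times_C\widetilde C=Y\cap\widetilde C$ isomorphically onto $Y'$, and satisfies $Y'\times_W\widetilde C=Y\cap\widetilde C$. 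The main obstacle is the construction of $t$: producing a single global regular function on the affine curve $C$ that is simultaneously a uniformizer at the finitely many prescribed closed-fiber points of $Y$ and separates them. The semilocal and Noetherian hypotheses underlie the Chinese-Remainder/approximation step, the Nagata hypothesis is what lets Zariski's Main Theorem produce a useful finite compactification, and the Gabber--Poonen-style perturbation handles small residue fields.
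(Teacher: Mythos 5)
Your proposal has a genuine gap at the very first step: you assert that $\widetilde C$ can be taken to be a Zariski open subscheme of $C$ containing $T$, so that $\widetilde C\to C$ is an open immersion. This is not true in general. The obstruction is a counting obstruction over small finite residue fields of $R$: if $x\in\Spec R$ is a closed point with $k(x)=\mathbb F_q$ and the fiber $T_x$ (which is contained in $Y\cap\widetilde C$, since $T$ must remain inside $\widetilde C$) has, say, $q+1$ closed points of degree one, then there is no closed immersion of $T_x$ into $\A^1_x$, since $\A^1_{\mathbb F_q}$ has only $q$ rational points. So no function $t$ of the kind you propose can exist, and no amount of Bertini-over-finite-fields or ``Gabber--Poonen-style polynomial adjustment'' will produce it --- those techniques construct sections with prescribed local behavior but cannot evade this cardinality constraint, which is intrinsic to the geometry of $T_x$ inside $C_x$.

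The paper's proof handles exactly this issue as its first move: it invokes~\cite[Lm.~6.1]{CesnaviciusGrSerre} to produce a finite $R$-morphism $\phi\colon C'\to C$, étale in a neighborhood of $T':=T\times_CC'$, along which $\Delta$ lifts and such that the fibers of $T'$ satisfy the counting condition~\eqref{eq:cond} (strictly fewer points of each degree $d$ than $\A^1_x$ has). The Nagata hypothesis on $R$ is used there, not for a Zariski-Main-Theorem compactification as you suggest. Only after this preliminary finite (non-open) cover --- which is why the statement's first bullet is an étale $R$-morphism $\widetilde C\to C$ rather than an open immersion --- does one run the argument of~\cite[Prop.~4.4]{GuoLiuGrSerre} in the case $d=1$ to extract the étale map $\widetilde C\to W\subset\A^1_R$ matching $Y\times_C\widetilde C$ with a closed $Y'\subset W$. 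The rest of your outline (building $t$, carving out $W$ and $\widetilde C$ so the map restricts to an isomorphism on $Y$ and so that $Y'\times_W\widetilde C=Y\cap\widetilde C$) is the right shape for the second stage, but it cannot start from an open $\widetilde C\subset C$; the finite étale enlargement must come first.
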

\begin{proof}
Applying~\cite[Lm.~6.1]{CesnaviciusGrSerre} we get a finite $R$-morphism $\phi\colon C'\to C$ such that $\phi$ is \'etale in a~neighborhood of $T':=T\times_CC'$, $\Delta$ lifts to $\Delta'\in T'(R)$, and for all closed points $x$ of $\Spec R$ we have
\begin{equation}\label{eq:cond}
    \#\{z\in T'_x\colon [k(z):k(x)]=d\}<\#\{z\in \A^1_x\colon [k(z):k(x)]=d\}\text{ for every }d\ge1.
\end{equation}
Note that $T'$ is also $R$-finite. Let $C''$ be a Zariski neighborhood of $T'$ in $C'$ such that $\phi|_{C''}$ is \'etale. It is enough to prove the proposition with $C$, $Y$, and $T$ replaced with $C''$, $Y\times_CC''$, and $T'$. Thus, we may assume from the beginning that condition~\eqref{eq:cond} is satisfied with $T$ instead of $T'$. It remains to apply~\cite[Variant~4.8]{GuoLiuGrSerre}.
\end{proof}

The following is an analogue of~\cite[Prop.~6.5]{CesnaviciusGrSerre}.
\begin{proposition}\label{pr:prepare}
Let $X$ be an integral affine scheme smooth and of relative positive dimension over a semilocal Dedekind domain. Let $\bG$ be a reductive $X$-group scheme with opposite parabolic $X$-subgroup schemes $\bP_\pm$. Assume that the $X$-group scheme $\bP_-\cap\bP_+$ has a maximal torus. Let $T\subset Y\subset X$ be closed subsets such that $Y$ is fiberwise of positive codimension over the Dedekind domain and $T$ is of codimension at least two in $X$. Let $\cE$ be a $\bG$-torsor over $X$ such that there is a unipotent chain of $\bG$-torsors over $X-T$ connecting $\bG_{X-T}$ to $\cE_{X-T}$ and assume that the~chain is trivial over $X-Y$. Fix a finite set of points $\bx\subset X$ and set $R:=\cO_{X,\bx}$. Then there are

\stepzero\noindstep\label{item:a} a smooth integral affine $R$-scheme $C$ of pure relative dimension one;

\noindstep a section $\Delta\in C(R)$;

\noindstep a $\bG_R$-torsor $\cE'$ over $C$ such that $\Delta^*\cE'\simeq\cE_R$;

\noindstep\label{item:d} closed subschemes $\widetilde Z\subset\widetilde Y\subset C$ such that $\widetilde Z$ is $R$-finite and $\widetilde Y$ is $R$-quasi-finite;

\noindstep\label{item:e} a unipotent chain of $\bG$-torsors over $C-\widetilde Z$ that connects $\bG_{C-\widetilde Z}$ to $\cE'_{C-\widetilde Z}$ and such that the chain is trivial over $C-\widetilde Y$ with respect to $\bP_\pm$;

\noindstep an \'etale $R$-morphism $C\to W$, where $W$ is an open subset of $\A^1_R$, that maps $\widetilde Y$ isomorphically onto a closed subscheme $Y'\subset W$ with $\widetilde Y\simeq Y'\times_WC$.
\end{proposition}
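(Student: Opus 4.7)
The plan is to combine Propositions~\ref{pr:PrepDescend}, \ref{pr:EquatingGroups}, and~\ref{pr:prepare0} in that order. First, choose a maximal torus $\bT\subset\bP_-\cap\bP_+$, available since the intersection is a common Levi subgroup scheme (\cite[exp.~XXVI]{SGA3-3}), and apply Proposition~\ref{pr:PrepDescend} to obtain a smooth affine integral $R$-curve $C_0$ of pure relative dimension one, a section $\Delta_0$, an $R$-finite $Z_0\subset C_0$, a reductive $(\bG',\bP'_\pm,\bT')$ over $C_0$ whose $\Delta_0$-pullback is $(\bG,\bP_\pm,\bT)_R$, a $\bG'$-torsor $\cE'_0$ with $\Delta_0^*\cE'_0\approx\cE_R$, and a unipotent chain over $C_0-Z_0$ trivial away from some $R$-quasifinite $Y_0\subset C_0$ containing $Z_0$.

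Next, equate the group schemes. Form the semilocalization $W$ of $C_0$ at the finite set $\Delta_0(\Spec R)\cup Z_0$. Over $W$ there are two reductive group schemes $\bG'|_W$ and $\bG_R|_W$, together with their parabolics and tori, agreeing via the given isomorphism $\iota$ on the closed subscheme $U:=\Delta_0(\Spec R)$. Apply Proposition~\ref{pr:EquatingGroups} and spread the resulting finite \'etale cover to a finite \'etale morphism $C_1\to C_0''$ for some open $C_0''\subset C_0$ containing $\Delta_0(\Spec R)\cup Z_0$, equipped with a section $\widetilde\Delta\colon\Spec R\to C_1$ lifting $\Delta_0$ and an isomorphism $\widetilde\iota\colon\bG'_{C_1}\to(\bG_R)_{C_1}$ preserving parabolics and tori. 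Transport $\cE'_0|_{C_1}$ via $\widetilde\iota$ to a $(\bG_R)_{C_1}$-torsor $\cE_1$; since $\widetilde\iota$ identifies $\bP'_\pm$ with $\bP_\pm$, Lemma~\ref{lm:ChainPullback} yields a unipotent chain over $C_1-Z_1$ (with $Z_1:=Z_0\times_{C_0''}C_1$) connecting the trivial torsor to $\cE_1|_{C_1-Z_1}$, trivial away from $Y_1:=Y_0\times_{C_0''}C_1$. Finiteness of $C_1\to C_0''$ preserves $R$-finiteness of $Z_1$ and $R$-quasifiniteness of $Y_1$.

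Finally, descend to $\A^1_R$. Enlarge to $T:=Z_1\cup\widetilde\Delta(\Spec R)$ and $Y:=Y_1\cup\widetilde\Delta(\Spec R)$; then $T\subset Y\subset C_1$, with $T$ still $R$-finite, $Y$ still $R$-quasifinite, and $\widetilde\Delta\in T(R)$. The ring $R=\cO_{X,\bx}$ is semilocal Noetherian Nagata, being a localization of a finite-type algebra over a semilocal Dedekind domain. Apply Proposition~\ref{pr:prepare0} to $(R,C_1,T\subset Y,\widetilde\Delta)$ to obtain an \'etale morphism $C\to C_1$ preserving $R$-finiteness of the pullback of $T$, a lifted section $\Delta\in C(R)$, and an \'etale $R$-morphism $C\to W\subset\A^1_R$ mapping $Y\times_{C_1}C$ isomorphically onto a closed $Y'\subset W$ with $Y\times_{C_1}C\simeq Y'\times_WC$. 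Define $\cE':=\cE_1|_C$, $\widetilde Z:=T\times_{C_1}C$, $\widetilde Y:=Y\times_{C_1}C$, and pull back the unipotent chain along $C\to C_1$ by Lemma~\ref{lm:ChainPullback}. All items (a)--(f) follow.

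The main obstacle is the equating step: Proposition~\ref{pr:EquatingGroups} is stated over a semilocal base, whereas $C_0$ is not semilocal, so care is required in spreading the \'etale cover to an open $C_0''\subset C_0$ containing not only $\Delta_0(\Spec R)$ but also $Z_0$, so that $R$-finiteness of $Z_1$ survives the pullback; semilocalizing at the union $\Delta_0(\Spec R)\cup Z_0$ resolves this. A secondary subtlety is that Proposition~\ref{pr:prepare0} requires the section to lie in its subscheme $T$, which forces the enlargement $T:=Z_1\cup\widetilde\Delta(\Spec R)$ before the final step.
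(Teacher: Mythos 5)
Your proof is correct and follows essentially the same route as the paper: apply Proposition~\ref{pr:PrepDescend}, equate the group schemes via Proposition~\ref{pr:EquatingGroups} over a semilocalization of the curve, then reduce to $\A^1_R$ via Proposition~\ref{pr:prepare0}. The only (cosmetic) difference is the bookkeeping order: the paper enlarges $Z$ to contain $\Delta(\Spec R)$ before the equating step, whereas you enlarge $T$ to contain the section's image only at the end, just before invoking Proposition~\ref{pr:prepare0}; both arrangements keep the semilocalization set and the $R$-finiteness requirements under control.
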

\begin{proof}
Let $\bT$ be a maximal torus of $\bP_-\cap\bP_+$. Consider the data $Z\subset C\to\Spec R$, $\Delta$, $\bT'\subset\bP'_-\cap\bP'_+\subset\bG'$, and~$\cE'$ provided by Proposition~\ref{pr:PrepDescend}. We also get a unipotent chain over $C-Z$ trivial away from an $R$-quasi-finite closed subscheme $Y_0\subset C$; replacing $Z$ with $Z\cup\Delta(\Spec R)$, and $Y_0$ with $Y_0\cup\Delta(\Spec R)$, we may assume that $\Delta(\Spec R)\subset Z$.

Applying Proposition~\ref{pr:EquatingGroups} to the semilocalization of $Z$ in~$C$ and spreading out, we find an open subscheme $C'\subset C$ containing $Z$, a finite \'etale morphism $C''\to C'$, a section $\Delta'\colon\Spec R\to C''$ lifting $\Delta$, and an isomorphism $\tilde\iota\colon\bG_{C''}\to\bG'_{C''}$ such that $(\Delta')^*\tilde\iota$ is the isomorphism of item~\eqref{item:d} of Proposition~\ref{pr:PrepDescend}. (Here $\bG_{C''}$ is the pullback of $\bG$ via the composition $C''\to C\to\Spec R\to X$.) Replacing $C''$ by a connected component, we may assume that $C''$ is integral.

Put $\cE'':=\cE'_{C''}$. We view $\cE''$ as a $\bG$-torsor using the isomorphism $\tilde\iota$. Now setting $\widetilde Z:=Z\times_CC''$, $\widetilde Y:=Y_0\times_CC''$, pulling back the unipotent chain to $C''-\widetilde Z$, and renaming $\cE''\rightsquigarrow\cE'$, $C''\rightsquigarrow C$, and $\Delta'\rightsquigarrow\Delta$ , we obtain items~\eqref{item:a}--\eqref{item:e} of the proposition. Moreover, $\Delta(R)\subset\widetilde Z$.

Further, applying Proposition~\ref{pr:prepare0} with $T:=\widetilde Z$ and $Y:=\widetilde Y$, we see that there are an \'etale $R$-morphism $\widetilde C\to C$ such that $\widetilde Z\times_C\widetilde C$ if $R$-finite; a lift $\widetilde\Delta\in\widetilde C(R)$ of $\Delta$; and an \'etale $R$-morphism $\widetilde C\to W$, where $W$ is an open subset of $\A^1_R$, that maps $\widetilde Y\times_C\widetilde C$ isomorphically onto a closed subscheme $Y'\subset W$ such that $\widetilde Y\times_C\widetilde C\simeq Y'\times_W\widetilde C$.

It remains to pullback the unipotent chain to $\widetilde C-\widetilde C\times_C\widetilde Z$ and to rename $\widetilde C\times_C\widetilde Z\rightsquigarrow\widetilde Z$, $\widetilde C\times_C\widetilde Y\rightsquigarrow\widetilde Y$,
$\cE'_{\widetilde C}\rightsquigarrow\cE'$, $\widetilde\Delta\rightsquigarrow\Delta$, and
$\widetilde C\rightsquigarrow C$.
\end{proof}

\subsection{Descending to \texorpdfstring{$\A^1$}{A1}}\label{sect:DescendA1}
The goal of this section is to descend the data of Proposition~\ref{pr:prepare} to $\A^1_R$. We start with the descend statement for unipotent chains. Recall from~\cite[Ch.~3, Def.~1.3]{MorelVoevodsky} and~\cite[Def.~3]{FedorovNisnevich}) that an \emph{elementary distinguished square\/} is a Cartesian diagram of schemes:
\begin{equation}\label{eq:ElemDistSq}
\begin{CD}
W @>>> V\\
@VVV @VV p V\\
U @>j>> Y,
\end{CD}
\end{equation}
where $p$ is \'etale, $j$ is an open embedding, and $p^{-1}(Y-U)\to Y-U$ is an isomorphism, where we equip the corresponding closed subsets with the reduced scheme structures.

\begin{lemma}\label{lm:Descent}
Consider an elementary distinguished square~\eqref{eq:ElemDistSq}. Assume that $\bG$ is an affine $Y$-group scheme.

(i) Let $\cE'$ and $\cE''$ be $\bG$-torsors over $U$ and $V$ respectively. Given an isomorphism between $\bG$-torsors $\cE'|_U\xrightarrow{\sigma}\cE''|_V$, there is a $\bG$-torsor $\cE$ over $Y$ with isomorphisms $\cE|_U\xrightarrow{\sigma'}\cE'$ and $\cE|_V\xrightarrow{\sigma''}\cE''$ such that $\sigma\circ\sigma'|_W=\sigma''|_W$. The triple $(\cE,\sigma',\sigma'')$ is unique up to a unique isomorphism.

(ii) Let $\bP'\subset\bG|_U$ and $\bP''\subset\bG|_V$ be closed subgroup schemes such that $\bP'|_W=\bP''|_W$. Then there is a unique $Y$-subgroup scheme $\bP\subset\bG$ such that $\bP|_U=\bP'$ and $\bP|_V=\bP''$.

(iii) Let $\bP\subset\bG$ be a closed $Y$-subgroup scheme and $\cE$ be a $\bG$-torsor over $Y$. Let $(\cF',\phi')$ be a $\bP|_U$-reduction of $\cF|_U$ and $(\cF'',\phi'')$ be a $\bP|_V$-reduction of $\cE|_V$. Assume further that $(\cE',\phi')|_W\simeq(\cE'',\phi'')|_W$. Then there is a unique $\bP$-reduction $(\cF,\phi)$ of $\cE$ such that $(\cF,\phi)|_U\simeq(\cF',\phi')$ and $(\cF,\phi)|_V\simeq(\cF'',\phi'')$.
\end{lemma}

\begin{proof}
(i) This is standard; see for example the first part of the proof of~\cite[Ch.~3, Prop.~1.4]{MorelVoevodsky} or~\cite[Sect.~5.4]{FedorovPanin}.

(ii) Similarly to (i), we can glue the affine group schemes $\bP'$ and $\bP''$ to obtain a $Y$-group scheme $\bP$ with an isomorphism $\bP|_U\simeq\bP'$ and $\bP|_V\simeq\bP''$. Next we glue the embeddings $\bP'\to\bG|_U$ and $\bP''\to\bG|_V$ to a homomorphism $\bP\to\bG$. This homomorphism is a closed embedding as this is an \'etale local property. Thus, we can view $\bP$ as a subgroup scheme of $\bG$. By construction $\bP$ satisfies the requirements. The uniqueness is also clear.

(iii) Using part (i) we glue $\cF'$ and $\cF''$ to obtain a $\bP$-torsor $\cF$ over $Y$. We glue $\phi'$ and $\phi''$ to obtain an isomorphism $\phi\colon\cF\times^\bP\bG\to\cE$.
\end{proof}

\begin{remark}
Under slightly different assumptions that are still satisfied in our situation, the required gluing statements should follow from~\cite[Prop.~4.2.1]{CesnaviciusProblems}. We prefer to use techniques of~\cite{MorelVoevodsky} to emphasize the Nisnevich local nature of our constructions.
\end{remark}

\begin{proposition}\label{pr:DescendChain}
Consider an elementary distinguished square~\eqref{eq:ElemDistSq}. Assume that $\bG$ is a reductive $Y$-group scheme with opposite parabolic subgroup schemes $\bP_\pm\subset\bG$. Let
\[
\bG_V,\cE_2,\ldots,\cE_n, \bP_1,\ldots,\bP_{n-1},\tau_1,\ldots,\tau_{n-1}
\]
be a unipotent chain of $\bG_V$-torsors trivial over $W$ (with respect to $(\bP_\pm)_W$). Then the chain descends to a unipotent chain over~$Y$ that is trivial over $U$ (with respect to $(\bP_\pm)_U$).
\end{proposition}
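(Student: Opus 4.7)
The plan is to descend the data of the chain piece by piece along the elementary distinguished square, using that such squares are effective for fpqc descent of torsors and of the auxiliary structures we care about (parabolic subgroup schemes, reductions to closed subgroup schemes). Throughout, the trivialization over $W$ that is built into the definition of a trivial chain serves as the gluing datum between objects on $V$ and their trivial counterparts on $U$.

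First, I construct the descended torsors inductively. Set $\widetilde\cE_1:=\bG_Y$. For $i\ge 2$, the trivial-chain assumption provides a trivialization $\varphi_i\colon\cE_i|_W\xrightarrow{\sim}\bG_W$. I regard $\varphi_i$ as an isomorphism between the pullback of $\cE_i$ from $V$ to $W$ and the pullback of the trivial $\bG$-torsor from $U$ to $W$. Since the elementary distinguished square~\eqref{eq:ElemDistSq} is an fpqc cover of $Y$ (in fact a Nisnevich cover), and since $\bG$-torsors are representable by affine $Y$-schemes, this gluing datum descends (see, e.g., the general descent formalism for elementary distinguished/Nisnevich squares, cf.~\cite[Ch.~3, Def.~1.3]{MorelVoevodsky}) to a $\bG$-torsor $\widetilde\cE_i$ on $Y$ equipped with canonical isomorphisms $\widetilde\cE_i|_V\simeq\cE_i$ and $\widetilde\cE_i|_U\simeq\bG_U$ which restrict on $W$ to $\varphi_i$.

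Next I descend the parabolic subgroup schemes. Under $\varphi_i$ the subgroup scheme $\bP_i|_W\subset\Aut(\cE_i)|_W$ becomes $(\bP_\pm)_W\subset\bG_W$, one choice of sign for each $i$. Hence on $V$ we have $\bP_i\subset\Aut(\cE_i)$ and on $U$ we have $(\bP_\pm)_U\subset\bG_U=\Aut(\bG_U)$; they agree on $W$ via the identifications just set up. The scheme of parabolic subgroup schemes of a reductive group is projective and in particular affine over the base, so this data descends to a parabolic subgroup scheme $\widetilde\bP_i\subset\Aut(\widetilde\cE_i)$ on $Y$ whose pullback to $V$ is $\bP_i$ and whose pullback to $U$ is $(\bP_\pm)_U$.

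Finally, I descend the reductions $\tau_i$. The $\Aut(\widetilde\cE_i)$-torsor $\Iso(\widetilde\cE_i,\widetilde\cE_{i+1})$ restricts on $V$ to $\Iso(\cE_i,\cE_{i+1})$ (with the reduction $\tau_i$ to $\Ru\bP_i$) and on $U$ to the trivial $\bG_U$-torsor; the trivial chain condition says that under the trivializations $\varphi_i,\varphi_{i+1}$ the restriction $\tau_i|_W$ becomes the standard reduction of $\bG_W$ to $\Ru(\bP_\pm)_W$, which coincides with the restriction from $U$ of the standard reduction of $\bG_U$ to $\Ru(\bP_\pm)_U$. A reduction of a torsor to a closed subgroup scheme is a closed (indeed affine) subscheme of the torsor satisfying a torsor condition, so once again it descends along the fpqc square, yielding a reduction $\widetilde\tau_i$ of $\Iso(\widetilde\cE_i,\widetilde\cE_{i+1})$ to $\Ru\widetilde\bP_i$. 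Assembling these pieces gives the desired unipotent chain $\bG_Y,\widetilde\cE_2,\ldots,\widetilde\cE_n,\widetilde\bP_1,\ldots,\widetilde\bP_{n-1},\widetilde\tau_1,\ldots,\widetilde\tau_{n-1}$ on $Y$. The only real subtlety is keeping track of which element of $\{\bP_+,\bP_-\}$ appears at each index, but since the sign is fixed by the trivial-chain datum, this is purely bookkeeping; the substantive input is the (standard) effectivity of Nisnevich/elementary-distinguished descent for affine-over-the-base structures.
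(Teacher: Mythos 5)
Your approach coincides with the paper's: glue each $\cE_i$ on $V$ with the trivial torsor on $U$ using the $W$-trivialization provided by the trivial-chain hypothesis, then likewise glue the parabolic subgroup schemes and the unipotent reductions $\tau_i$, checking compatibility over $W$. One small slip: you write that the scheme classifying parabolic subgroup schemes is ``projective and in particular affine over the base,'' but projective does not imply affine (and the classifying scheme is typically not affine). What actually makes the descent of $\bP_i$ work is that parabolic subgroup schemes correspond to sections of that projective $Y$-scheme, and sections glue along an elementary distinguished (Nisnevich) square — or, equivalently, that each $\bP_i$ is a closed subscheme of the already-descended affine $Y$-scheme $\Aut(\widetilde\cE_i)$. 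The conclusion is unaffected, but you should correct the justification.
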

\begin{proof}
    Since the chain is trivial over $W$, we have trivializations $s_i\colon\bG_W\to(\cE_i)_W$ identifying $(\bP_i)_W$ with $(\bP_\pm)_W$ ($i=2,\ldots,n$). Using trivializations $s_i$ and Lemma~\ref{lm:Descent}(i), we descend the torsors $\cE_i$ to $\bG$-torsors $\cE'_i$ over $Y$ with distinguished trivializations over $U$ and isomorphisms $(\cE'_i)_V\xrightarrow{\sigma_i}\cE_i$ such that $(\sigma_i)_W$ identifies the induced trivialization of $(\cE'_i)_W$ with $s_i$. Assume that $(\bP_i)_W$ corresponds to $(\bP_-)_W$ under the trivialization (the case of $(\bP_+)_W$ is completely similar). Let $\bP''_i\subset\Aut(\cE'_i)_U$ correspond to $\bP_-$ under the distinguished trivialization of $\cE'_i$. Then under the isomorphism $\Aut(\cE'_i)_W\xrightarrow{\sim}\Aut(\cE_i)_W$ induced by $(\sigma_i)_W$,  $(\bP''_i)_W$ corresponds to $(\bP_i)_W$ as they both correspond to $(\bP_-)_W$ under the trivializations. By Lemma~\ref{lm:Descent}(ii) there is a unique $\bP'_i\subset\Aut(\cE'_i)$ such that $(\bP'_i)_U=\bP''_i$ and $(\bP'_i)_V=\bP_i$.

    Let $\tau''_i$ be the reduction of $\Iso(\cE'_i,\cE'_{i+1})_U$ to $(\Ru\bP'_i)_U$ corresponding to the standard reduction via the distinguished trivializations of $\cE'_i$ and $\cE'_{i+1}$. By construction, $(\tau''_i)_W=(\tau_i)_W$, where we identify $\Iso(\cE'_i,\cE'_{i+1})_W$ with $\Iso(\cE_i,\cE_{i+1})_W$ using $\sigma_i$ and $\sigma_{i+1}$, so by Lemma~\ref{lm:Descent}(iii) we obtain a reduction $\tau'_i$ of $\Iso(\cE'_i,\cE'_{i+1})$ to $\Ru\bP'_i$ extending the standard reduction on $U$. Now
    \[
        \bG,\cE'_2,\ldots,\cE'_n, \bP'_1,\ldots,\bP'_{n-1},\tau'_1,\ldots,\tau'_{n-1}
    \]
    is the required chain. The chain is trivial over $U$ by construction.
\end{proof}

\begin{proposition}\label{pr:A1}
Let $X$ be an integral affine scheme smooth of positive relative dimension over a semilocal Dedekind domain. Let $\bG$ be a reductive $X$-group scheme with opposite proper parabolic $X$-subgroup schemes $\bP_\pm$. Assume that the $X$-group scheme $\bP_-\cap\bP_+$ has a maximal torus. Let $T\subset Y\subset X$ be closed subsets such that $Y$ is fiberwise of positive codimension over the Dedekind domain and $T$ is of codimension at least two in $X$. For a finite set of points $\bx\subset X$, set $R:=\cO_{X,\bx}$. Let $\cE$ be a $\bG$-torsor over $X$ such that there is a unipotent chain of $\bG$-torsors over $X-T$ connecting $\bG_{X-T}$ to $\cE_{X-T}$ and assume that the chain is trivial over $X-Y$. Then there are

\stepzero\noindstep
a closed $R$-finite subscheme $Y\subset\A^1_R$;

\noindstep a $\bG$-torsor $\cE'$ over $\A^1_R$ trivial away from $Y$; and

\noindstep a section $\Delta\in\A^1_R(R)$ such that $\Delta^*\cE'\simeq\cE_R$.
\end{proposition}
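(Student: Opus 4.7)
The plan is to use Proposition~\ref{pr:prepare} to lift the problem to a smooth relative curve $C$ over $R$ equipped with a unipotent chain, to descend both the torsor and the chain along the \'etale map $\phi\colon C\to W\subset\A^1_R$ produced there, and finally to patch the descended torsor on $W$ with a trivial torsor on an affine open complement, extending it across~$\A^1_R$ while keeping its support inside an $R$-finite closed subscheme.

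Applying Proposition~\ref{pr:prepare} first yields a smooth $R$-scheme $C$ of pure relative dimension one, a section $\Delta\in C(R)$, a $\bG_R$-torsor $\cE'$ on $C$ with $\Delta^*\cE'\simeq\cE_R$, an $R$-finite closed $\widetilde Z\subset C$ and $R$-quasifinite closed $\widetilde Y\supset\widetilde Z$, a unipotent chain over $C-\widetilde Z$ connecting $\bG_{C-\widetilde Z}$ to $\cE'|_{C-\widetilde Z}$ and trivial over $C-\widetilde Y$, and an \'etale $\phi\colon C\to W\subset\A^1_R$ sending $\widetilde Y$ isomorphically onto a closed $Y'\subset W$ with $\widetilde Y\simeq Y'\times_W C$. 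The morphism $\phi$ then fits into an elementary distinguished square whose upper row is $C-\widetilde Y\hookrightarrow C$ and whose lower row is $W-Y'\hookrightarrow W$. Since every torsor in a trivial chain is canonically trivialized, $\cE'|_{C-\widetilde Y}$ carries a canonical trivialization, and gluing it against the trivial $\bG_R$-torsor on $W-Y'$ descends $\cE'$ to a $\bG_R$-torsor $\cE_W$ on $W$ with $\phi^*\cE_W\simeq\cE'$ and $\cE_W|_{W-Y'}$ trivial. Because $\phi|_{\widetilde Y}$ is an isomorphism onto $Y'$ with $\phi^{-1}(Y')=\widetilde Y$, the image $\phi(\widetilde Z)$ is $R$-finite and closed in $\A^1_R$ and $\phi^{-1}(\phi(\widetilde Z))=\widetilde Z$. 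Thus the restriction of the square over $W-\phi(\widetilde Z)$ is again elementary distinguished, and Proposition~\ref{pr:DescendChain} descends the unipotent chain over $C-\widetilde Z$ to a unipotent chain over $W-\phi(\widetilde Z)$ connecting $\bG_{W-\phi(\widetilde Z)}$ to $\cE_W|_{W-\phi(\widetilde Z)}$.

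To extend $\cE_W$ to $\A^1_R$, first shrink $W$ (and correspondingly $C$) so that $W=D(g)$ is a principal affine open of $\A^1_R$ still containing $\phi(\widetilde Z)$; such $g\in R[t]$ exists by prime avoidance, since $\phi(\widetilde Z)$ is $R$-finite and disjoint from the closed proper subset $\A^1_R-W$. Next choose a monic $f\in R[t]$ with $V(f)\supset\phi(\widetilde Z)$ and $V(f)\cap V(g)=\emptyset$; this is achievable using that $I(\phi(\widetilde Z))+(g)=R[t]$ and that $I(\phi(\widetilde Z))$ contains a monic polynomial. Set $Y:=V(f)$; this is an $R$-finite closed subscheme of $\A^1_R$ contained in $W$, with $W-Y=D(fg)$ affine. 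Restricting the descended chain to $W-Y\subset W-\phi(\widetilde Z)$ via Lemma~\ref{lm:ChainPullback} and applying Lemma~\ref{lm:ChainTrivAff}, we conclude that $\cE_W|_{W-Y}$ is trivial. Since $Y\subset W$, the Zariski cover $\A^1_R=W\cup(\A^1_R-Y)$ has intersection $W-Y$, and patching $\cE_W$ against the trivial $\bG_R$-torsor on $\A^1_R-Y$ via this trivialization produces the desired torsor $\widetilde\cE$ on $\A^1_R$ trivial away from~$Y$; the section $\Delta':=\phi\circ\Delta\in\A^1_R(R)$ satisfies $(\Delta')^*\widetilde\cE\simeq\Delta^*\cE'\simeq\cE_R$. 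I expect the main obstacle to be this last polynomial construction: arranging $g$ and $f$ so that $Y$ is simultaneously $R$-finite, closed in $\A^1_R$, contained in $W$, contains $\phi(\widetilde Z)$, and has affine complement in $W$, which requires carefully balancing prime avoidance and Chinese remainder arguments in $R[t]$.
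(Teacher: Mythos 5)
Your proof follows the same architecture as the paper's: apply Proposition~\ref{pr:prepare} to produce $C\to W\subset\A^1_R$ together with the unipotent chain; descend the torsor and the chain along the resulting elementary distinguished squares; locate an $R$-finite closed $Y\subset W$ containing the image of $\widetilde Z$ with $W-Y$ affine; invoke Lemma~\ref{lm:ChainTrivAff} to trivialize $\cE_W$ over $W-Y$; and patch with the trivial torsor over $\A^1_R-Y$. The descent steps, the elementary distinguished squares you use, the verification that $\phi^{-1}(\phi(\widetilde Z))=\widetilde Z$, and the final computation of $(\phi\circ\Delta)^*$ of the patched torsor are all correct and match the paper.

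The one genuine divergence is the construction of $Y$. The paper simply invokes~\cite[Lemma~3.5(ii)]{FedorovMixedChar} (generalized from local to semilocal $R$) to get an $R$-finite $Y'\subset W$ with $Z\subset Y'$ and $W-Y'$ affine, with no shrinking of $W$. You instead replace $W$ by a principal affine open $D(g)\supset\phi(\widetilde Z)$ and then set $Y:=V(f)$ for a monic $f\in I(\phi(\widetilde Z))$ with $(f,g)=R[t]$. Your step ``choose $g$ with $\phi(\widetilde Z)\subset D(g)\subset W$'' is fine (write $1=a+b$ with $a$ vanishing on $\A^1_R-W$ and $b$ on $\phi(\widetilde Z)$, then $g=a$ works). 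The step that is not substantiated is the existence of the monic $f\in I(\phi(\widetilde Z))$ with $V(f)\cap V(g)=\emptyset$. This is \emph{true}, but it is not a one-line consequence of ``$I(\phi(\widetilde Z))+(g)=R[t]$ and $I(\phi(\widetilde Z))$ contains a monic polynomial'': a monic $p\in I(\phi(\widetilde Z))$ may well have $V(p)\cap V(g)\neq\emptyset$, and multiplying $p$ by another monic polynomial only enlarges $V$, so it cannot repair this. One has to work fiberwise over the finitely many closed points of $\Spec R$: over each residue field $\kappa_i$ the ideal $J_{\kappa_i}$ is $(p'_i)$ with $p'_i$ monic coprime to $g_{\kappa_i}$, and one picks a monic $f_i=p'_iq_i$ of a common degree $n$ with $q_i$ coprime to $g_{\kappa_i}$ (when $\kappa_i$ is finite this needs $n$ large, e.g.\ take $q_i$ irreducible of degree $>\deg g_{\kappa_i}$); then lifts $(f_i)_i$ to a monic $f\in J$ of degree $n$ using that $J\to\prod_iJ_{\kappa_i}$ is surjective and that one can truncate high-degree coefficients (which lie in the Jacobson radical) by subtracting suitable multiples of a monic $p\in J$. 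You correctly flag this as ``the main obstacle,'' but the proposal leaves the gap open, so as written the proof is incomplete precisely at the step the paper resolves by citation.
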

\begin{proof}
Set $U:=\Spec R$. Consider the data $\widetilde Z\subset\widetilde Y\subset C\to W\supset Y, \Delta, \cE'$ provided by Proposition~\ref{pr:prepare}. Let $Z\subset Y$ be the image of $\widetilde Z$ in $W$ so that $Z$ is $R$-finite. Let $\bG_{C-\widetilde Z},\cE_1,\ldots,\cE_{n-1},\cE'_{C-\widetilde Z},\bP_1,\ldots$ be the unipotent chain trivial over $C-\widetilde Y$. We fix the corresponding trivializations of $(\cE_i)_{C-\widetilde Y}$ and $\cE'_{C-\widetilde Y}$.

We have the following elementary distinguished square
\begin{equation*}
\begin{CD}
C-\widetilde Y @>>> C\\
@VVV @VVV \\
W-Y @>>>W.
\end{CD}
\end{equation*}
We use this square and the trivialization of $\cE'$ over $C-\widetilde Y$ to glue $\cE'$ with the trivial $\bG$-torsor over $W-Y$ (see~\cite[prop.~2.6]{ColliotTeleneOjanguren}). We obtain a $\bG$-torsor $\cE''$ over $W$. By Proposition~\ref{pr:DescendChain} applied to the elementary distinguished square
\begin{equation*}
\begin{CD}
C-\widetilde Y @>>> C-\widetilde Z\\
@VVV @VVV \\
W-Y @>>>W-Z
\end{CD}
\end{equation*}
the chain $\bG_{C-\widetilde Z},\cE_1,\ldots,\cE_{n-1},\cE'_{C-\widetilde Z},\bP_1,\ldots$ descends to $W-Z$. It follows from the construction in Proposition~\ref{pr:DescendChain}, that the obtained unipotent chain connects $\bG_{W-Z}$ with $\cE''_{W-Z}$.

Next, by~\cite[Lemma~3.5(ii)]{FedorovMixedChar} (which generalizes immediately from the local to the semilocal case) there is an $R$-finite closed subscheme $Y'\subset W$ such that $Z\subset Y'$ and $W-Y'$ is affine. Thus, by Lemma~\ref{lm:ChainTrivAff} $\cE''$ is trivial over $W-Y'$. Gluing $\cE''$ with the trivial $\bG$-torsor over $\A^1_R-Y'$, we obtain a $\bG$-torsor $\cE'''$ over $\A^1_R$.

It remains to replace $\Delta$ with the composition of $\Delta$ and $C\to \A^1_R$ and rename $\cE'''\rightsquigarrow\cE'$, $Y'\rightsquigarrow Y$.
\end{proof}

\section{Torsors over \texorpdfstring{$\A^1$}{A1} and completion of the proof of Theorem~\ref{th:Main2}}\label{sect:TorsorsA1}
The goal of this section is to prove Theorem~\ref{th:A1} and to complete the proof of Theorem~\ref{th:Main2}.

\subsection{} Let $G$ be a semisimple group scheme over a field $k$. Let $\phi\colon G^{\,\sc}\to G$ be the simply-connected central cover. Recall the notion of a topologically trivial torsor~\cite[Def.~2.1]{FedorovGrSerreNonSC}.

\begin{definition}
A Zariski locally trivial $G$-torsor $E$ over the projective line $\P^1_k$, where $k$ is a field, is called \emph{topologically trivial\/} if there is a Zariski locally trivial $G^{\,\sc}$-torsor $E^{\sc}$ over $\P^1_k$ such that $\phi_*E^{\sc}\simeq E$.
\end{definition}

Recall that a semisimple group scheme of adjoint type can be written as the product of Weil restrictions of simple group schemes along finite connected \'etale covers (see~\cite[exp.~XXIV, prop.~5.10]{SGA3-3}). We start with the following theorem.

\begin{theorem}\label{th:MainThm2}
Let $U$ be a connected affine semilocal scheme. Let $\bG$ be a~reductive group scheme over~$U$ with center $\bZ$; write
\[
    \bG^{\ad}:=\bG/\bZ\simeq\prod_{i=1}^r\bG^i,
\]
where $\bG^i$ is the Weil restriction of a simple $U_i$-group scheme $\overline\bG^i$ along a finite \'etale morphism $U_i\to U$. Let $Z\subset\A^1_U$ be a closed subscheme finite over $U$. Let $\cG$ be a $\bG$-torsor over $\P^1_U$ such that its restriction to $\P^1_U-Z$ is trivial and such that for all closed points $u\in U$ the $\bG_u^{\ad}$-torsor $(\cG_{\P^1_u})/\bZ_u$ is topologically trivial.

Let $Y\subset\A^1_U$ be a closed subscheme finite and \'etale over $U$. Assume that $Y\cap Z=\emptyset$. Assume further that for each $i=1,\ldots,r$ there is an open and closed subscheme $Y^i\subset Y\times_UU_i$ satisfying two properties: {\rm(}i{\rm)} the pullback of $\overline\bG^i$ to $Y^i$ is isotropic, and {\rm(}ii{\rm)} for every closed point $v\in U_i$ such that $\overline\bG^i_v$ is isotropic we have $\Pic(\P^1_v-Y^i_v)=0$. Finally, assume that the relative line bundle $\cO_{\P^1_U}(1)$ trivializes on $\P^1_U-Y$.

Then the restriction of $\cG$ to $\P^1_U-Y$ is also trivial.
\end{theorem}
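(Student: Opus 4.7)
The plan is to follow the proof of \cite[Thm.~4]{FedorovGrSerreNonSC}, which handles the case where $U$ is a scheme over a field, and port it to the affine semilocal setting. As the paper indicates, the essential nontrivial content occurs over the finitely many closed points of $U$, so only the lifting/gluing step should require substantive adjustment.

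First I would reduce to the case that $\bG$ is simple. Writing $\bG^{\ad} = \prod_i \bG^i$ and using that a $\bG$-torsor induces torsors for each factor (together with \cite[exp.~XXIV, prop.~5.10 and prop.~8.4]{SGA3-3}), the statement decomposes into separate statements for each simple Weil-restricted factor $\overline{\bG}^i$ over $U_i$. The remaining central $\bZ$-torsor contributes a toric part whose triviality on $\P^1_U - Y$ follows from the Grothendieck--Serre conjecture for tori (\cite[thm.~4.1(i)]{ColliotTheleneSansuc}), the triviality on $\P^1_U - Z$, and the hypothesis that $\cO_{\P^1_U}(1)$ trivializes on $\P^1_U - Y$, the last pinning down the relevant degree. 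After this reduction we may assume $\overline{\bG}^i = \bG$, $U_i = U$, and there is a single open-and-closed $Y' \subset Y$ on which $\bG$ is isotropic.

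Next I would analyze closed fibers. For each closed point $u \in U$ the restriction $\cG_u$ satisfies all the hypotheses of the theorem over the residue field $k(u)$: trivial away from $Z_u$, topologically trivial on the adjoint quotient by assumption, isotropic over $Y'_u$, with $\Pic(\P^1_{k(u)} - Y'_u) = 0$, and $\cO(1)$ trivializing on $\P^1_{k(u)} - Y_u$. The field case \cite[Thm.~4]{FedorovGrSerreNonSC} yields a trivialization of $\cG_u$ over $\P^1_{k(u)} - Y_u$ for each such $u$. The final, and hardest, step is to lift these fiberwise trivializations to one over $\P^1_U - Y$. Here one fixes the hypothesized trivialization of $\cG$ over $\P^1_U - Z$ and compares it with the fiberwise trivializations to obtain transition data along each closed fiber. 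The isotropy along $Y'$ supplies unipotent subgroup schemes in which these transitions can be written as products, and the Picard vanishing at closed points combined with a Nakayama-type argument over the semilocal base allows simultaneous adjustment of the unipotent factors, promoting the fiberwise trivializations to a global one.

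The main obstacle is the compatibility of the lifts across the distinct closed points of $U$, together with the global degree bookkeeping: nothing a priori guarantees that the ``heights'' of the unipotent modifications chosen at different closed fibers can be chosen consistently. The role of the triviality of $\cO_{\P^1_U}(1)$ on $\P^1_U - Y$ is precisely to remove this obstruction, making the degree constraints globally consistent so that the Nakayama lifting can be executed uniformly over $U$ rather than only fiberwise. Once this is in place, semilocality of $U$ together with the affine structure of the relevant complements $\P^1_U - Y$-pieces allows the gluing to go through exactly as in the field case, with only cosmetic changes.
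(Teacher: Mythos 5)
Your proposal has a genuine gap, and it also starts from the wrong reference. The paper's own proof of Theorem~\ref{th:MainThm2} is a one-line inspection argument: the statement is \cite[Thm.~6]{FedorovGrSerreNonSC} when $U$ lies over a field (you cite Thm.~4, which is the model for Theorem~\ref{th:A1}, a different statement), and the proof of that theorem works verbatim for semilocal $U$ with a single substitution (Lm.~8.3 of \cite{CesnaviciusGrSerre} in place of Prop.~2.12 of \cite{FedorovGrSerreNonSC}).

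The substantive error is the opening reduction to the simple case. You propose to write $\bG^{\ad} = \prod_i \bG^i$ and ``use that a $\bG$-torsor induces torsors for each factor,'' treating the center $\bZ$ as contributing a separate ``toric part.'' This is not available: the decomposition in the hypotheses is of $\bG^{\ad}=\bG/\bZ$, not of $\bG$, and a $\bG$-torsor does not split into a $\bG^{\ad}$-torsor together with a $\bZ$-torsor. The sequence $1\to\bZ\to\bG\to\bG^{\ad}\to1$ gives a long exact cohomology sequence with a nontrivial boundary map, not a direct product; \cite[exp.~XXIV, prop.~8.4]{SGA3-3} applies to torsors under Weil restrictions and does not produce a factorization of $\cG$ itself. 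The hypotheses of the theorem — topological triviality of the induced adjoint torsor at closed fibers, isotropy of the simple factors along the components $Y^i$, and the Picard vanishing condition — are calibrated precisely because one must work with the $\bG$-torsor globally; if one could reduce to simple $\bG$, the statement would be formulated without any reference to $\bG^{\ad}$. The later ``fiberwise triviality plus Nakayama-type lifting'' step is also not a proof sketch one can check: the actual argument in \cite[Thm.~6]{FedorovGrSerreNonSC} runs through structure theory of torsors on $\P^1$ (affine Grassmannian / Bruhat cell analysis), and the modification needed in the semilocal case is a specific lifting lemma, not a Nakayama argument over the base.
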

\begin{proof}
  If $U$ is a scheme over a field, then this is~\cite[Thm.~6]{FedorovGrSerreNonSC}. By inspection, the proof goes through in the general case except that the reference to~\cite[Prop.~2.12]{FedorovGrSerreNonSC} should be replaced with the reference to~\cite[Lm.~8.3]{CesnaviciusGrSerre}. Alternatively, one can use~\cite[Prop.~5.3.6]{CesnaviciusProblems}.
\end{proof}

Next, we need a lemma.

\begin{lemma}\label{lm:isotropic}
Let $U_i$, $U$, and $\overline{\bG}^i$ be as in the formulation of the theorem, where $1\le i\le r$. Let $T\subset\A^1_U$ be a closed subscheme finite over $U$. Then for every $i$ there is a finite \'etale $U_i$-scheme $Y$ such that $\overline{\bG}^i_Y$ is isotropic and the following condition is satisfied:

(*) if $v$ is a closed point of $U_i$ such that $\overline{\bG}^i_v$ is isotropic, then the $k(v)$-degrees of the points of $Y_v$ are coprime.

Moreover, this $Y$ can be chosen so that there is a closed $U$-embedding $Y\to\A^1_U-T$, where $Y$ is viewed as a $U$-scheme via the composition $Y\to U_i\to U$.
\end{lemma}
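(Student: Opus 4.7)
The plan is two-staged: first, build a finite \'etale cover $Y \to U_i$ over which $\overline{\bG}^i$ becomes isotropic with the fiberwise structure demanded by (*); second, realize $Y$ as a closed $U$-subscheme of $\A^1_U - T$.

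For the first stage, I would work with the smooth projective $U_i$-scheme $\mathrm{Par}^{\mathrm{prop}}(\overline{\bG}^i)$ parametrizing proper parabolic subgroup schemes. Its fiber at any closed point $v$ of $U_i$ is a non-empty projective $k(v)$-variety, since any simple reductive group admits a proper parabolic after base change to a suitable finite separable extension of the residue field. For each closed point $v$ of $U_i$, I pick a closed point $\alpha_v \in \mathrm{Par}^{\mathrm{prop}}(\overline{\bG}^i)_v$, insisting that $\alpha_v$ be $k(v)$-rational whenever $\overline{\bG}^i_v$ is isotropic. Applying the Bertini-style cut-down argument from the proof of Proposition~\ref{pr:TorTorsors}\eqref{item:torus3} to $\mathrm{Par}^{\mathrm{prop}}(\overline{\bG}^i) \to U_i$ with the data $\{\alpha_v\}$ produces a closed subscheme $Y \subset \mathrm{Par}^{\mathrm{prop}}(\overline{\bG}^i)$ finite and \'etale over $U_i$ and containing every $\alpha_v$. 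The tautological parabolic over $\mathrm{Par}^{\mathrm{prop}}(\overline{\bG}^i)$ then pulls back to a proper parabolic of $\overline{\bG}^i_Y$, so $\overline{\bG}^i_Y$ is isotropic; and the presence of the $k(v)$-rational point $\alpha_v \in Y_v$ at each isotropic $v$ forces the gcd of the residue degrees of $Y_v$ to equal $1$, giving (*).

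For the second stage, $Y$ is finite \'etale over the semilocal affine base $U$ via the composition $Y \to U_i \to U$, so a closed $U$-embedding $Y \hookrightarrow \A^1_U$ corresponds to a primitive element $a$ of the \'etale algebra $\Gamma(Y, \cO_Y)$ over $\Gamma(U, \cO_U)$. To make the image avoid $T$, I additionally require that the residues of $a$ in each closed fiber of $U$ avoid the residues of $T$ in that fiber. This is a system of constraints at the finitely many closed points of $U$, which I would solve residue-field by residue-field via the Chinese Remainder Theorem and then lift to a global element.

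The main obstacle will be the case of small finite residue fields of $U$, where the number of closed points of $\A^1_{k(u)}$ of a given degree is limited and the combined demand of $Y$ and $T$ might exhaust a degree stratum. I would handle this by re-choosing the $\alpha_v$ at anisotropic closed points to lie in residue-field extensions of sufficiently large degree --- always possible because $\mathrm{Par}^{\mathrm{prop}}(\overline{\bG}^i)_v$ is a positive-dimensional projective variety, hence has closed points of arbitrarily large residue-field degree --- thereby shifting the unwanted fiber-points of $Y$ into higher-degree strata of $\A^1_{k(u)}$ and leaving enough room to embed $Y$ while avoiding $T$.
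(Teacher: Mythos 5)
Your first stage matches the paper's construction of the auxiliary scheme (which the paper calls $Y'$): use the scheme $\cP$ of proper parabolic subgroup schemes and a Bertini cut-down to get $Y' \subset \cP$ finite \'etale over $U_i$ containing a chosen $k(v)$-rational point at each isotropic $v$. The issue is your second stage: you handle the collision with $T$ by re-choosing $\alpha_v$ at \emph{anisotropic} closed points to have large residue degree, but this does not address the actual obstruction. The points you cannot move are the forced degree-one points at \emph{isotropic} $v$ with small residue field, and those are exactly the ones that may collide with $T$ or exhaust a small-degree stratum of $\A^1_{k(u)}$. Moreover, the Bertini argument only guarantees that the prescribed $\alpha_v$ lie in $Y_v$; it gives no control over the degrees of the other points that $Y_v$ necessarily acquires during the cut-down, and those extra points can also land on low degrees and conflict with $T$.

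The paper's proof contains a second step that your proposal is missing and that resolves both problems simultaneously: after obtaining the Bertini output $Y'$, one chooses coprime primes $p_2 > p_1$ with $p_1$ exceeding every residue-degree occurring in $T$ (and large enough that the degree-$n$ strata of $\A^1_{k(u)}$ for $n \ge p_1$ have more points than $Y'_u$), and replaces $Y'$ by a further finite \'etale cover $Y = \Spec A[t]/(h)$ of degree $p_1+p_2$, where over each closed point $w$ of $Y'$ with finite residue field $h$ reduces to a product of irreducibles of degrees $p_1$ and $p_2$. This forces \emph{every} point of $Y$ over such $u$ to have degree $\ge p_1$ over $k(u)$, so there is automatically no collision with $T$ and plenty of room for a closed embedding; and condition (*) is retained not by a degree-one point (which has been destroyed) but because at each isotropic $v$ the $k(v)$-rational point of $Y'_v$ produces two points of $Y_v$ of coprime degrees $p_1$ and $p_2$. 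Without this auxiliary cover, the claim that one can embed $Y$ into $\A^1_U - T$ does not go through.
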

\begin{proof}
Let $\cP$ be the $U_i$-scheme classifying the proper parabolic subgroup schemes of $\overline{\bG}^i$. By~\cite[exp.~XXVI, cor.~3.6]{SGA3-3} $\cP$ is smooth and projective over $U_i$. Thus, for some $N>0$ we have a $U_i$-embedding $\cP\to\P_{U_i}^N$. Let $v$ be a closed point of $U_i$. If $\overline\bG^i_v$ is isotropic, then we have a $k(v)$-rational point $\widetilde v$ on the fiber $\cP_v$.

Next, we argue as in the proof of Proposition~\ref{pr:TorTorsors}\eqref{item:torus3}. Let $\cP'\subset\cP$ be a connected component. Using Bertini's Theorem (see~\cite{PoonenOnBertini} and~\cite[exp.~XI, thm.~2.1(ii)]{SGA4-3}), we see that for large $d$ there is a degree~$d$ hypersurface $H_{1,v}$ in $\P_v^N$ intersecting~$\cP'_v$ transversally and containing all the point $\widetilde v$ that belong to~$\cP'$. We lift the hypersurfaces $H_{1,v}$, where $v$ ranges over the closed points of $U_i$, to a hypersurface $H_1\subset\P_{U_i}^N$. Thus, $H_1$ intersects $\cP'$ transversally and contains all the points $\tilde v$ that belong to $\cP'$. Next, we find a hypersurface $H_2\subset\P_{U_i}^N$ intersecting $H_1\cap\cP$ transversally and containing all the points $\tilde v$ belonging to $\cP'$. Repeating this procedure, we find a closed subscheme $Y'\subset\cP'$ finite and \'etale over $U_i$. Performing this procedure for all connected components of $\cP$ and letting $Y''$ be the union of the subschemes $Y'$, we obtain a closed subscheme $Y''\subset\cP$ that is finite and \'etale over $U_i$ and such that for all closed points~$v$ such that $\overline{\bG}^i_v$ is isotropic, $Y''_v$ has a $k(v)$-rational point. Since $Y''\subset\cP$, $\overline{\bG}^i_{Y''}$ is isotropic. (Cf.~\cite[Prop.~4.1]{FedorovPanin} and~\cite[Lm.~4.3]{PaninNiceTriples}.)

Now we choose a prime number $p_1$ large enough that

    (i) if $u'\in T$ is a point lying over a closed point $u\in U$, then $p_1>[k(u'):k(u)]$;

    (ii) for $n\ge p_1$ and a closed point $u\in U$ such that $k(u)$ is finite, the number of degree $n$ points in $\A^1_{k(u)}$ is larger than the number of points in the $u$-fiber of $Y''$.

Choose a prime number $p_2>p_1$. For every closed point $w\in Y''$ choose a monic polynomial $h_w\in k(w)[t]$ of degree $p_1+p_2$ such that: (i) if $k(w)$ is finite, then $h_w$ is the product of two irreducible polynomials of degrees $p_i$; if $k(w)$ is infinite, then $h_w$ is a separable polynomial having a root in $k(w)$.

Note that $Y''$ is affine and semilocal, since it is finite over $U$. Write $Y''=\Spec A$. Then we can find a~monic polynomial $h\in A[t]$ that reduces to $h_w$ at each closed point $w$. Since $h$ is monic, the scheme $Y:=\Spec A[t]/(h)$ is finite over $Y''$. Since $Y$ has a morphism to $Y''$, we see that $\overline{\bG}^i_Y$ is isotropic.

Condition~(*) is satisfied by the choice of $h_w$.

It remains to check that there is a closed $U$-embedding $Y\to\A^1_U-T$. Let $u\in U$. If $k(u)$ is finite, there is a closed $k(u)$-embedding $Y_u\hookrightarrow\A^1_u-T_u$ because of our conditions (i) and (ii). If $k(u)$ is infinite, there is a closed $k(u)$-embedding $Y_u\hookrightarrow\A^1_u-T_u$ because $Y_u$ is separable over $k(u)$. By the Chinese Remainder Theorem these embeddings can be extended to an embedding $Y\hookrightarrow\A^1_U-T$, since is affine and semilocal.
\end{proof}

\begin{proof}[Proof of Theorem~\ref{th:A1}]
We may assume that~$U$ is connected. Applying an affine transformation to $\A^1_U$, we may assume that $\Delta$ is the horizontal section $\Delta(U)=U\times1$. We can extend the $\bG$-torsor $\cE$ to a $\bG$-torsor $\widetilde\cE$ over $\P^1_U$ by gluing it with the trivial $\bG$-torsor over $\P^1_U-Z$. Let $d$ be the degree of the simply-connected central cover of $\bG^{\ad}$ (see~\cite[Exercise~6.5.2]{ConradReductive}). Consider the morphism $\P^1_\Z\to\P^1_\Z\colon z\mapsto z^d$; let $\psi\colon\P^1_U\to\P^1_U$ be the base change of this morphism. Consider the $\bG$-torsor $\psi^*\widetilde\cE$ over $\P^1_U$. For a closed point $u\in U$ write $\widetilde\cE_u:=\widetilde\cE_{\P_u^1}$. Then by~\cite[thm.~3.8(a)]{GilleTorseurs} the $\bG^{\ad}_u$-torsor $\widetilde\cE_u/\bZ_u$ is Zariski locally trivial. By~\cite[Prop.~2.3]{FedorovGrSerreNonSC} the $\bG^{\ad}_u$-torsor $\psi^*(\widetilde\cE_u/\bZ_u)$ is topologically trivial. Since the morphism $\psi$ has a section over $U\times1$, it is enough to show that $\psi^*\widetilde\cE_{U\times1}$ is trivial. Note that $\psi^*\widetilde\cE$ is trivial over $\P^1_U-\psi^{-1}(Z)$.

Now using Lemma~\ref{lm:isotropic} we construct inductively $U_i$-schemes $Y^i$ satisfying the conditions of the lemma with closed $U$-embeddings $\iota_i\colon Y^i\to\A^1_U$ such that the subschemes $\iota_i(Y^i)$ are disjoint from each other and from $\psi^{-1}(Z)\cup(U\times0)\cup(U\times 1)$.

Take $Y=(U\times 0)\sqcup\bigsqcup_{i=1}^rY^i$. Note that $Y^i$ is an open and closed subscheme of $Y\times_UU_i$ and by construction $\overline\bG^i$ is isotropic over $Y^i$. Let $v$ be a closed point of $U_i$ such that $\overline\bG^i_v$ is isotropic. Then condition~(*) of Lemma~\ref{lm:isotropic} shows that $\Pic(\A^1_v-Y^i_v)=0$. It remains to apply Theorem~\ref{th:MainThm2} to $\psi^{-1}(Z)\subset\A^1_U$, $\psi^*\widetilde\cE$, and $Y$.
\end{proof}

\subsection{Proof of Theorem~\ref{th:Main2}}\label{Sect:Proof}
Let $X$, $\bG$, $\cE$, $T$ and $Y$ be as in the formulation of Theorem~\ref{th:Main2}. Let $\bx\subset X$ be a finite set of points. We need to show that $\cE$ is trivial in a Zariski neighborhood of $\bx$. By~\cite[Exp.~XXVI, thm.~4.3.2(b)]{SGA3-3} $\bP_-\cap\bP_+$ is a Levi $X$-subgroup scheme of $\bP_\pm$, in particular, it is reductive. Thus, by~\cite[Exp.~XIV, cor.~3.20]{SGA3-3} (see also the footnote) we may replace $X$ with an affine Zariski neighborhood of $\bx$ so that $\bP_-\cap\bP_+$ contains a maximal torus.

Set $R:=\cO_{X,\bx}$. We can apply Proposition~\ref{pr:A1} to obtain a $\bG_R$-torsor $\cE'$ over $\A^1_R$ trivial away from an $R$-finite subscheme and a section $\Delta$ of $\A_R^1\to\Spec R$ such that $\Delta^*\cE'\simeq\cE_R$. By Theorem~\ref{th:A1}, $\cE_R\simeq\Delta^*\cE'$ is trivial. Spreading out, we see that $\cE$ is trivial in a Zariski neighborhood of $\bx$. \qed

\bibliographystyle{../../alphanum}
\bibliography{../../RF}
\end{document}